\title{Ravenel's algebraic extensions of the sphere spectrum do not exist}
\date{June 2015.}
\begin{document}
\begin{abstract}
In this paper we prove a topological nonrealizability theorem: certain classes of graded $BP_*$-modules are shown to never occur as the $BP$-homology of a spectrum. Many of these $BP_*$-modules admit the structure of $BP_*BP$-comodules, meaning that their topological nonrealizability does not follow from earlier results like Landweber's filtration theorem. As a consequence we solve Ravenel's 1983 problem on the existence of ``algebraic extensions of the sphere spectrum'': algebraic extensions of the sphere spectrum do not exist, except in trivial cases.
\end{abstract}
\maketitle
\tableofcontents

\section{Introduction.}

This paper 
solves an open problem posed by D. Ravenel in 1983 on topological realizability of the classifying rings of formal groups with complex multiplication, i.e., ``formal modules.''\footnote{In this paper, all formal groups and formal modules are assumed to be {\em one-dimensional}.} 
Given a commutative ring $A$, a {\em formal $A$-module} is a formal group law $F$ equipped with a ring map $A \stackrel{\rho}{\longrightarrow} \End(F)$ such that $\rho(a)(X) \equiv aX \mod X^2$. There exists a classifying ring $L^A$ of formal $A$-modules, and if $A$ is a discrete valuation ring,
there exists a classifying ring $V^A$ of $A$-typical formal $A$-modules. See~\cite{cmah1} and~\cite{cmah2} for general properties and explicit computations of the rings $L^A$, and~\cite{MR2987372} and~\cite{MR745362} for $V^A$.
The ring $L^A$ is naturally a commutative graded algebra over Lazard's classifying ring $L$ of formal group laws, i.e., the coefficient ring $MU_*$ of complex bordism, by the ring map $L \rightarrow L^A$ classifying the underlying formal group law of the universal formal $A$-module.
For the same reasons, the ring $V^A$ is naturally a commutative graded algebra over the classifying ring $V$ of $p$-typical formal group laws, i.e., the coefficient ring $BP_*$ of $p$-primary Brown-Peterson homology. See~\cite{MR0253350} for this relationship between $L$ and $MU_*$ and between $V$ and $BP_*$.

Now here is a natural question:
\begin{question} \label{realization question} 
\begin{description}
\item[Global realization] Given the ring of integers $A$ in a finite extension $K/\mathbb{Q}$ of degree $>1$, does there exist a spectrum $S^A$ such that $MU_*(S^A) \cong L^A$?
\item[Local realization] Given the ring of integers $A$ in a finite extension $K/\mathbb{Q}_p$ of degree $>1$, does there exist a spectrum $S^A$ such that $MU_*(S^A) \cong L^A$?
\item[$p$-typical local realization] Given the ring of integers $A$ in a finite extension $K/\mathbb{Q}_p$ of degree $>1$, does there exist a spectrum $S^A$ such that $BP_*(S^A) \cong V^A$?
\end{description}
\end{question}
Ravenel asked the ``Local realization'' part of Question~\ref{realization question} in~\cite{MR745362}. The ``Global realization'' and ``$p$-typical local realization'' parts of Question~\ref{realization question} are natural extensions of Ravenel's original question.
(The reason for the restriction to degree $>1$ is that the degree $1$ cases, $K =\mathbb{Q}$ and $K = \mathbb{Q}_p$, are known classically to have positive answers: the relevant spectra are simply the sphere spectrum and its $p$-completion, respectively. See~\cite{MR0253350}.)

Question~\ref{realization question} is a reasonable one because, on the one hand, $L^A$ and $V^A$ are very interesting (number-theoretically quite meaningful) $MU_*$- and $BP_*$-algebras, so one wants to know if they occur as the complex bordism or Brown-Peterson homology of some spectrum;
and on the other hand, Ravenel remarks 
at the end of~\cite{MR745362} that, if such a spectrum $S^A$ exists,
then the cohomology $\Ext_{(L^A,L^AB)}^{*,*}(L^A, L^A)$ of the classifying Hopf algebroid of one-dimensional formal $A$-modules
(in other words, the flat cohomology of the moduli stack of formal $A$-modules) is the $E_2$-term of the Adams-Novikov spectral sequence
computing the stable homotopy groups $\pi_*(S^A)$. It is Ravenel's suggestion that the spectra $S^A$ ought to be thought of as ``algebraic extensions of the sphere spectrum.''

The ``$p$-typical local realization'' part of Question~\ref{realization question}  was taken up by A. Pearlman in his 1986 Ph.D. thesis~\cite{pearlmanthesis}. Pearlman was able to show that, for a 
totally ramified finite extension $K/\mathbb{Q}_p$ with ring of integers $A$, there exists a $BP$-module spectrum $Y$ and an isomorphism $\pi_*(Y) \cong V^A$ of graded $BP_*$-modules; however, Pearlman was not able to resolve the $p$-typical local realization problem, i.e., he did not resolve the question of whether $Y$ splits as $BP\smash S^A$ for some spectrum $S^A$.  
Pearlman also made some computations of the Morava $K$-theories of such a spectrum $S^A$, assuming its existence;
and in the 2007 paper~\cite{MR2262886}, T. Lawson proved that, when $K/\mathbb{Q}_p$ is a finite field extension of degree $>1$ and with ring of integers $A$, there does not exist an $A_p$-ring spectrum (i.e., associative-up-to-$p$th-order-homotopies) $X$ such that $MU_*(X)\cong L^A$ as graded rings.
This is a negative answer to a {\em multiplicative analogue} of the local case of Question~\ref{realization question}.
Lawson's multiplicative argument uses a Kudo-Araki-Dyer-Lashof operation in an essential way (in the case of an $A_p$-ring spectrum, by identifying it with a $p$-fold Massey product),
and so it cannot be reduced to an argument about spectra rather than ring spectra.  Lawson also showed, by a different argument, that there does not
exist a spectrum (no ring structure assumed) $X$ such that $MU_*(X) \cong L^{\hat{\mathbb{Z}}_2[i]}$, where $\hat{\mathbb{Z}}_2[i]$ is the 
ring of integers in the $2$-adic rationals with a primitive fourth root of unity adjoined, but Lawson remarks that his argument
does not seem to generalize (to odd primes, for example).
So Ravenel's original question, Question~\ref{realization question}, has remained open except
in the case $A = \hat{\mathbb{Z}}_2[i]$, where it has a negative answer.

In the present paper we settle all cases, global\footnote{In the global case---but not in the $p$-typical local case or Ravenel's original case, the local case---there is a small caveat: we assume that the extension $K/\mathbb{Q}$ is either cyclic of prime power order, or Galois. This is so that we can find a prime number $p$ which neither ramifies nor splits completely in $\mathcal{O}_K$, and such that every prime of $\mathcal{O}_K$ over $p$ has the same degree.
If one knows, for a given extension $K/\mathbb{Q}$, that such a prime exists, then it is unnecessary to assume that $K/\mathbb{Q}$ is cyclic of prime power order or Galois in order to get a negative answer to the global case of Question~\ref{realization question}.} 
and local and $p$-typical local, of Question~\ref{realization question}. The answer turns out to be: 
\begin{answer}
No.
\end{answer}
In the $p$-typical local case, the method of proof is an analysis of what kinds of $BP_*$-modules can actually occur as the $BP$-homology of a spectrum.
The arguments use some topology, and are not purely algebraic, unlike earlier topological nonrealizability results on $BP_*$-modules. Landweber's filtration theorem~\cite{MR0423332}, for example,
gives a condition that a $BP_*$-module must satisfy in order to be a $BP_*BP$-comodule and hence possibly the $BP$-homology of a spectrum,
but the $BP_*$-module $V^A$ of Question~\ref{realization question} actually {\em is} a $BP_*BP$-comodule, so purely algebraic
(non-topological) criteria like Landweber's filtration theorem do not shed any light on Question~\ref{realization question}. 

The local and global cases then follows from the $p$-typical local case, but not in a totally trivial way: the proof I give for the global case, which is the simplest proof I know, uses the fact that in every finite extension of $\mathbb{Q}$ there are infinitely many primes which do not split completely, which indicates that there is at least a little bit of nontrivial (although well-known) number theory involved.

The proof I give in the $p$-typical local case is really an answer to a more general question, which is as follows: we have the ring map $BP_*\rightarrow V^A$ classifying the underlying $p$-typical formal group law of the universal $A$-typical formal
$A$-module, and via this ring map, every $V^A$-module is also a $BP_*$-module. 
So one can ask what spectra $X$ have the property that $BP_*(X)$ is a $V^A$-module, not just a $BP_*$-module, where again $A$ is the ring of integers
in a finite extension $K/\mathbb{Q}_p$ of degree $>1$.
The answer to this question is given in Theorem~\ref{unramified dissonance thm}: if $K/\mathbb{Q}_p$ is not totally ramified, then the only spectra $X$ such that $BP_*(X)$ is a $V^A$-module are dissonant spectra (``dissonant'' in the sense of~\cite{MR737778}; I give a brief recap of what this means in Remark~\ref{dissonance remark}). Such spectra do indeed exist, for example, $X = H\mathbb{F}_p$; see the comments at the beginning of~\cref{non-tot-ram section}.
Consequently the only $V^A$-modules which occur as the $BP$-homology of a spectrum are $V^A$-modules
on which all of the Hazewinkel generators $v_n$ of $BP_*$ act nilpotently, i.e., for each element $m\in M$, there exists $i\in\mathbb{N}$ such that $v_n^im = 0$. If $K/\mathbb{Q}_p$ is totally ramified, then the situation is a little more forgiving: in Theorem~\ref{main thm on tot ram nonexistence} we find that the only spectra $X$ such that $BP_*(X)$ is a $V^A$-module are extensions of rational spectra by dissonant spectra. 

In the case that $X$ {\em is} a spectrum whose $BP$-homology admits the structure of a $V^A$-module,
then, at least when $K/\mathbb{Q}_p$ is totally ramified, there exists a change-of-rings isomorphism
describing the $E_2$-term of the Adams-Novikov spectral sequence computing $\pi_*(X)_{(p)}$ in terms of the classifying Hopf algebroid
$(V^A,V^AT)$ of formal $A$-modules: 
\begin{equation}\label{change-of-rings iso 100} \Ext_{(BP_*,BP_*BP)}^{*,*}(BP_*,BP_*(X)) \cong \Ext_{(V^A,V^AT)}^{*,*}(V^A,BP_*(X)),\end{equation}
as long as $BP_*(X)$ is flat over $BP_*$.
This is proven in~\cite{cmah1}. In the case $BP_*(X) = V^A$, the existence of such a change-of-rings isomorphism 
seems to have been known to Ravenel (who says it follows from ``standard arguments'' at the end of~\cite{MR745362}), 
and to Pearlman (who gives a sketch of a proof in~\cite{pearlmanthesis}). However, in the present paper it is proven that there is 
no spectrum $X$ such that $BP_*(X) \cong V^A$ unless $K = \mathbb{Q}_p$, rendering that special case moot. 
But actually writing out the ``standard arguments,'' as is done in~\cite{cmah1}, leads to the more general isomorphism~\ref{change-of-rings iso 100},
which does indeed apply in some cases, e.g. $X = H\mathbb{Q}_p^{\vee d}$ where $d = [K : \mathbb{Q}_p]$, when $K/\mathbb{Q}_p$ is totally ramified.

The conclusion here 
(with the understanding that, in the global case, we assume that the extension $K/\mathbb{Q}$ is either cyclic of prime power order, or Galois) is Corollary~\ref{main conclusion cor}: 
\begin{conclusion} Except in the trivial cases where $A = \mathbb{Z}$ or $A = \hat{\mathbb{Z}}_p$, Ravenel's algebraic extensions of the sphere spectrum do not exist.
\end{conclusion}

This paper ends with a short appendix on the most basic ideas from local cohomology, for the reader who does not already know them. These ideas are easy, relevant, and useful, and I make use of them starting with the proof of Theorem~\ref{main thm on tot ram nonexistence}.

\begin{remark}
I want to take a moment, however, to defend the use of formal modules in homotopy theory: despite the nonexistence of Ravenel's algebraic spheres, formal modules are very useful for a homotopy theorist! 
There exist families of ``height-shifting'' spectral sequences which, generally speaking, take as input
a height $d$ formal $A$-module computation  and output a height $n$ formal group law computation, where $A$ is the ring of integers in a field extension $K/\mathbb{Q}_p$, and $[K: \mathbb{Q}_p]\cdot d = n$.
There also exist families of spectral sequences which take as input
a height $d$ formal group computation  and output a height $d$ formal $A$-module computation. 
 See~\cite{height4} for this material, where both methods are used in order to compute the cohomology of the height $4$ Morava stabilizer group at large primes, using the cohomology of the height $2$ Morava stabilizer group as input.
In fact, 
it is the most {\em useful} properties of formal modules---that the underlying formal group of an $A$-height $n$ formal $A$-module
has $p$-height $n\cdot [K: \mathbb{Q}_p]$, i.e., the map of moduli stacks $\mathcal{M}_{fmA} \rightarrow \mathcal{M}_{fg}$ sends the height $n$ stratum to the height $n\cdot [K: \mathbb{Q}_p]$ stratum, allowing ``height-shifting'' techniques to be used in the computation of the flat cohomology of $\mathcal{M}_{fmA}$ and its various substacks---which are responsible for the map $MU_* \cong L \rightarrow L^A$ having the algebraic properties which result in the nonexistence results proven in the present paper. So while we do not have the certain kind of topological usefulness of formal modules (as in Question~\ref{realization question}) which Ravenel wanted, we instead have other kinds of topological usefulness of formal modules.
\end{remark}

I am grateful to E. Friedman for suggesting to me that it is possible to remove the assumption that $K/\mathbb{Q}$ is Galois from my original proof that ``global'' algebraic extensions of the sphere do not exist. I am especially grateful to D. Ravenel for teaching me a great deal about homotopy theory and formal groups over the years. 

\begin{conventions}
\begin{enumerate}
\item Whenever I refer to $v_n$ or $BP$ or $E(n)$, I am implicitly choosing a prime number $p$, and referring to the $p$-primary $v_n$ or $BP$ or $E(n)$. Unless otherwise specified, $v_1, v_2, \dots$ will be Hazewinkel's generators for $BP_*$, rather than Araki's.
\item When $(A,\Gamma)$ is a graded Hopf algebroid, $M,N$ are graded $\Gamma$-comodules, and I write $\Ext^{s,t}_{(A,\Gamma)}(M,N)$, I am referring to the $s$th relative right-derived functor of $\hom_{\gr\Comod(\Gamma)}(\Sigma^tM, -)$ applied to $N$; here $\gr\Comod(\Gamma)$ is the category of graded $\Gamma$-comodules, and the allowable class (in the sense of relative homological algebra, as in~\cite{MR0178036} or chapter~IX of~\cite{MR1344215}) in question is the class of extensions of graded $\Gamma$-comodules whose underlying extensions of graded $A$-modules are split. Appendix~1 of~\cite{MR860042} is the standard reference for $\Ext$ in categories of comodules relative to this allowable class.
\item When $R$ is a commutative ring, $r\in R$, and $M$ is an $R$-module, I will write $r^{-1}M$ for the localization of $M$ inverting the principal ideal $(r)$, i.e., $r^{-1}M \cong M \otimes_R R[\frac{1}{r}]$.
\end{enumerate}
\end{conventions}

\section{The map $BP_*\rightarrow V^A$.}

Recall (from~\cite{MR2987372} or from~\cite{MR745362}) that M. Hazewinkel proved that, if $A$ is a discrete valuation ring with finite residue field, then the classifying ring $V^A$ of $A$-typical formal $A$-modules is isomorphic to $A[v_1^A, v_2^A, \dots]$, a polynomial algebra over $A$ on countably infinitely many generators.
Now suppose we have an extension $L/K$ of $p$-adic number fields, 
and suppose that $A,B$
are the rings of integers in $K,L$, respectively. Then 
there are induced maps of classifying rings 
\begin{align*} 
V^{A} & \stackrel{\gamma}{\longrightarrow} V^{B} \mbox{\ \ \ and} \\
V^{A}\otimes_A B & \stackrel{\gamma^{\sharp}}{\longrightarrow} V^{B}\end{align*}
given by classifying the ${{A}}$-typical formal ${{A}}$-module law underlying the universal ${{B}}$-typical
formal ${{B}}$-module law on $V^{{B}}$. 
One naturally wants to know whether $\gamma^{\sharp}$ is injective, surjective, or has other recognizable properties
(this is necessary in order to prove the topological nonrealizability theorems in~\cref{non-tot-ram section} and~\cref{tot ram case}, for example).

In this section, the two main results are:
\begin{itemize}
\item
Corollary~\ref{structure of vb over va}: if $L/K$ is unramified,
then $\gamma^{\sharp}$ is surjective, and we compute $V^B$ 
as a quotient of $V^A\otimes_A B$. (That particular computation is not really new; the case $K = \mathbb{Q}_p$ appeared as Corollary~2.7 in~\cite{MR745362}, and the proof for general $K$ is essentially the same.)
\item 
Theorem~\ref{comparison map is a rational iso}:
if $L/K$ is totally ramified, then $\gamma^{\sharp}$ is a rational isomorphism. 
That is, $\gamma^{\sharp}$ is injective, and $\gamma^{\sharp}$ is an isomorphism after inverting the prime $p$. (This result, on the other hand, is new.)
\end{itemize}

\begin{remark}\label{remark on log coefficients}
In this section we make heavy use of Hazewinkel's generators $v_n^A$ for $V^A$. They are defined as follows: let 
$A$ be a discrete valuation ring of characteristic zero and with finite residue field, let
$q$ denote the cardinality of the residue field of $A$, let $\pi$ denote a uniformizer for $A$, and let
\[ \log_{F^A}(X) = \sum_{n\geq 1} \ell_n^A X^{q^n}\]
be the logarithm of the universal $A$-typical formal $A$-module $F^A$. Then the logarithm coefficients $\ell_n^A \in \mathbb{Q}\otimes_{\mathbb{Z}} A$
satisfy the equations
\begin{align}
\nonumber \ell_0^A &= 1,\\
\label{hazewinkel formula}  \pi\ell_n^A &= \sum_{i=0}^{n-1} \ell^A_i(v_{n-i}^A)^{q^i} 
\end{align}
for some set of parameters $v_0^A = \pi, v_1^A, v_2^A, \dots$,
and these are the Hazewinkel generators for $V^A$. This material appears in~21.5.4 of~\cite{MR506881}.
It follows from Hazewinkel's construction of the universal $A$-typical formal $A$-module, using Hazewinkel's functional equation lemma, that
the map $V^A\stackrel{\gamma}{\longrightarrow} V^B$ is ``natural in the logarithm,'' that is, $\gamma$ sends the coefficient of the degree $n$ term in $\log_{F_A}$ to the coefficient of the degree $n$ term of $\log_{F_B}$, for all $n$. (It is really $\mathbb{Q}\otimes_{\mathbb{Z}} \gamma: \mathbb{Q}\otimes_{\mathbb{Z}} V^A \rightarrow \mathbb{Q}\otimes_{\mathbb{Z}} V^B$ that can be evaluated on the coefficients in the power series $\log_{F_A}$, not $\gamma$ itself, since the logarithm coefficients involve denominators; when convenient I will write $\gamma(\ell_n^A)$ as shorthand for $(\mathbb{Q}\otimes_{\mathbb{Z}}\gamma)(\ell_n^A)$.)
Basically all of the arguments in the rest of this section consist in then solving the resulting equations to see what $\gamma$ does to the Hazewinkel generators $v_0^A,v_1^A,v_2^A, \dots$.
\end{remark}

The case $K = \mathbb{Q}_p$ of Proposition~\ref{computationofunramifiedgamma} 
also appears as Corollary~2.7 in~\cite{MR745362}.
\begin{prop}\label{computationofunramifiedgamma} 
Let $K,L$ be $p$-adic number fields, and let $L/K$ be 
unramified of degree $f$. Let $A,B$ denote the rings of integers of $K$ and $L$, respectively.
Using the Hazewinkel generators for $V^{{A}}$ and $V^{{B}}$, the
map $V^{{A}}\stackrel{\gamma}{\longrightarrow}V^{{B}}$ is determined by
\[ \gamma(v^{{A}}_i) = \left\{ \begin{array}{ll} v^{{B}}_{i/f} & {\rm if\ } f\mid i\\
0 & {\rm if\ } f\nmid i\end{array} \right. .\]\end{prop}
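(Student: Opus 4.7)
The plan is to argue by induction on $i$, using the Hazewinkel recurrence \eqref{hazewinkel formula} and the fact recalled in Remark~\ref{remark on log coefficients} that $\gamma$ is natural in the logarithm. Since $L/K$ is unramified of degree $f$, the uniformizer $\pi$ of $A$ is still a uniformizer of $B$, and the residue field of $B$ has cardinality $q^f$. Thus Hazewinkel's logarithm for $F^B$ has the shape
\[ \log_{F^B}(X) = \sum_{m \geq 0} \ell_m^B X^{q^{fm}}, \]
supported only in degrees of the form $q^{fm}$. Comparing degree by degree with $\log_{F^A}(X) = \sum_{n\geq 0} \ell_n^A X^{q^n}$ and using that $\gamma$ preserves the coefficient of $X^k$ for each $k$, I get
\[ \gamma(\ell_n^A) = \begin{cases} \ell_{n/f}^B & \text{if } f \mid n, \\ 0 & \text{if } f \nmid n. \end{cases} \]

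Now I induct on $n$. The base case $n=0$ is immediate, since $v_0^A = \pi = v_0^B$. For the inductive step, apply $\gamma$ to Hazewinkel's recurrence for $V^A$ to obtain
\[ \pi\,\gamma(\ell_n^A) = \sum_{i=0}^{n-1} \gamma(\ell_i^A)\,\gamma(v_{n-i}^A)^{q^i}. \]
Every summand with $f \nmid i$ drops out because $\gamma(\ell_i^A)=0$. Suppose first that $f \nmid n$: then for every $i \geq 1$ with $f \mid i$ we have $f \nmid (n-i)$, so the inductive hypothesis gives $\gamma(v_{n-i}^A)=0$; the $i=0$ term is $\gamma(v_n^A)$, and the left-hand side is $0$, forcing $\gamma(v_n^A)=0$. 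Suppose instead that $f \mid n$, say $n = fm$, and reindex the surviving terms by $i = fj$ with $0 \leq j \leq m-1$; applying the inductive hypothesis to $\gamma(v_{f(m-j)}^A) = v_{m-j}^B$ for $j \geq 1$, the equation rearranges to
\[ \pi \ell_m^B = \gamma(v_{fm}^A) + \sum_{j=1}^{m-1} \ell_j^B\,(v_{m-j}^B)^{(q^f)^j}. \]
Comparing with Hazewinkel's recurrence for $V^B$ (with $q$ replaced by $q^f$) produces the desired identity $\gamma(v_{fm}^A) = v_m^B$.

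The only step that requires genuine care is the identification $\gamma(\ell_n^A) = \ell_{n/f}^B$ or $0$ together with the observation that $v_0^B = \pi$, so that Hazewinkel's recurrence for $V^B$ really does take the expected shape with $q^f$ in place of $q$. Both facts rest squarely on the extension $L/K$ being unramified; once they are in place the induction is routine bookkeeping with the recurrence.
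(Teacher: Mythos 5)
Your proof is correct and takes essentially the same approach as the paper: identify $\gamma(\ell_n^A)$ using naturality in the logarithm and the fact that an unramified extension shares a uniformizer and has residue cardinality $q^f$, then solve Hazewinkel's recurrence by induction. The only cosmetic difference is organizational: you run a single induction on $n$ with an internal case split on whether $f \mid n$, whereas the paper interleaves two inductive steps (first $\gamma(v^A_{hf}) = v^B_h$, then $\gamma(v^A_{hf+j}) = 0$ for $0 < j < f$); your unified induction is a touch cleaner but the substance is identical.
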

\begin{proof} 
Let $\pi$ be a uniformizer for $A$.
Since $L/K$ is unramified, $\pi$ is also a uniformizer for $B$.
Let $q$ be the cardinality of the residue field of $A$.
We immediately have that $v_0^A = \pi = v_0^B$, so $\gamma(v_0^A) = v_0^B$.

Now, since the residue field of $B$ is $\mathbb{F}_{q^f}$, the
logarithm $\log_{F^B}(X) = \sum_{n\geq 1} \ell_{n}^B X^{q^{fn}}$
has no nonzero terms $X^{q^i}$ for $f\nmid i$.
Hence $\gamma(\ell^{A}_i) = 0$ if $f\nmid i$.
So we have that 
$\gamma(v_i^{{A}}) = 0$ for $0<i<f$, and 
\begin{eqnarray*} \frac{1}{\pi}v_1^{{B}} & = & \ell_1^{{B}}\\
& = & \gamma(\ell_f^{{A}}) \\
& = & \gamma\left(\frac{1}{\pi}\sum_{i=0}^{f-1}\ell_i^{{A}}(v^{{A}}_{f-i})^{q^i}\right)\\
& = & \frac{1}{\pi}\gamma(v^{{A}}_f),\end{eqnarray*}
so $\gamma(v^{{A}}_f) = v_1^{{B}}$.

We proceed by induction. Suppose that $\gamma(v^{{A}}_i) = 0$ if $f\nmid i$ and $0<i<hf$, and that
$\gamma(v^{{A}}_i) = v^{{B}}_{i/f}$ if $f\mid i$ and $i< hf$.
Then 
\begin{align*}
 v_h^{{B}}+\sum_{i=1}^{h-1}\ell^{{B}}_i(v_{h-i}^{{B}})^{q^{if}}
  &= \ell^{{B}}_h \\
  &= \gamma(\ell^{{A}}_{hf}) \\
  &= \gamma\left(\sum_{i=0}^{hf-1} \ell^{{A}}_i(v_{hf-i}^{{A}})^{q^i}\right)\\
  &=  \sum_{i=0}^{h-1}\ell^{{B}}_i\gamma(v_{(h-i)f}^{{A}})^{q^{if}}\\ 
  &= \gamma(v_{hf}^{{A}})+ \sum_{i=1}^{h-1}\ell^{{B}}_i (v_{h-i}^{{B}})^{q^{if}},
\end{align*}
so $\gamma(v^{{A}}_{hf}) = v^{{B}}_h$.

Now suppose that $0<j<f$ and $\gamma(v^{{A}}_i) = 0$ if $f\nmid i$ and $i<hf+j$, and suppose that
$\gamma(v^{{A}}_i) = v^{{B}}_{i/f}$ if $f\mid i$ and $i< hf$. We want to show that $\gamma(v^{{A}}_{hf+j}) = 0$.
We see that \begin{eqnarray*} 0 & = & \gamma(\ell^{{A}}_{hf+j}) \\ & = & \gamma\left(\sum_{i=0}^{hf+j-1}\ell^{{A}}_i(v^{{A}}_{hf+j-i})^{q^i}\right)\\
& = & \sum_{i=0}^h\ell^{{B}}_i\gamma(v^{{A}}_{(h-i)f+j})^{q^{if}}\\
& = & \gamma(v^{{A}}_{hf+j}).\end{eqnarray*}
This concludes the induction.\end{proof}
\begin{corollary} \label{structure of vb over va} 
Let $K,L$ be $p$-adic number fields with rings of integers $A,B$, respectively,
and let $L/K$ be unramified of degree $f$. Then 
\[ V^{{B}}\cong 
(V^{{A}}\otimes_{{A}}{{B}})/\left(\{v^{{A}}_i: f\nmid i\}\right).\]
\end{corollary}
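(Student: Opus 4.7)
The plan is to leverage Proposition~\ref{computationofunramifiedgamma} together with Hazewinkel's theorem (recalled at the start of the section) that $V^A$ and $V^B$ are polynomial algebras on the Hazewinkel generators. Since $V^A\cong A[v_1^A, v_2^A,\dots]$, extension of scalars along $A\rightarrow B$ gives an identification $V^A\otimes_A B\cong B[v_1^A, v_2^A,\dots]$, and Hazewinkel also gives $V^B\cong B[v_1^B,v_2^B,\dots]$. The map $\gamma^{\sharp}:V^A\otimes_A B\rightarrow V^B$ is the $B$-algebra map determined by its action on these polynomial generators.

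First I would invoke Proposition~\ref{computationofunramifiedgamma} to read off that action: $\gamma^{\sharp}(v^A_i)=v^B_{i/f}$ when $f\mid i$, and $\gamma^{\sharp}(v^A_i)=0$ when $f\nmid i$. Consequently the ideal $I=(\{v_i^A:f\nmid i\})\subset V^A\otimes_A B$ is contained in $\ker\gamma^{\sharp}$, so $\gamma^{\sharp}$ factors through the quotient
\[
(V^A\otimes_A B)/I\;\cong\;B[v_f^A,v_{2f}^A,v_{3f}^A,\dots].
\]

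Next I would observe that the induced $B$-algebra map $B[v_f^A,v_{2f}^A,\dots]\rightarrow B[v_1^B,v_2^B,\dots]$ sends $v^A_{nf}\mapsto v^B_n$, hence is a bijection on polynomial generators between two polynomial $B$-algebras on countably many generators; it is therefore an isomorphism. This simultaneously proves that $\gamma^{\sharp}$ is surjective and that its kernel is exactly $I$, establishing the stated presentation of $V^B$.

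There is no real obstacle here: once the polynomial structure of $V^A$ and $V^B$ is in hand and Proposition~\ref{computationofunramifiedgamma} has told us how $\gamma^{\sharp}$ treats the Hazewinkel generators, the corollary follows by pure bookkeeping. The only point to be a little careful about is that one is free to identify $V^A\otimes_A B$ with $B[v_1^A,v_2^A,\dots]$ because Hazewinkel's $V^A\cong A[v_1^A,v_2^A,\dots]$ is an isomorphism of $A$-algebras, so extension of scalars commutes with the polynomial algebra construction.
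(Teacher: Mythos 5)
Your argument is correct and matches the intent of the paper, which states this corollary as a direct consequence of Proposition~\ref{computationofunramifiedgamma} with no further proof. Reading off the action of $\gamma^{\sharp}$ on the Hazewinkel generators and observing it induces a bijection on polynomial generators between the two polynomial $B$-algebras is exactly the bookkeeping intended.
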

When $L/K$ is ramified, the map $\gamma$ is much more complicated. 
One can always use the above formulas to solve the
equation $\gamma(\ell_i^A) = \ell_i^B$
to get a formula for $\gamma(v_i^A)$, as long as one knows 
$\gamma(v_j^A)$ for all $j<i$; but it seems that there is no simple closed form
for $\gamma(v_i^A)$ for general $i$. Here are formulas for low $i$:
\begin{prop} \label{low degree gamma computation}
Let $K,L$ be 
$p$-adic number fields with rings of integers $A$ and $B$,
and let $L/K$ be 
totally ramified.
Let $A$ (and consequently also $B$) have residue field $\mathbb{F}_q$.
Let $\pi_A,\pi_B$ be uniformizers for $A,B$, respectively.
Let $\gamma$ be the ring homomorphism 
$V^A\stackrel{\gamma}{\longrightarrow} V^B$ classifying the underlying formal $A$-module law of the 
universal formal $B$-module law. 
Then
\begin{eqnarray*}
\gamma(v_1^A) & = & \frac{\pi_A}{\pi_B}v_1^B, \\
\gamma(v_2^A) & = & \frac{\pi_A}{\pi_B}v_2^B + 
\left(\frac{\pi_A}{\pi_B^2}-\frac{\pi_A^q}{\pi_B^{q+1}}\right)(v_1^B)^{q+1}.
\end{eqnarray*}
\end{prop}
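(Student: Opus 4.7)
The plan is to exploit Remark~\ref{remark on log coefficients}: since $L/K$ is totally ramified, both $A$ and $B$ have the same residue field $\mathbb{F}_q$, so the same $q$ appears in Hazewinkel's formula~(\ref{hazewinkel formula}) for $V^A$ and for $V^B$, and the map $\mathbb{Q}\otimes\gamma$ satisfies $\gamma(\ell_n^A) = \ell_n^B$ for all $n$. The computation is then just a matter of solving the resulting equations in $\mathbb{Q}\otimes_{\mathbb{Z}} V^B$ for $\gamma(v_1^A)$ and $\gamma(v_2^A)$.

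Explicitly, I would begin by applying (\ref{hazewinkel formula}) with $n=1$ on both sides to get
\[ \ell_1^A = \tfrac{1}{\pi_A}\,v_1^A, \qquad \ell_1^B = \tfrac{1}{\pi_B}\,v_1^B, \]
and then apply $\gamma$ to the first equation. Since $\gamma(\pi_A) = \pi_A$ (as $\pi_A \in A \subset B$ and $v_0^A = \pi_A$ maps to the underlying uniformizer $\pi_A$ of $B$ coming from $A$) and $\gamma(\ell_1^A) = \ell_1^B$, solving yields $\gamma(v_1^A) = (\pi_A/\pi_B)\, v_1^B$, which is the first displayed formula.

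Next, I would apply (\ref{hazewinkel formula}) with $n=2$ to get
\[ \ell_2^A = \tfrac{1}{\pi_A}\,v_2^A + \tfrac{1}{\pi_A^2}\,(v_1^A)^{q+1}, \qquad \ell_2^B = \tfrac{1}{\pi_B}\,v_2^B + \tfrac{1}{\pi_B^2}\,(v_1^B)^{q+1}. \]
Applying $\gamma$ to the first, substituting the already-known value of $\gamma(v_1^A)$, and setting the result equal to $\ell_2^B$ gives
\[ \tfrac{1}{\pi_A}\,\gamma(v_2^A) + \tfrac{\pi_A^{q-1}}{\pi_B^{q+1}}\,(v_1^B)^{q+1} = \tfrac{1}{\pi_B}\,v_2^B + \tfrac{1}{\pi_B^2}\,(v_1^B)^{q+1}. \]
Multiplying through by $\pi_A$ and rearranging produces the stated formula for $\gamma(v_2^A)$.

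There is no real obstacle here beyond bookkeeping; the only subtlety worth flagging is that the equations are solved in $\mathbb{Q}\otimes_{\mathbb{Z}} V^B$ (where the $\ell_n$'s live), but the resulting coefficients $\pi_A/\pi_B$ and $\pi_A/\pi_B^2 - \pi_A^q/\pi_B^{q+1}$ lie in $B$ because $\pi_A = u\,\pi_B^{e}$ for a unit $u\in B^\times$ and $e = [L:K]$, so a priori denominators disappear. This also confirms that $\gamma$ itself (not merely $\mathbb{Q}\otimes\gamma$) takes the stated values in $V^B$.
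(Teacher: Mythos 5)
Your proposal is correct and takes the same approach the paper does: the paper's proof simply says the formulas follow by setting $\gamma(\ell_i^A) = \ell_i^B$ and solving via formula~\ref{hazewinkel formula}, which is exactly what you have carried out (and your arithmetic checks out, including the observation that $\pi_A/\pi_B$ and $\pi_A/\pi_B^2 - \pi_A^q/\pi_B^{q+1}$ lie in $B$).
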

\begin{proof} 
These formulas follow immediately from setting $\gamma(\ell_i^A) = \ell_i^B$ and solving, using formula~\ref{hazewinkel formula}.\end{proof}

\begin{prop}\label{rational surjectivity}
 Let $K,L$ be $p$-adic number fields with rings of
integers $A,B$, respectively, and let $L/K$ be totally ramified.
Write $\gamma^{\sharp}$ for the ring map $V^A\otimes_A B\rightarrow V^B$
classifying the underlying $A$-typical formal $A$-module of the universal
$B$-typical formal $B$-module.
Then $\gamma^{\sharp}$ is rationally surjective, that is, the map
\[ V^{{A}}\otimes_A B \otimes_{\mathbb{Z}}\mathbb{Q}
\stackrel{\gamma^{\sharp}\otimes_{\mathbb{Z}}\mathbb{Q}}{\longrightarrow}
V^{{B}}\otimes_{\mathbb{Z}}\mathbb{Q}\] 
is surjective.\end{prop}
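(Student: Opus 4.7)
The plan is to exploit the fact that, rationally, the logarithm coefficients furnish an alternative set of polynomial generators for $V^A$ and $V^B$, and that the map $\gamma$ acts very transparently on these coefficients.

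First I would observe that, since $L/K$ is totally ramified, the residue fields of $A$ and $B$ coincide, so the cardinality $q$ appearing in Hazewinkel's formula~(\ref{hazewinkel formula}) is the same for both rings. The logarithms therefore have the shape $\log_{F^A}(X) = \sum_{n\geq 0} \ell_n^A X^{q^n}$ and $\log_{F^B}(X) = \sum_{n\geq 0} \ell_n^B X^{q^n}$ with the same exponents $q^n$. By the naturality of $\gamma$ in the logarithm (Remark~\ref{remark on log coefficients}), this gives $\gamma(\ell_n^A) = \ell_n^B$ in $\mathbb{Q}\otimes_{\mathbb{Z}} V^B$ for every $n$, and hence $\gamma^{\sharp}(\ell_n^A\otimes 1) = \ell_n^B$.

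Next I would establish that $\mathbb{Q}\otimes_{\mathbb{Z}} V^A$ is a polynomial algebra over $\mathbb{Q}\otimes_{\mathbb{Z}} A$ on the logarithm coefficients $\ell_1^A, \ell_2^A, \dots$ (and similarly for $B$). Rewriting Hazewinkel's formula as
\[ \ell_n^A = \frac{1}{\pi_A}v_n^A + \frac{1}{\pi_A}\sum_{i=1}^{n-1}\ell_i^A(v_{n-i}^A)^{q^i}, \]
an immediate induction on $n$ shows that each $v_n^A$ is a polynomial in $\ell_1^A,\dots,\ell_n^A$ over $\mathbb{Q}\otimes_{\mathbb{Z}} A$, so the two generating sets are related by a (rational) triangular change of variables. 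Therefore $V^A\otimes_A B\otimes_{\mathbb{Z}}\mathbb{Q}$ and $V^B\otimes_{\mathbb{Z}}\mathbb{Q}$ are both polynomial $(\mathbb{Q}\otimes_{\mathbb{Z}} B)$-algebras on countable generating sets, and $\gamma^{\sharp}\otimes_{\mathbb{Z}}\mathbb{Q}$ identifies the generating set $\{\ell_n^A\otimes 1\}$ bijectively with $\{\ell_n^B\}$.

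That makes $\gamma^{\sharp}\otimes_{\mathbb{Z}}\mathbb{Q}$ an isomorphism of polynomial rings, which is strictly stronger than the stated rational surjectivity and foreshadows Theorem~\ref{comparison map is a rational iso}. I do not anticipate any genuine obstacle: the only point requiring care is that one must use the total ramification hypothesis to know that $A$ and $B$ share a residue field, so that the logarithms are indexed by the same exponents $q^n$; without this, $\gamma$ would send many $\ell_i^A$ to zero, as in Proposition~\ref{computationofunramifiedgamma}, and the argument would collapse.
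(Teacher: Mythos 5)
Your argument is correct and in fact proves something strictly stronger than what the proposition asks for. The paper's own proof proceeds by an induction on the index $j$: it uses $\gamma^{\sharp}(\ell_j^A) = \ell_j^B$ and Hazewinkel's recursion to show that $(\gamma^{\sharp}\otimes_{\mathbb{Z}}\mathbb{Q})(\pi_A^{-1}v_j^A) \equiv \pi_B^{-1}v_j^B$ modulo $(v_1^B,\dots,v_{j-1}^B)$, so each Hazewinkel generator of $V^B$ is, inductively, in the image rationally. You instead reorganize the same two ingredients (logarithm naturality and the triangular Hazewinkel recursion) into a change-of-variables argument: rationally the logarithm coefficients form an alternative polynomial generating set for both $V^A \otimes_A B$ and $V^B$ over $L$, and $\gamma^{\sharp}\otimes\mathbb{Q}$ is literally the identification $\ell_n^A\otimes 1 \mapsto \ell_n^B$ of generating sets, hence an isomorphism. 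This is tidier, avoids the slightly awkward bookkeeping of working ``modulo lower generators,'' and directly yields the rational-isomorphism half of Theorem~\ref{comparison map is a rational iso} without passing through the separate surjectivity and injectivity steps. (Note that your argument does \emph{not} replace Proposition~\ref{comparison map is injective}, which establishes \emph{integral} injectivity of $\gamma^{\sharp}$ and requires the monomial-ordering argument; rational injectivity alone, which you do get for free, is weaker.) Your closing remark correctly identifies why total ramification is indispensable: it guarantees that $A$ and $B$ share a residue field, so the two logarithms run over the same powers $q^n$ and $\gamma(\ell_n^A) = \ell_n^B$ rather than $\gamma(\ell_n^A)=0$ for most $n$.
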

\begin{proof} 
Let $\pi_A$ be a uniformizer for $A$, and let $\pi_B$ be a uniformizer for $B$.
Given any $v^{{B}}_i$, we want to produce some element of $V^{{A}}\otimes_A B\otimes_{\mathbb{Z}}\mathbb{Q}\cong V^A \otimes_A L$ which maps to it. We 
begin with $i=1$. 
By Proposition~\ref{low degree gamma computation}, we have $(\gamma^{\sharp}\otimes_{\mathbb{Z}}\mathbb{Q})(\frac{\pi_B}{ \pi_A}v_1^{{A}}) = v_1^{{B}}$. 

Now we proceed by induction. Suppose that we have shown that there is an element in $(v_1^{{A}},\dots ,v_{j-1}^{{A}})\subseteq V^{{A}}\otimes_{\mathbb{A}}\mathbb{L}$ which 
maps via $(\gamma^{\sharp}\otimes_{\mathbb{Z}}\mathbb{Q})$ to $v_{j-1}^{{B}}$. We have
$(\gamma^{\sharp}\otimes_{\mathbb{Z}}\mathbb{Q})(\ell_j^A) = \ell_j^B$, i.e.,
\begin{equation}\label{hazewinkel log coeff eq} (\gamma^{\sharp}\otimes_{\mathbb{Z}}\mathbb{Q})\left(\pi_A^{-1}\sum_{i=0}^{j-1}\ell_i^{{A}}(v_{j-i}^{{A}})^{q^i}\right) = \pi_B^{-1} \sum_{i=0}^{j-1}\ell_i^{{B}}(v_{j-i}^{{B}})^{q^i}\end{equation}
and hence
\begin{eqnarray}\nonumber(\gamma^{\sharp}\otimes_{\mathbb{Z}}\mathbb{Q})(\pi_A^{-1}v_j^{{A}}) & = & 
\pi_B^{-1}\sum_{i=0}^{j-1}\ell_i^{{B}}(v_{j-i}^{{B}})^{q^i}-(\gamma^{\sharp}\otimes_{\mathbb{Z}}\mathbb{Q})(\pi_A^{-1}\sum_{i=1}^{j-1}\ell_i^{{A}}(v_{j-i}^{{A}})^{q^i})\\
\label{totramgammamodideal} & \equiv & \pi_B^{-1}v_j^{{B}}\mod (v_1^{{B}},v_2^{{B}},\dots ,v_{j-1}^{{B}})\subseteq V^B\otimes_{\mathbb{Z}}\mathbb{Q}.\end{eqnarray}
So $(\gamma^{\sharp}\otimes_{\mathbb{Z}}\mathbb{Q})(\frac{\pi_B}{\pi_A}v_j^{{A}}) \equiv v_j^{{B}}$ modulo terms hit by elements in the ideal generated by Hazewinkel generators of lower 
degree.
This completes the induction.\end{proof}

For the proof of the next proposition we will use a monomial ordering on $V^A$. 
\begin{definition}\label{vaordering} We put the following ordering on the Hazewinkel generators of $V^A$:
\begin{eqnarray*} v_i^A\leq v_j^A & \mbox{iff}& i\leq j\end{eqnarray*}
and we put the lexicographic order on the monomials of $V^A$.
This ordering on the monomials in $V^A$ is a total ordering (see e.g. \cite{MR2122859} for these kinds of methods).

We extend this ordering to an ordering on all of $V^A$ by letting $x\leq y$ if and only if the maximal (in the given total ordering on monomials) monomial
in $x$ is less than or equal to the maximal monomial in $y$.
\end{definition}
So, for example, $v_3^A$ is ``greater than'' every monomial in the generators $v_1^A$ and $v_2^A$;
the monomial $(v_2^A)^2$ is ``greater than'' $(v_1^A)^nv_2^A$ for all $n$; and so on.

\begin{lemma}\label{gamma preserves ordering}
Let $K,L$ be finite extensions of $\mathbb{Q}_p$ with rings of integers $A,B$, respectively, and let $L/K$ be a totally ramified extension.
Let $x,y$ be monomials in $V^A$. Then $x\leq y$ implies that $\gamma^{\sharp}(x)\leq \gamma^{\sharp}(y)$.
\end{lemma}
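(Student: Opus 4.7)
The plan is to show that for each $j \geq 1$, the leading monomial of $\gamma^{\sharp}(v_j^A) \in V^B$ (in the ordering of Definition~\ref{vaordering}) is precisely $v_j^B$, and then to invoke the fact that leading terms multiply in a lexicographic monomial order over a domain. These two ingredients will give that the leading monomial of $\gamma^{\sharp}(\prod_i (v_i^A)^{a_i})$ is $\prod_i (v_i^B)^{a_i}$, after which the desired inequality $\gamma^{\sharp}(x) \leq \gamma^{\sharp}(y)$ follows formally from $x \leq y$, because the orderings on monomials of $V^A$ and of $V^B$ are defined by the identical exponent-sequence recipe.

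The key input is the base case
\[ \gamma^{\sharp}(v_j^A) = \tfrac{\pi_A}{\pi_B}\, v_j^B + (\text{terms in } v_1^B,\dots,v_{j-1}^B). \]
The congruence $\gamma^{\sharp}(v_j^A) \equiv \tfrac{\pi_A}{\pi_B} v_j^B$ modulo $(v_1^B,\dots,v_{j-1}^B)$ in $V^B\otimes_{\mathbb{Z}}\mathbb{Q}$ was essentially extracted already in equation~\ref{totramgammamodideal} during the proof of Proposition~\ref{rational surjectivity}. To promote it to the stated equation in $V^B$ itself, I would use that $\gamma^{\sharp}$ is a graded ring map and that $|v_j^A| = |v_j^B| = 2(q^j-1)$: grading forces $\gamma^{\sharp}(v_j^A)$ to be a $B$-linear combination of monomials of degree $2(q^j-1)$, so no $v_k^B$ with $k > j$ can occur and $v_j^B$ can occur only to the first power. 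The coefficient of $v_j^B$ is therefore exactly $\pi_A/\pi_B$, a nonzero element of $L$, and since $v_j^B$ is strictly larger than every monomial supported on $v_1^B,\dots,v_{j-1}^B$, it is genuinely the leading monomial.

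With the base case in hand, the remaining steps are routine bookkeeping. The standard fact that lexicographic monomial orders on a polynomial ring over a domain are compatible with multiplication gives that the leading monomial of a product is the product of the leading monomials, with coefficient equal to the product of the leading coefficients. Iterating this across a monomial $x = \prod_i (v_i^A)^{a_i}$ and using the base case yields that the leading monomial of $\gamma^{\sharp}(x)$ is $\prod_i (v_i^B)^{a_i}$. Since the total orders on $V^A$ and $V^B$ are recovered from exponent sequences by the same recipe, the comparison $x \leq y$ transfers directly to $\prod_i (v_i^B)^{a_i} \leq \prod_i (v_i^B)^{b_i}$ in $V^B$, yielding $\gamma^{\sharp}(x) \leq \gamma^{\sharp}(y)$ by the definition of $\leq$ extended to arbitrary elements. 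The only genuinely nontrivial step is the base case, and its hard work is already done in Proposition~\ref{rational surjectivity}; everything else is formal manipulation of lex orders.
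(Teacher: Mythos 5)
Your proof is correct and rests on the same two ingredients the paper uses: the fact (from equation~\ref{totramgammamodideal}, promoted via degree considerations) that $\gamma^{\sharp}(v_j^A)$ has leading monomial $v_j^B$ with nonzero coefficient $\pi_A/\pi_B$, and compatibility of the lex order with multiplication over a domain. The paper's own proof organizes this as a two-case comparison (on whether the top index of $x$ is strictly less than or equal to that of $y$) followed by a ``downward induction,'' whereas you compute the leading monomial of $\gamma^{\sharp}$ of an arbitrary monomial in one stroke and then compare; the second organization is arguably cleaner and also records the leading coefficient explicitly, but the mathematical content is the same.
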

\begin{proof}
Let $q$ be the cardinality of the residue field of $A$ (hence also $B$), and let $\pi_A,\pi_B$ be uniformizers of $A,B$, respectively.
Equation~\ref{hazewinkel log coeff eq} implies that
\begin{equation}\label{hazewinkel log coeff eq 2} \gamma(v_n^A) = \frac{\pi_A}{\pi_B} v_n^B +\sum_{i=1}^{n-1} \ell_i^B \left( \frac{\pi_A}{\pi_B} (v_{n-i}^B)^{q^i} - \gamma(v_{n-i}^A)^{q^i}\right) .\end{equation}
Suppose that $x\leq y$.
Let $n_x$ be the largest integer $n$ such that $v_n^A$ divides $x$, and similar for $n_y$.
If $n_x < n_y$, then $\gamma^{\sharp}(y)$ has monomials divisible by $v_{n_y}^B$ but 
there is no monomial of $\gamma^{\sharp}(x)$ divisible by $v_n^B$ for $n\geq n_y$.
Consequently $\gamma^{\sharp}(x) \leq \gamma^{\sharp}(y)$.
If $n_x = n_y$, then $x/v_{n_x}^A \leq y/v_{n_y}^A$, and we replace $x,y$ with $x/v_{n_x}^A \leq y/v_{n_y}^A$ in the above argument;
the obvious downward induction now implies that $\gamma^{\sharp}(x)\leq \gamma^{\sharp}(y)$.
\end{proof}

\begin{prop} \label{comparison map is injective}
Let $K,L$ be $p$-adic number fields with rings of
integers $A,B$, and let $L/K$ be a totally ramified, 
finite extension. 
Then $V^A\otimes_A B\stackrel{\gamma^{\sharp}}{\longrightarrow}V^B$ is injective.
\end{prop}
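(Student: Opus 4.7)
The plan is to use the monomial ordering from Definition~\ref{vaordering} together with the explicit form of $\gamma^{\sharp}$ on the Hazewinkel generators in order to track leading monomials and show that $\gamma^{\sharp}$ sends any nonzero element of $V^A\otimes_A B \cong B[v_1^A, v_2^A, \dots]$ to a nonzero element of $V^B \cong B[v_1^B, v_2^B, \dots]$. Since $V^B$ is an integral domain, it will suffice to exhibit, for any given nonzero $f$, at least one monomial of $V^B$ which appears in $\gamma^{\sharp}(f)$ with nonzero $B$-coefficient.

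The key input will be that
\[ \gamma^{\sharp}(v_n^A) \;=\; \frac{\pi_A}{\pi_B}\, v_n^B \;+\; (\text{terms in } B[v_1^B, \dots, v_{n-1}^B]), \]
which follows from the recursion~\ref{hazewinkel log coeff eq 2} derived in the proof of Lemma~\ref{gamma preserves ordering} combined with the observation (also in that proof) that no $v_j^B$ with $j>n$ appears in $\gamma^{\sharp}(v_n^A)$. Because $L/K$ is totally ramified with $\pi_A = u\pi_B^e$ for a unit $u\in B^{\times}$ and $e=[L:K]$, the coefficient $\pi_A/\pi_B = u\pi_B^{e-1}$ is a nonzero element of the integral domain $B$. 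Multiplying out, for any monomial $m = (v_{i_1}^A)^{a_1}\cdots (v_{i_k}^A)^{a_k}$ in $V^A\otimes_A B$, the lex-leading monomial of $\gamma^{\sharp}(m)$ in $V^B$ will be the corresponding monomial $\overline{m} := (v_{i_1}^B)^{a_1}\cdots (v_{i_k}^B)^{a_k}$, with nonzero coefficient $(\pi_A/\pi_B)^{a_1+\dots+a_k}\in B$.

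The correspondence $m\leftrightarrow \overline{m}$ is an order-preserving bijection between monomials of $V^A\otimes_A B$ and monomials of $V^B$ (since the lex order depends only on exponent sequences). Given any nonzero $f = \sum_m c_m m$ with $c_m\in B$, let $m_0$ be the lex-largest monomial appearing in $f$ with $c_{m_0}\ne 0$. For every $m<m_0$ appearing in $f$, every monomial of $\gamma^{\sharp}(m)$ is lex $\leq \overline{m} < \overline{m_0}$, so such terms contribute nothing to the $\overline{m_0}$-coefficient of $\gamma^{\sharp}(f)$; meanwhile $\gamma^{\sharp}(m_0)$ contributes $c_{m_0}(\pi_A/\pi_B)^{|m_0|}\ne 0$ at $\overline{m_0}$, where $|m_0|$ denotes the total degree of $m_0$ in the Hazewinkel generators. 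Hence $\gamma^{\sharp}(f)\ne 0$, proving injectivity. The only nontrivial ingredient is the leading-term formula for $\gamma^{\sharp}(v_n^A)$, and that has essentially been extracted already in the proofs of Proposition~\ref{rational surjectivity} and Lemma~\ref{gamma preserves ordering}; the rest is bookkeeping with the monomial order.
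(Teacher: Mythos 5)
Your proof is correct and follows essentially the same route as the paper's: pick out the lex\hyp{}largest monomial $m_0$ of $f$, observe that $\gamma^{\sharp}(v_n^A)=\tfrac{\pi_A}{\pi_B}v_n^B+(\text{terms in }B[v_1^B,\dots,v_{n-1}^B])$ by equation~\ref{totramgammamodideal}, and conclude that the coefficient of $\overline{m_0}$ in $\gamma^{\sharp}(f)$ is nonzero. If anything you have made the argument slightly more explicit than the paper does, by spelling out that the lex\hyp{}leading monomial of $\gamma^{\sharp}(m)$ is exactly $\overline{m}$ with leading coefficient $(\pi_A/\pi_B)^{\deg m}$ (nonzero in the domain $B$), and that monomials $m<m_0$ therefore contribute only to strictly smaller positions in the lex order; this closes a small gap the paper leaves to the reader in the phrase ``adding lesser monomials to $\overline{g}$ cannot yield zero.''
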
\begin{proof} 
Let $\pi_A,\pi_B$ be uniformizers for $A,B$ respectively.
By equation~\ref{totramgammamodideal}, for each $n$, $\gamma^{\sharp}(v_n^A) \equiv \frac{\pi_A}{\pi_B} v_n^B$ modulo $(v_1^B, v_2^B, \dots , v_{n-1}^B)$.
Suppose that $f\in V^A$ is nonzero.
Order the monomials in $f$ using the lexicographic order from Definition~\ref{vaordering}.
Since this is a total ordering, there exists a monomial $g$ of $f$ which is maximal. 
By equation~\ref{hazewinkel log coeff eq 2}, $\gamma$ is nonvanishing on each monomial in $V^A$, so $\gamma^{\sharp}(g) \neq 0$.
Choose a maximal monomial $\overline{g}$ of $\gamma^{\sharp}(g)$. Adding lesser (in the partial ordering) monomials to $\overline{g}$ cannot yield zero,
so Lemma~\ref{gamma preserves ordering} implies that $\gamma^{\sharp}(f) \neq 0$. So $\gamma^{\sharp}$ is injective.
\end{proof}

\begin{theorem}\label{comparison map is a rational iso}
Let $K/\mathbb{Q}_p$ be a totally ramified finite extension,
and let $A$ be the ring of integers of $K$.
Then the resulting map $BP_*\otimes_{\mathbb{Z}_{(p)}} A  \stackrel{
}{\longrightarrow} V^A$
is a rational isomorphism, i.e., the graded ring homomorphism
\[ p^{-1}(BP_*\otimes_{\mathbb{Z}_{(p)}} A) \stackrel{
}{\longrightarrow} p^{-1}V^A \]
is an isomorphism.
\end{theorem}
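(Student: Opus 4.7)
The plan is to deduce this theorem from the two main results proved earlier in this section, Proposition~\ref{comparison map is injective} (injectivity of $\gamma^{\sharp}$) and Proposition~\ref{rational surjectivity} (rational surjectivity of $\gamma^{\sharp}$), applied to the totally ramified extension $K/\mathbb{Q}_p$. The only real content beyond those propositions is to identify the map $BP_*\otimes_{\mathbb{Z}_{(p)}} A \to V^A$ with the corresponding $\gamma^{\sharp}$-map for $K/\mathbb{Q}_p$.

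First I would observe that since $K/\mathbb{Q}_p$ is totally ramified, the residue field of $A$ coincides with $\mathbb{F}_p$, so the integer $q$ appearing in Hazewinkel's construction of $V^A$ (see Remark~\ref{remark on log coefficients}) equals $p$. In particular, the logarithm $\log_{F^A}(X) = \sum_{n\geq 0} \ell_n^A X^{p^n}$ of the universal $A$-typical formal $A$-module is $p$-typical, so its underlying formal group law is classified by a ring map $BP_* \to V^A$, which extends $A$-linearly to the map in the statement.

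Next I would identify this map with $\gamma^{\sharp}$. Applying Hazewinkel's construction to $\hat{\mathbb{Z}}_p$ (again using $q=p$) gives $V^{\hat{\mathbb{Z}}_p} \cong \hat{\mathbb{Z}}_p[v_1,v_2,\dots]$, with the Hazewinkel generators defined by the same equations~\ref{hazewinkel formula} that define $BP_* \cong \mathbb{Z}_{(p)}[v_1,v_2,\dots]$. Hence $V^{\hat{\mathbb{Z}}_p} \cong BP_* \otimes_{\mathbb{Z}_{(p)}}\hat{\mathbb{Z}}_p$ as graded rings, and consequently
\[ V^{\hat{\mathbb{Z}}_p}\otimes_{\hat{\mathbb{Z}}_p} A \;\cong\; BP_*\otimes_{\mathbb{Z}_{(p)}} A. \]
Under this identification, the map of the theorem is precisely the $\gamma^{\sharp}$-map associated to the totally ramified extension $K/\mathbb{Q}_p$ of $p$-adic number fields.

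Finally I would combine the previous propositions: Proposition~\ref{comparison map is injective} gives injectivity of $\gamma^{\sharp}$, which is preserved by the exact functor $p^{-1}(-)$; Proposition~\ref{rational surjectivity} gives surjectivity of $\gamma^{\sharp}\otimes_{\mathbb{Z}}\mathbb{Q}$, and since both the source and target are $\mathbb{Z}_{(p)}$-modules, inverting $p$ agrees with tensoring over $\mathbb{Z}$ with $\mathbb{Q}$. Together these show $p^{-1}\gamma^{\sharp}$ is an isomorphism, as claimed. I do not anticipate any serious obstacle here: the theorem is essentially bookkeeping on top of the earlier work, and the one point really worth being careful about is that total ramification forces $q = p$ and hence both the $p$-typicality of the underlying formal group law and the isomorphism $V^{\hat{\mathbb{Z}}_p} \cong BP_* \otimes_{\mathbb{Z}_{(p)}}\hat{\mathbb{Z}}_p$.
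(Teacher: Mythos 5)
Your proof is correct and takes essentially the same approach as the paper: identify $BP_*\otimes_{\mathbb{Z}_{(p)}} A$ with $V^{\hat{\mathbb{Z}}_p}\otimes_{\hat{\mathbb{Z}}_p} A$ via Hazewinkel's computation, then invoke Proposition~\ref{comparison map is injective} for injectivity and Proposition~\ref{rational surjectivity} for rational surjectivity. You have simply spelled out in more detail the identification step and the (harmless) point that $p^{-1}(-)$ agrees with $-\otimes_{\mathbb{Z}}\mathbb{Q}$ here, both of which the paper leaves implicit.
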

\begin{proof}
The map $BP_*\otimes_{\mathbb{Z}_{(p)}} \hat{\mathbb{Z}}_p \rightarrow V^{\hat{\mathbb{Z}}_p}$ classifying the underlying $p$-typical formal group law of the universal $\hat{\mathbb{Z}}_p$-typical formal $\hat{\mathbb{Z}}_p$-module is an isomorphism, by Hazewinkel's computation of $V^R$ for an arbitrary discrete valuation ring $R$ with finite residue field; see section~21.5 of~\cite{MR2987372}.
The map $\gamma^{\sharp}: V^{\hat{\mathbb{Z}}_p} \otimes_{\hat{\mathbb{Z}}_p} A \rightarrow V^A$ is injective by Proposition~\ref{comparison map is injective},
and it is rationally surjective by Proposition~\ref{rational surjectivity}.
\end{proof}

\section{Nonrealizability in the not-totally-ramified case.}\label{non-tot-ram section}

When $A$ is a discrete valuation ring with finite residue field, I write $V^A$ for the classifying ring of $A$-typical formal $A$-modules, which as Hazewinkel proved, is isomorphic to a polynomial $A$-algebra on countably infinitely many generators: $V^A \cong A[v_1^A, v_2^A, \dots ]$ (see chapter 21 of~\cite{MR2987372} for this fact).

Here is an easy observation: the spectrum $H\mathbb{F}_p$ has the property that its $BP$-homology $BP_*(H\mathbb{F}_p)$,
as a graded $BP_*$-module, splits as a direct sum of suspensions of the residue field $\mathbb{F}_p$ of $BP_*$.
Consequently, if $A$ is the ring of integers in a degree $d$ unramified field extension of $\mathbb{Q}_p$, then 
$BP_*(H\mathbb{F}_p^{\vee d})$ admits the structure of a $V^A$-module, since the residue field of $V^A$ is $\mathbb{F}_{p^d}$,
which is isomorphic to $\mathbb{F}_p^{\oplus d}$ as a $BP_*$-module, and we can simply let all the generators
$\pi, v_1^A, v_2^A, \dots$ of $V^A$ act trivially on $BP_*(H\mathbb{F}_p^{\vee d})$.

This is a pretty silly example, though: $H\mathbb{F}_p$ is a {\em dissonant} spectrum (see Remark~\ref{dissonance remark} for a brief review
of dissonant spectra and their pathological properties), so one cannot use
(and does not need) chromatic techniques to learn anything at all about $H\mathbb{F}_p$. One would like to know if there exist
any non-dissonant spectra $X$ such that $BP_*(X)$ is the underlying $BP_*$-module of a $V^A$-module.

In this section we see that the answer is {\em no}: if $X$ is a spectrum and $BP_*(X)$ is the underlying $BP_*$-module of a $V^A$-module,
where $A$ is the ring of integers of some field extension $K/\mathbb{Q}_p$, then either $K/\mathbb{Q}_p$ is totally ramified or
$X$ is dissonant (this is Theorem~\ref{unramified dissonance thm}). This tells us that $BP_*(X)$ cannot be a $V^A$-module,
for $K/\mathbb{Q}_p$ not totally ramified, unless $X$ has some rather pathological properties; for example, $BP_*(X)$ cannot be finitely-presented
as a $BP_*$-module (this is Corollary~\ref{unramified case and finite presentation}), and by 
Corollary~\ref{suspension spectra and VA cor}, the $BP$-homology of a {\em space} which is not $p$-locally contractible is
never a $V^A$-module unless $K/\mathbb{Q}_p$ is totally ramified, due to the Hopkins-Ravenel result~\cite{MR1317578} that suspension spectra
are harmonic. Finally, Corollary~\ref{unramified VA nonrealizability} gives us that $V^A$ itself is not the $BP$-homology of any spectrum. 

Before we begin with new results, here are three known results we will use, Lemma~\ref{bousfield class equality} and Theorems~\ref{ravenels local conj} and~\ref{smashing conj}:
\begin{lemma}\label{bousfield class equality}
The Bousfield class of $L_{E(n)}BP$ is equal to the Bousfield class of $v_n^{-1}BP$, the telescope of $v_n$ on $BP$.
\end{lemma}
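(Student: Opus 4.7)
The plan is to deduce the lemma by combining the forthcoming smashing theorem (Theorem~\ref{smashing conj}) with Ravenel's localization theorem (Theorem~\ref{ravenels local conj}). The smashing theorem yields $L_{E(n)}BP\simeq BP\wedge L_{E(n)}S$, and as a first consequence it gives $\langle L_{E(n)}S\rangle = \langle E(n)\rangle$: since $L_{E(n)}S\wedge X = L_{E(n)}X$, this smash product vanishes exactly when $X$ is $E(n)$-acyclic, which is the defining property of the Bousfield class of $E(n)$. Ravenel's localization theorem then identifies $\langle E(n)\rangle$ with $\langle v_n^{-1}BP\rangle$, so it is enough to show $\langle L_{E(n)}BP\rangle = \langle E(n)\rangle$.

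Using Bousfield's identity $\langle X\wedge Y\rangle = \langle X\rangle\wedge\langle Y\rangle$ for the meet operation in the Bousfield lattice, I would compute
\[ \langle L_{E(n)}BP\rangle \;=\; \langle BP\wedge L_{E(n)}S\rangle \;=\; \langle BP\rangle \wedge \langle L_{E(n)}S\rangle \;=\; \langle BP\rangle \wedge \langle E(n)\rangle. \]
Because $E(n)$ is a module spectrum over $BP$, it is a retract of $E(n)\wedge BP$, whence $\langle E(n)\rangle \leq \langle BP\rangle$; the meet on the right therefore collapses to $\langle E(n)\rangle$. Concatenating everything yields $\langle L_{E(n)}BP\rangle = \langle E(n)\rangle = \langle v_n^{-1}BP\rangle$, as claimed.

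The main obstacle is essentially nonexistent once Theorems~\ref{ravenels local conj} and~\ref{smashing conj} are accepted as black boxes, since the remainder is a short formal manipulation in the Bousfield lattice. The only thing to double-check is that this lemma is not used inside the proofs of those two deep theorems (so that the citation is non-circular); assuming that, the lemma is simply a convenient repackaging of Ravenel's localization theorem once the smashing theorem is in hand.
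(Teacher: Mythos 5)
The paper's own ``proof'' of this lemma is simply a citation to Lemma~8.1.4 of Ravenel's {\em Nilpotence and Periodicity}~\cite{MR1192553}; there the equality $\langle L_{E(n)}BP\rangle = \langle v_n^{-1}BP\rangle$ is established directly. Your attempt instead tries to {\em derive} the lemma from Theorems~\ref{ravenels local conj} and~\ref{smashing conj}, and while parts of the derivation are salvageable, two steps are genuinely broken.

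The more serious problem is the step asserting that ``Ravenel's localization theorem then identifies $\langle E(n)\rangle$ with $\langle v_n^{-1}BP\rangle$.'' Theorem~\ref{ravenels local conj} says nothing of the sort: it gives the equivalence $BP\smash L_{E(n)}Y \simeq Y\smash L_{E(n)}BP$ and, under the hypothesis $v_{n-1}^{-1}BP_*(Y)\cong 0$, the computation $BP_*(L_{E(n)}Y)\cong v_n^{-1}BP_*(Y)$. The Bousfield-class identification $\langle E(n)\rangle = \langle v_n^{-1}BP\rangle$ is a separate, substantial result of Ravenel (Theorem~2.1(d) of his 1984 paper on localization with respect to periodic homology theories); its proof uses the Landweber filtration theorem and is independent of Theorem~\ref{ravenels local conj}. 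Once you correctly establish $\langle L_{E(n)}BP\rangle = \langle E(n)\rangle$, the remaining content of the lemma is precisely $\langle E(n)\rangle = \langle v_n^{-1}BP\rangle$, so the misattribution here amounts to sweeping the whole difficulty under the rug: you have essentially reduced the lemma to itself plus the smashing theorem.

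The smaller problem is the invocation of ``Bousfield's identity $\langle X\wedge Y\rangle = \langle X\rangle \wedge\langle Y\rangle$'' for the lattice meet. This is not an identity: in general one only has $\langle X\wedge Y\rangle \leq \langle X\rangle\wedge\langle Y\rangle$, and the reverse inequality can fail. Your argument therefore only yields $\langle L_{E(n)}BP\rangle \leq \langle E(n)\rangle$. The equality $\langle L_{E(n)}BP\rangle = \langle E(n)\rangle$ is true, but the clean way to see it is direct: $(BP\wedge L_{E(n)}S)\wedge Z \simeq BP\wedge L_{E(n)}Z$, and $BP_*(L_{E(n)}Z)=0$ forces $E(n)_*(L_{E(n)}Z)=0$ by Landweber exactness, which forces $L_{E(n)}Z\simeq *$ since $L_{E(n)}Z$ is $E(n)$-local; the converse direction is immediate. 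You should also be wary, as you note, of circularity: Lemma~8.1.4 sits in the same chapter of~\cite{MR1192553} that develops the localization and smashing theorems, so any derivation of the lemma from those theorems needs to be checked against Ravenel's own exposition to make sure the logical order is not reversed.
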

\begin{proof}
This is Lemma~8.1.4 in Ravenel's book~\cite{MR1192553}.
\end{proof}
\begin{theorem} \label{ravenels local conj}{\bf (Ravenel's localization conjecture; proven by Hopkins and Ravenel.)} 
Let $Y$ be a spectrum and $n$ a positive integer.
Then $BP\smash L_{E(n)} Y \cong Y \smash L_{E(n)}BP$.
Consequently, 
if $v_{n-1}^{-1}BP_*(Y) \cong 0$, then 
$BP_*(L_{E(n)}Y) \cong v_n^{-1}BP_*(Y)$.
\end{theorem}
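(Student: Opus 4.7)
The plan splits the theorem into its two stated assertions. For the first, that $BP\smash L_{E(n)} Y \simeq Y\smash L_{E(n)}BP$, I would invoke the smash product theorem (Theorem~\ref{smashing conj}), which asserts that $L_{E(n)}$ is smashing, i.e., $L_{E(n)} Z \simeq Z \smash L_{E(n)} S$ for every spectrum $Z$. Applying this to $Z=Y$ and smashing with $BP$, and then separately to $Z=BP$ and smashing with $Y$, identifies both sides of the desired equivalence with $BP \smash Y \smash L_{E(n)} S$.

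For the second assertion, assume $v_{n-1}^{-1}BP_*(Y)=0$. Combining the first assertion with the smash product theorem gives $BP \smash L_{E(n)} Y \simeq L_{E(n)}(BP \smash Y)$, so the task reduces to producing a natural equivalence $L_{E(n)}(BP \smash Y) \simeq v_n^{-1}BP \smash Y$. I would show that the canonical map $\phi\colon BP \smash Y \to v_n^{-1}BP \smash Y$ is the $E(n)$-localization, which amounts to verifying (a) $\phi$ is an $E(n)$-equivalence, and (b) the target $v_n^{-1}BP \smash Y$ is $E(n)$-local.

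For (a), let $F$ be the fiber of $\phi$. Since $\phi$ inverts $v_n$ on $\pi_*$, $F$ is $v_n$-torsion, and therefore $F \smash K(n) \simeq \ast$ (because $v_n$ then acts both nilpotently and invertibly on $F \smash K(n)$). For $i < n$, the hypothesis combined with the standard Bousfield-class decomposition $\langle v_{n-1}^{-1}BP \rangle = \bigvee_{i<n}\langle K(i)\rangle$ (in the spirit of Lemma~\ref{bousfield class equality}) forces $Y \smash K(i) \simeq \ast$; hence $BP \smash Y \smash K(i)$ and $v_n^{-1}BP \smash Y \smash K(i)$ both vanish, and the cofiber sequence yields $F \smash K(i) \simeq \ast$. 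Thus $F$ is $E(n)$-acyclic. For (b), $v_n^{-1}BP \smash Y$ is a module over $v_n^{-1}BP$, whose Bousfield class is $\langle L_{E(n)}BP\rangle = \langle E(n)\rangle$ by Lemma~\ref{bousfield class equality}; the adjunction $\mathrm{Map}(Z,M)\simeq \mathrm{Map}_{v_n^{-1}BP}(v_n^{-1}BP\smash Z, M)$ then shows that any $E(n)$-acyclic $Z$ maps trivially into $M=v_n^{-1}BP \smash Y$. Combining (a) and (b), $\phi$ realizes $v_n^{-1}BP \smash Y$ as $L_{E(n)}(BP \smash Y) \simeq BP \smash L_{E(n)}Y$, and taking $\pi_*$ yields $BP_*(L_{E(n)}Y) \cong v_n^{-1} BP_*(Y)$.

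The principal obstacle is the smash product theorem itself, which is the deep Hopkins-Ravenel nilpotence input; granted it, the remainder is formal Bousfield-class bookkeeping together with a short module-theoretic locality argument. A secondary point requiring care is the conversion of the hypothesis $v_{n-1}^{-1}BP_*(Y)=0$ into $K(i)$-acyclicity of $Y$ for $i<n$, which appeals to Ravenel's Bousfield-class decomposition of $\langle E(n-1)\rangle$ not directly captured by Lemma~\ref{bousfield class equality}.
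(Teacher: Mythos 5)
The paper does not prove this theorem---it simply cites section~8.1 of Ravenel's book~\cite{MR1192553}---so there is no internal argument to compare against, and I evaluate your sketch on its own. Your derivation of the first assertion from the smashing theorem (Theorem~\ref{smashing conj}) is valid once that theorem is granted, but note that in Ravenel's own development the logical order is reversed: the localization theorem is proved in section~8.1 and is then an input to the proof of the smashing theorem in section~8.2. Treating both as black boxes, as the paper does, this is harmless, but your route would be circular as a self-contained proof.

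On the second assertion the overall plan is sound and part~(b) is fine, but there is a genuine gap in part~(a) at the step $K(n)\smash F \simeq \ast$. Your parenthetical justification---that $v_n$ acts both nilpotently and invertibly on $\pi_*(F\smash K(n))$---does not go through as stated: the fact that $\pi_*(F)$ is $v_n$-power-torsion does \emph{not} automatically make the $F$-side action of $v_n$ on $\pi_*(K(n)\smash F)$ locally nilpotent, since there is no simple algebraic relation between $\pi_*(F)$ and $\pi_*(K(n)\smash F)$. The repair is to observe that $F \simeq F_0 \smash Y$ where $F_0$ is the fiber of $BP \to v_n^{-1}BP$, and that $K(n)\smash F_0 \simeq \ast$: indeed $\eta_R(v_n)$ is already a unit in $\pi_*(K(n)\smash BP) \cong K(n)_*\otimes_{BP_*}BP_*BP$ (because $\eta_R(v_n)\equiv v_n$ modulo $I_n$ and $I_n = 0$ in $K(n)_*$), so $K(n)\smash BP \to K(n)\smash v_n^{-1}BP$ is an equivalence of telescopes. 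Hence $K(n)\smash F \simeq K(n)\smash F_0\smash Y\simeq\ast$. With that fix, and with your (correctly flagged) use of $\langle v_{n-1}^{-1}BP\rangle = \langle E(n-1)\rangle = \bigvee_{i\le n-1}\langle K(i)\rangle$---which needs Ravenel's Bousfield decomposition of $\langle E(n-1)\rangle$ in addition to Lemma~\ref{bousfield class equality}---the remainder of your argument for $E(n)$-acyclicity of $F$ and $E(n)$-locality of $v_n^{-1}BP\smash Y$ goes through.
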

\begin{proof} See section 8.1 of~\cite{MR1192553}.
\end{proof}

\begin{theorem} \label{smashing conj}{\bf (Ravenel's smashing conjecture; proven by Hopkins and Ravenel.)} 
Let $n$ be a nonnegative integer. Then every $p$-local spectrum is $E(n)$-prenilpotent. Consequently:
\begin{itemize}
\item $E(n)$-localization is smashing, i.e., $L_{E(n)} X \simeq L_{E(n)}S \smash X$ for all $p$-local spectra $X$, and
\item $E(n)$-nilpotent completion coincides with $E(n)$-localization, i.e., $\hat{X}_{E(n)} \simeq L_{E(n)}X$ for all $p$-local spectra $X$.
\end{itemize}
\end{theorem}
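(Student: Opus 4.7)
The plan is to follow Hopkins and Ravenel's original argument, which occupies Chapter 8 of the orange book. Structurally, the two bulleted consequences formally follow from the first clause: once a spectrum $X$ is known to be $E(n)$-prenilpotent, the $E(n)$-based Adams tower converges in the strongest sense, so $\hat{X}_{E(n)} \simeq L_{E(n)}X$, and by comparing the tower for $S_{(p)}$ smashed with $X$ against the tower for $X$, one sees that $L_{E(n)}X \simeq L_{E(n)}S \smash X$. Therefore the real content is $E(n)$-prenilpotence of every $p$-local spectrum, which by smashing reduces to proving it for the $p$-local sphere $S_{(p)}$ itself.

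To prove $S_{(p)}$ is $E(n)$-prenilpotent, I would build $L_{E(n)}S_{(p)}$ as a finite homotopy limit of a ``chromatic tower'' whose stages are assembled from telescopes of type-$k$ complexes. Using the Hopkins--Smith periodicity theorem, for each $0 \leq k \leq n$ one selects a type-$k$ finite complex $V(k)$ equipped with a $v_k$-self map $f_k$, and forms the telescope $f_k^{-1}V(k)$. The essential computational input is Theorem~\ref{ravenels local conj}, which identifies $BP_*(L_{E(n)}Y)$ with $v_n^{-1}BP_*(Y)$ whenever $v_{n-1}^{-1}BP_*(Y)=0$; combined with Lemma~\ref{bousfield class equality}, this guarantees that the $k$-th monochromatic layer can be modeled, on $BP$-homology, by inverting $v_k$ and killing the preceding generators. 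Descending from height $n$, one assembles $L_{E(k)}S_{(p)}$ from $L_{E(k+1)}S_{(p)}$ by a fiber sequence whose fiber is the $k$-th monochromatic piece built from $V(k)$; because only finitely many heights are involved, the resulting tower terminates and witnesses $E(n)$-prenilpotence.

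The main obstacle, and really the entire depth of the theorem, is establishing Theorem~\ref{ravenels local conj} itself, i.e.\ the equivalence $BP \smash L_{E(n)}Y \simeq Y \smash L_{E(n)}BP$. This is a delicate convergence statement for the $BP$-based Adams spectral sequence computing $\pi_*(L_{E(n)}Y)$, and it is precisely where the Devinatz--Hopkins--Smith nilpotence technology is indispensable: one must rule out infinitely many nontrivial differentials or extensions that could a priori obstruct the algebraic identification $v_n^{-1}BP_*(Y)$ of the $E(n)$-local $BP$-homology. Once this single deep input is granted, the remainder of the argument is a bookkeeping induction on chromatic height, and the two ``Consequently'' bullets fall out formally from $E(n)$-prenilpotence of the sphere by a standard smashing argument.
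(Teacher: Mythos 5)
The paper offers no proof of its own here: its ``proof'' is the single line ``See section 8.2 of~\cite{MR1192553}.'' Your proposal attempts to reconstruct the Hopkins--Ravenel argument from scratch, which is a far more ambitious task, and the reconstruction contains a genuine gap.

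Your high-level framing is correct: the two bullets are formal consequences of $E(n)$-prenilpotence of $S_{(p)}$ (prenilpotence means precisely that the $E(n)$-nilpotent completion coincides with $E(n)$-localization, and smashingness of $L_{E(n)}$ then follows by commuting a finite limit of the Adams tower with $(-)\smash X$). The difficulty is entirely in establishing prenilpotence of the sphere, and there your sketch does not go through. You propose to build $L_{E(n)}S_{(p)}$ from telescopes $f_k^{-1}V(k)$ via the finitely many stages of a chromatic tower, and then assert ``because only finitely many heights are involved, the resulting tower terminates and witnesses $E(n)$-prenilpotence.'' This inference is false: having a finite chromatic tower is a statement about $L_{E(0)},\dots,L_{E(n)}$, not about membership in the thick subcategory generated by $E(n)$-module spectra, which is what prenilpotence requires. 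Moreover, taking the telescope $f_k^{-1}V(k)$ as a model for the $k$-th monochromatic layer is exactly the assertion of the telescope conjecture, which is not available as an input (and is now known to fail for heights $\geq 2$). Hopkins and Ravenel instead prove prenilpotence by establishing a \emph{horizontal vanishing line} at a finite filtration in the $E_\infty$-page of the $BP$-based Adams spectral sequence for a generalized Moore spectrum $S/(p^{i_0},v_1^{i_1},\dots,v_{n-1}^{i_{n-1}})$, which uses the nilpotence theorem in an essential way; they then deduce the same for all finite $p$-local spectra, including the sphere, via the thick subcategory theorem. That vanishing-line argument is the content you would need, and it is absent from your sketch.

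A secondary point: you position Theorem~\ref{ravenels local conj} (the localization theorem) as something to be established in the course of proving the smashing theorem. In both Ravenel's book and this paper, the localization theorem is a separate, prior result (Section 8.1 of~\cite{MR1192553}, cited independently as Theorem~\ref{ravenels local conj} here), so it should be invoked, not re-derived, inside this proof. Given that the paper's own intent is simply to cite the smashing theorem, the appropriate ``proof'' for the present paper's purposes is the citation; but if you do want to sketch the Hopkins--Ravenel argument, the vanishing-line plus thick-subcategory mechanism is the piece your proposal is missing.
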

\begin{proof} See section 8.2 of~\cite{MR1192553}.
\end{proof}

\begin{definition}\label{def of eventual div}
Let $R$ be a  graded ring, and let $M$ be a graded left $R$-module. Let $r\in R$ be a homogeneous element.
\begin{itemize}
\item We say that $M$ is {\em $r$-power-torsion}
if, for each homogeneous $x\in M$, there exists some nonnegative integer $m$ such
that $r^mx = 0$.
\item 
We say that {\em $r$ acts trivially on $M$} if $rx = 0$ for all homogeneous $x\in M$.
\item 
We say that {\em $r$ acts without torsion on $M$} if for all homogeneous nonzero $x\in M$, $rx \neq 0$.
\item
We say that {\em $r$ acts invertibly on $M$} if, for each homogeneous $x\in M$, there exists
some $y\in M$ such that $ry = x$.
\item
Suppose $s\in R$ is a homogeneous element.
We say that {\em $r$ acts with eventual $s$-division on $M$} if, 
for each homogeneous $x\in M$, there exists some nonnegative integer $n$ and some $y\in M$ such that $sy = r^nx$.
\item
Suppose $s\in R$ is a homogeneous element, and $I\subseteq R$ is a homogeneous ideal.
We say that {\em $r$ acts with eventual $s$-division on $M$ modulo $I$} if, 
for each homogeneous $x\in M$, there exists some nonnegative integer $n$ and some $y\in M$ such that $sy \equiv r^nx$ modulo $IM$.
\end{itemize}
\end{definition}

Lemmas~\ref{acyclicity lemma} and~\ref{finite presentation and power-torsion} and Proposition~\ref{dissonance lemma} are all easy and surely well-known, but I do not know where to find any of them in the literature:
\begin{lemma}\label{acyclicity lemma}
Let $X$ be a spectrum, and let $n$ be a nonnegative integer.
Then the Bousfield localization $L_{E(n)}X$ is contractible
if and only if $BP_*(X)$ is $v_n$-power-torsion.
\end{lemma}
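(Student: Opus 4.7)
The plan is to assemble the two cited inputs—Ravenel's localization theorem (Theorem~\ref{ravenels local conj}) and the Bousfield class identity $\langle L_{E(n)}BP\rangle = \langle v_n^{-1}BP\rangle$ (Lemma~\ref{bousfield class equality})—into a chain of equivalent conditions, together with the standard fact that an $E(n)$-local spectrum with vanishing $BP$-homology must be contractible. Concretely, I would prove the following chain of equivalences:
\[
L_{E(n)}X \simeq \ast
\ \Longleftrightarrow\
BP_*(L_{E(n)}X)=0
\ \Longleftrightarrow\
\pi_*(X\smash L_{E(n)}BP)=0
\ \Longleftrightarrow\
v_n^{-1}BP_*(X)=0
\ \Longleftrightarrow\
BP_*(X)\text{ is }v_n\text{-power-torsion.}
\]

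The first equivalence is the only one that requires a short argument. The forward direction is trivial. For the converse, $L_{E(n)}X$ is $E(n)$-local by construction, so it suffices to show it is also $E(n)$-acyclic. Since $E(n)$ is a $BP$-module spectrum, any $BP$-acyclic spectrum is $E(n)$-acyclic: writing $E(n) \simeq E(n)\smash_{BP} BP$, we have $E(n)\smash L_{E(n)}X \simeq E(n)\smash_{BP}(BP\smash L_{E(n)}X)\simeq \ast$ whenever $BP_*(L_{E(n)}X)=0$. Being both $E(n)$-local and $E(n)$-acyclic forces $L_{E(n)}X\simeq \ast$.

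The second equivalence is an immediate application of Theorem~\ref{ravenels local conj}, which gives $BP\smash L_{E(n)}X \simeq X\smash L_{E(n)}BP$. The third uses Lemma~\ref{bousfield class equality}: since $\langle L_{E(n)}BP\rangle = \langle v_n^{-1}BP\rangle$, the spectrum $X\smash L_{E(n)}BP$ is contractible if and only if $X\smash v_n^{-1}BP$ is contractible, and the homotopy of the latter is $v_n^{-1}BP_*(X)$. The fourth is the purely algebraic observation that a localization of a graded module at a single homogeneous element vanishes precisely when every element of the module is annihilated by some power of that element.

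There is no real obstacle here; the content of the lemma is just organizing the two deep inputs correctly. The only place one has to be careful is in the first equivalence, where one must remember that Bousfield class equality of $L_{E(n)}BP$ and $v_n^{-1}BP$ does not imply they are equivalent as spectra, only that they have the same acyclics—which is exactly what is needed to pass between $\pi_*(X\smash L_{E(n)}BP)=0$ and $v_n^{-1}BP_*(X)=0$, and no more.
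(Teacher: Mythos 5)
Your proof is correct, but it takes a genuinely different route from the paper's in one direction. The paper handles the two implications by different methods: for ``$BP_*(X)$ is $v_n$-power-torsion implies $L_{E(n)}X\simeq *$'' it invokes Landweber exactness of $E(n)_*$ to compute $E(n)_*(X)\cong v_n^{-1}\bigl(BP_*(X)/(v_{n+1},v_{n+2},\dots)\bigr)$ directly and observe that it vanishes; for the converse it uses Ravenel's localization theorem and the Bousfield class identity, exactly as in your chain. You instead run the Hopkins--Ravenel inputs in both directions, at the price of one extra ingredient: the observation that an $E(n)$-local spectrum with vanishing $BP$-homology is contractible. In exchange you never need Landweber exactness---your step from $\pi_*(X\smash v_n^{-1}BP)$ to $v_n^{-1}BP_*(X)$ can be read off the telescope and the commutation of homotopy with sequential homotopy colimits, whereas the paper routes it through ``$v_n^{-1}BP_*$ is Landweber exact.'' Your version is arguably more uniform (a single chain of equivalences) while the paper's forward direction is shorter and shows more, namely that $X$ itself is already $E(n)$-acyclic, not merely that its $E(n)$-localization is.

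One small caution about the first equivalence: the notation $E(n)\smash_{BP}(BP\smash L_{E(n)}X)$ presupposes a structured (at least $A_\infty$) treatment of $BP$-modules, which the paper does not set up. The conclusion you want---that $BP$-acyclic implies $E(n)$-acyclic---is standard and follows in the bare stable homotopy category from the retract argument: the composite $E(n)\simeq S\smash E(n)\xrightarrow{\eta\smash 1} BP\smash E(n)\xrightarrow{\mu} E(n)$ is the identity (where $\mu$ uses the ring map $BP\to E(n)$), so $E(n)\smash Y$ is a retract of $BP\smash E(n)\smash Y\simeq E(n)\smash(BP\smash Y)$, which is contractible when $BP\smash Y$ is. I would phrase the step that way so that it does not silently depend on a model of $\smash_{BP}$ that the paper never introduces.
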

\begin{proof}
Suppose that $BP_*(X)$ is $v_n$-power-torsion.
Since $E(n)_*$ is a Landweber exact homology theory (see~\cite{MR0423332}),
we have isomorphisms of $BP_*$-modules
\begin{align*}  
 E(n)_*(X) 
  &\cong E(n)_* \otimes_{BP_*} BP_*(X) \\
  &\cong v_n^{-1}\left( BP_*(X) / (v_{n+1},v_{n+2}, \dots ) BP_*(X)\right) \end{align*}
and $BP_*(X)/(v_{n+1},v_{n+2}, \dots ) BP_*(X)$ is $v_n$-power-torsion by Lemma~\ref{quotient of power torsion is power torsion}. So $E(n)_*(X) \cong 0$, so $X$ is $E(n)$-acyclic, so $L_{E(n)}X$ is contractible.

%

On the other hand, if $L_{E(n)}X \cong 0$, 
then $BP \smash L_{E(n)} X$ is contractible, 
hence $L_{E(n)}BP \smash X$ is contractible by Theorem~\ref{ravenels local conj}.
Hence $(v_n^{-1}BP)\smash X$ is contractible by Lemma~\ref{bousfield class equality}.
Now, since $v_n^{-1}BP_*$ is a Landweber-exact homology theory,
we have
\begin{align*}
 0 
   \cong \pi_*\left( (v_n^{-1}BP)\smash X\right) \\
   \cong  \left( v_n^{-1} BP_*\right) \otimes_{BP_*} BP_*(X).
\end{align*}
So inverting $v_n$ kills $BP_*(X)$, so $BP_*(X)$ is $v_n$-power-torsion.
\end{proof}

\begin{remark}\label{dissonance remark}
Recall (from e.g.~\cite{MR737778}) that a spectrum $X$ is said to be {\em dissonant at $p$} (or just {\em dissonant}, if the prime $p$ is understood from the context) if it is acyclic with respect to the wedge 
$\bigvee_{n\in \mathbb{N}} E(n)$ of all the $p$-local Johnson-Wilson theories, equivalently, if it is 
acyclic with respect to the wedge 
$\bigvee_{n\in \mathbb{N}} K(n)$ of all the $p$-local Morava $K$-theories. Dissonant spectra are ``invisible'' to the typical methods of chromatic 
localization. In some sense dissonant spectra are uncommon; for example, it was proven in~\cite{MR1317578}
that all $p$-local suspension spectra are {\em harmonic,} that is, local with respect to the wedge of the $p$-local Johnson-Wilson theories,
hence as far from dissonant as possible. On the other hand, there are some familiar examples of dissonant spectra, for example,
the Eilenberg-Mac Lane spectrum $HA$ of any torsion abelian group $A$ (this example appears in~\cite{MR737778}).
\end{remark}
\begin{prop}\label{dissonance lemma}
Let $p$ be a prime number and let $X$ be a spectrum. Then the following conditions are equivalent:
\begin{itemize}
\item $X$ is dissonant at $p$.
\item $BP_*(X)$ is $v_n$-power-torsion for infinitely many $n$. 
\item $BP_*(X)$ is $v_n$-power-torsion for all $n$. 
\end{itemize}
\end{prop}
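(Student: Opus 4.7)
The plan is to reduce the proposition to Lemma~\ref{acyclicity lemma} together with a monotonicity fact in the chromatic filtration of Bousfield classes.

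First I would observe that (1)~$\iff$~(3) is almost automatic. By Remark~\ref{dissonance remark}, $X$ is dissonant at $p$ iff $X$ is $E(n)$-acyclic for every $n\geq 0$, i.e.\ $L_{E(n)}X\simeq 0$ for every $n$, and Lemma~\ref{acyclicity lemma} converts each such vanishing into the algebraic statement that $BP_*(X)$ is $v_n$-power-torsion. So (1) and (3) say exactly the same thing.

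The implication (3)~$\Rightarrow$~(2) is trivial, so the substance of the proposition is (2)~$\Rightarrow$~(3). I would deduce it from the following monotonicity: if $BP_*(X)$ is $v_{n+1}$-power-torsion, then $BP_*(X)$ is $v_n$-power-torsion. Applying Lemma~\ref{acyclicity lemma} in both directions, this is precisely the implication $L_{E(n+1)}X\simeq 0 \Rightarrow L_{E(n)}X\simeq 0$, which in turn is the standard Bousfield class containment $\langle E(n)\rangle \leq \langle E(n+1)\rangle$ (a consequence of $\langle E(n)\rangle = \bigvee_{i=0}^{n}\langle K(i)\rangle$; see Chapter~8 of~\cite{MR1192553}). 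Given the monotonicity, (2)~$\Rightarrow$~(3) follows by a downward iteration: for any target index $m$, hypothesis~(2) supplies some $n\geq m$ with $BP_*(X)$ being $v_n$-power-torsion, and applying the monotonicity $n-m$ times yields $v_m$-power-torsion.

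The only subtle point is that the monotonicity is \emph{not} a statement about arbitrary graded $BP_*$-modules: for instance $BP_*/(v_{n+1})$ is $v_{n+1}$-power-torsion but not $v_n$-power-torsion, and it is not even a $BP_*BP$-comodule, so Landweber-style algebraic arguments do not directly give what we need. It is essential that $BP_*(X)$ actually arise as the $BP$-homology of a spectrum, and this is exactly where the Hopkins--Ravenel content of Lemma~\ref{acyclicity lemma} is used. Beyond that, the argument is purely formal, so I do not anticipate any real obstacle.
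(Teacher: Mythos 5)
Your proof is correct, and it uses the same fundamental inputs as the paper's — Lemma~\ref{acyclicity lemma} plus the chromatic hierarchy — but you organize them more tightly. You observe that (1)$\iff$(3) drops out immediately from Lemma~\ref{acyclicity lemma} applied at each $n$, since dissonance is precisely $L_{E(n)}X\simeq 0$ for all $n$; and you get (2)$\Rightarrow$(3) from the Bousfield-class inequality $\langle E(n)\rangle\leq\langle E(n+1)\rangle$, which gives the monotonicity of $v_n$-power-torsion for modules that arise as $BP$-homology of spectra. The paper instead proves (2)$\Rightarrow$(1) using the factorization $L_{E(m)}L_{E(n)}\simeq L_{E(m)}$ for $m\leq n$ (the same chromatic fact, phrased as idempotency of localizations rather than as a Bousfield-class inequality), and then proves (1)$\Rightarrow$(3) by a separate induction invoking Ravenel's localization theorem (Theorem~\ref{ravenels local conj}) at each stage, in effect re-deriving the ``only if'' direction of Lemma~\ref{acyclicity lemma}. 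Your version is cleaner in that it factors the proposition through Lemma~\ref{acyclicity lemma} once, without the extra inductive re-proof; the paper's version has the minor virtue of making the role of Ravenel's localization theorem visible inside the proof of the proposition itself, rather than hiding it inside the cited lemma. Your remark about why the monotonicity fails for abstract $BP_*$-modules (e.g.\ $BP_*/(v_{n+1})$) is a genuine and useful observation that the paper does not make explicit at this point, though it is consonant with the paper's broader theme that the nonrealizability results are essentially topological rather than purely comodule-algebraic.
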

\begin{proof}
\begin{itemize}
\item Suppose that $BP_*(X)$ is $v_n$-power-torsion for infinitely many $n$.
For each nonnegative integer $m$, there exists some integer $n$ such that $n\geq m$ and such that $BP_*(X)$ is $v_n$-power-torsion.
Hence, by Lemma~\ref{acyclicity lemma}, $L_{E(n)}X$ is contractible. Hence 
\begin{align*} L_{E(m)}X \simeq  L_{E(m)}L_{E(n)} X  \simeq  L_{E(m)}0  \simeq  0 . \end{align*}
Hence $X$ is $E(m)$-acyclic for all $m$. Hence 
\begin{align*}
 \left( \bigvee_{n\in \mathbb{N}} E(n) \right) \smash X \simeq  \bigvee_{n\in \mathbb{N}} \left( E(n)  \smash X \right) \simeq  \bigvee_{n\in \mathbb{N}} 0 \simeq   0 .\end{align*}
So $X$ is dissonant.
\item 
Now suppose that $X$ is dissonant. Then it is in particular 
$E(0)$-acyclic, so $BP\smash L_{E(0)}X$ must be contractible, so $p^{-1}BP_*(X)$ must be trivial.
So $BP_*(X)$ must be $p$-power-torsion.
This is the initial step in an induction: suppose we have already shown that
$BP_*(X)$ is $v_i$-power-torsion for all $i< n$. Then 
Ravenel's localization conjecture/theorem (see Theorem~7.5.2 of \cite{MR1192553}, or Theorem~\ref{ravenels local conj} in the present paper for the statement of the result)
gives us that $BP_*(L_{E(n)}X) \cong v_n^{-1}BP_*(X)$, and since $X$ is dissonant, 
$L_{E(n)}X$ must be contractible, so $v_n^{-1}BP_*(X)$ must be trivial. Hence $BP_*(X)$ must be $v_n$-power-torsion.
Now by induction, $BP_*(X)$ is $v_n$-power-torsion for all $n$.
\item Clearly if $BP_*(X)$ is $v_n$-power-torsion for all $n$, then it is $v_n$-power-torsion for infinitely many $n$.
\end{itemize}
\end{proof}

\begin{lemma}\label{finite presentation and power-torsion}
Let $A$ be a local commutative ring, let $g: \mathbb{N} \rightarrow \mathbb{N}$ be a monotone strictly increasing function, 
and let $R$ be the commutative graded ring
\[ R = A[x_0, x_1, x_2, \dots ] \]
with $x_i$ in grading degree $g(i)$.
Suppose $M$ is a nonzero finitely-presented graded $R$-module. Then there exists some $n$ such that $M$ is not $x_n$-power-torsion.
\end{lemma}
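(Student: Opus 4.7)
The plan is to exploit the finiteness of a presentation of $M$ to reduce to a polynomial subring in finitely many variables, and then to observe that the remaining variables must act injectively on $M$ essentially by construction.

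First, I would choose a finite presentation $R^a \xrightarrow{\varphi} R^b \to M \to 0$ of graded $R$-modules (absorbing whatever degree shifts are needed into the statement). The map $\varphi$ is specified by an $a\times b$ matrix, each of whose finitely many entries is a polynomial involving only finitely many of the variables $x_0, x_1, \dots$. Hence there exists an integer $N$ such that every entry of $\varphi$ lies in the graded polynomial subring $R' := A[x_0, x_1, \dots, x_N]$. Letting $M'$ be the cokernel of the corresponding graded map $(R')^a \to (R')^b$, right exactness of base change yields $M \cong M' \otimes_{R'} R$. Since $R \cong R' \otimes_A A[x_{N+1}, x_{N+2}, \dots]$ as graded $A$-algebras, this gives an isomorphism of graded $A[x_{N+1}, x_{N+2}, \dots]$-modules
\[ M \;\cong\; M' \otimes_A A[x_{N+1}, x_{N+2}, \dots] . \]

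Next, I would argue that $x_n$ acts injectively on $M$ for each $n > N$. Fixing the monomial $A$-basis $\{e_\alpha\}$ of $A[x_{N+1}, x_{N+2}, \dots]$, the displayed isomorphism expresses $M$ as the direct sum $\bigoplus_\alpha M' \cdot e_\alpha$ of copies of $M'$ indexed by monomials $\alpha$. For $n > N$, multiplication by $x_n$ carries the summand indexed by $\alpha$ into the summand indexed by $x_n \alpha$ via the identity on $M'$, and the map $\alpha \mapsto x_n\alpha$ on monomials is injective. Hence if $x_n\cdot \sum_\alpha m_\alpha e_\alpha = 0$ then every $m_\alpha$ vanishes, i.e., multiplication by $x_n$ is injective on $M$.

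Finally, since $M$ is a nonzero graded module, it contains a nonzero homogeneous element $m$, and injectivity of multiplication by $x_n$ for $n>N$ implies $x_n^k m\neq 0$ for every $k\geq 0$, so $M$ is not $x_n$-power-torsion. The one thing that could a priori go wrong is $M'=0$, but $M\neq 0$ forces $M'\neq 0$ via the tensor-product decomposition. The local hypothesis on $A$ is in fact unnecessary for this argument; the main conceptual content is just that a finite presentation cannot involve variables from arbitrarily high in the list $x_0, x_1, \dots$, so all sufficiently late variables act freely on $M$ up to a harmless $M'$-indexing. There is no real obstacle beyond this basic observation.
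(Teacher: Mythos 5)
Your proof is correct and follows essentially the same strategy as the paper's: pass to the finitely many variables that actually appear in a finite presentation matrix, then observe that the remaining variables act freely. Two small points of comparison are worth recording. First, the paper bounds the variables appearing in each matrix entry via the grading (each entry is homogeneous of a fixed degree, and since $g$ is strictly increasing only finitely many $x_i$ can appear in an element of bounded degree), whereas you bound them by the simpler observation that any single element of a polynomial ring is a finite sum of monomials and hence involves only finitely many variables. Your route is more elementary and makes clear that the monotonicity hypothesis on $g$ is not actually used, as you note. Second, the paper invokes flatness of $R$ over the finite-variable subring and then asserts "$x_i$ acts without torsion" without spelling out the verification; your explicit direct-sum decomposition $M \cong \bigoplus_\alpha M' \cdot e_\alpha$ indexed by monomials, with $x_n$ permuting the summands injectively, is the needed verification and is cleaner than appealing to flatness (which is not actually needed here, since only right-exactness of base change is used to identify $M$ with $M'\otimes_{R'} R$). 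You also correctly handle the one potential degenerate case $M' = 0$.
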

\begin{proof}
Choose a presentation
\begin{equation}\label{presentation 20} \coprod_{s\in S_1} \Sigma^{d_1(s)} R\{ e_s\} \stackrel{f}{\longrightarrow} \coprod_{s\in S_0} \Sigma^{d_0(s)} R\{ e_s\} \rightarrow M \rightarrow 0\end{equation}
for $M$, where $S_1,S_0$ are finite sets and $d_0: S_0 \rightarrow \mathbb{Z}$ and $d_1: S_1\rightarrow \mathbb{Z}$ are simply to keep track of the
grading degrees of the summands $R\{ e_s\}$.
Now for each $s_1\in S_1$ and each $s_0\in S_0$, the component of $f(e_{s_1})$ in the summand $R\{e_{s_1}\}$ of the codomain of $f$ is
some grading-homogeneous polynomial in $R$ of grading degree $d_1(s_1) - d_0(s_0)$. Since $g$ was assumed monotone strictly increasing,
there are only finitely many $x_i$ such that $g(x_i) \leq d_1(s_1) - d_0(s_0)$, so there exists some maximal integer $i$
such that $x_i$ appears in one of the monomial terms of the component of $f(e_{s_1})$ in the summand $R\{e_{s_1}\}$ of the codomain of $f$.
Let $i_{s_0,s_1}$ denote that integer $i$.

Now there are only finitely many elements $s_0$ of $S_0$ and only finitely many elements $s_1$ of $S_1$, so
the maximum $m = \max\{ i_{s_0, s_1}: s_0 \in S_0, s_1\in S_1\}$ is some integer.
Let $\tilde{R}$ denote the commutative graded ring
\[ \tilde{R} = A[x_0, x_1, x_2, \dots , x_m ],\]
with $x_i$ again in grading degree $g(x_i)$,
and regard $R$ as a graded $\tilde{R}$-algebra by the evident ring map $\tilde{R}\rightarrow R$.
then the map $f$ from presentation~\ref{presentation 20} is isomorphic to
\begin{equation}\label{presentation 21} \coprod_{s\in S_1} \Sigma^{d_1(s)} \tilde{R}\{ e_s\} \stackrel{\tilde{f}}{\longrightarrow} \coprod_{s\in S_0} \Sigma^{d_0(s)} \tilde{R}\{ e_s\} \end{equation}
tensored over $\tilde{R}$ with $R$.
Here $\tilde{f}(e_s)$ is defined to be have the same components as $f(e_s)$.
Since $R$ is flat over $\tilde{R}$, we now have that $M\cong R\otimes_{\tilde{R}} \tilde{M}$, where $\tilde{M}$ is the cokernel of~\ref{presentation 21}. Hence 
$x_i$ acts without torsion on $M$ for all $i>m$.
\end{proof}

\begin{theorem}\label{unramified dissonance thm}
Let $K/\mathbb{Q}_p$ be a finite extension of degree $>1$, and which is not totally ramified.
Let $A$ denote the ring of integers of $K$, and 
let $X$ be a spectrum such that $BP_*(X)$ is the underlying $BP_*$-module of 
a $V^A$-module.
Then $X$ is dissonant, and $BP_*(X)$ is $v_n$-power-torsion for all $n\geq 0$.
\end{theorem}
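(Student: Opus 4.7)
The plan is to exploit the hypothesis that $K/\mathbb{Q}_p$ is not totally ramified by reducing to the nontrivial unramified subextension sitting inside $K$. Since $[K : \mathbb{Q}_p] > 1$ and $K/\mathbb{Q}_p$ is not totally ramified, its residue degree $f$ satisfies $f > 1$. Let $K_0 \subseteq K$ be the maximal unramified subextension of $\mathbb{Q}_p$ in $K$, and let $A_0 \subseteq A$ denote its ring of integers. The strategy is to show that infinitely many Hazewinkel generators $v_n \in BP_*$ act as zero on $BP_*(X)$, and then to conclude via Proposition~\ref{dissonance lemma}.

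First I would observe that restriction of scalars along $A_0 \hookrightarrow A$ produces a classifying ring map $V^{A_0} \to V^A$, and that the composite $BP_* \to V^{A_0} \to V^A$ agrees with the direct map $BP_* \to V^A$, since both classify the underlying $p$-typical formal group law of the universal formal $A$-module. Consequently, because the $BP_*$-action on $BP_*(X)$ is by hypothesis the restriction along $BP_* \to V^A$ of a $V^A$-module structure, the action of any $v_i \in BP_*$ on $BP_*(X)$ depends only on its image under $BP_* \to V^{A_0}$.

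Next I would invoke Proposition~\ref{computationofunramifiedgamma} applied to the unramified extension $K_0/\mathbb{Q}_p$ of degree $f$: the Hazewinkel generator $v_i$ is sent to $0$ in $V^{A_0}$ whenever $f \nmid i$. Combined with the previous step, this forces $v_i$ to act as zero on $BP_*(X)$ for every positive integer $i$ not divisible by $f$. Since $f > 1$, infinitely many such $i$ exist, so $BP_*(X)$ is $v_i$-power-torsion (with exponent one) for all of them.

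Finally I would invoke the equivalence in Proposition~\ref{dissonance lemma} to conclude that $X$ is dissonant at $p$, and hence that $BP_*(X)$ is $v_n$-power-torsion for every $n \geq 0$. I do not expect any step here to present a serious obstacle; the only real ingredient is Proposition~\ref{computationofunramifiedgamma}, whose role is precisely to convert the hypothesis ``$K/\mathbb{Q}_p$ is not totally ramified'' into the algebraic statement that the unramified-layer map $BP_* \to V^{A_0}$ annihilates every $v_i$ outside the arithmetic progression of multiples of $f$.
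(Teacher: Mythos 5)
Your proof is correct and follows essentially the same route as the paper's: factor $BP_* \to V^A$ through $V^{A_{nr}}$ for the maximal unramified subextension, apply Proposition~\ref{computationofunramifiedgamma} to see that $v_i$ maps to zero in $V^{A_{nr}}$ when $f \nmid i$, and conclude via Proposition~\ref{dissonance lemma}. One small wording caution: the action of $v_i$ on $BP_*(X)$ is determined by the full composite $BP_* \to V^{A_0} \to V^A$, not merely by its image in $V^{A_0}$ in isolation, but all you actually use is that a $v_i$ killed in $V^{A_0}$ is killed in $V^A$, which is exactly right.
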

\begin{proof}
Let $f$ denote the residue degree of the extension $K/\mathbb{Q}_p$.
Then the natural ring homomorphism $BP_*\rightarrow V^A$ 
factors as the composite of the natural maps
\[ BP_* \rightarrow V^{A_{nr}} \rightarrow V^A,\]
where $A_{nr}$ is the ring of integers of the maximal unramified subextension $K_{nr}/\mathbb{Q}$ of $K/\mathbb{Q}$.
By Proposition~\ref{computationofunramifiedgamma},
the natural map $BP_* \rightarrow V^{A_{nr}}$
sends $v_n$ to zero if $f$ does not divide $n$.
Hence the natural ring homomorphism $BP_*\rightarrow V^A$ sends $v_n$ to zero if $f$ does not divide $n$.
Hence, if $BP_*(X)$ is the underlying $BP_*$-module of some $V^A$-module, then $BP_*(X)$ is $v_n$-power-torsion for all $n$ not divisible by $f$.
Lemma~\ref{dissonance lemma} then implies that $X$ is dissonant. Now Proposition~\ref{dissonance lemma} implies that
$BP_*(X)$ is $v_n$-power-torsion for all $n\geq 0$.
\end{proof}

\begin{corollary}\label{unramified VA nonrealizability}
Let $K/\mathbb{Q}_p$ be a finite extension of degree $>1$, and which is not totally ramified.
Let $A$ denote the ring of integers of $K$.
Then there does not exist a spectrum $X$ such that $BP_*(X) \cong V^A$.
\end{corollary}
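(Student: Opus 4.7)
The plan is to argue by contradiction, using Theorem~\ref{unramified dissonance thm} to force a strong torsion condition on $V^A$ and then exhibiting an element of $BP_*$ whose image in $V^A$ is patently non-nilpotent.

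Suppose a spectrum $X$ with $BP_*(X)\cong V^A$ exists. The ring $V^A$ is tautologically a module over itself, and, via the structure map $BP_*\to V^A$, its underlying $BP_*$-module is exactly $V^A$ with the given $BP_*$-action. Hence $BP_*(X)$ is the underlying $BP_*$-module of a $V^A$-module, and Theorem~\ref{unramified dissonance thm} applies: $V^A$ must be $v_n$-power-torsion for every $n\geq 0$, where the $v_n$-action is multiplication through $BP_*\to V^A$. Applying the power-torsion condition to $1\in V^A$, a power of the image of $v_n$ in $V^A$ must vanish, so the image of each $v_n$ in $V^A$ is nilpotent. But $V^A\cong A[v_1^A,v_2^A,\dots]$ is a polynomial ring over the integral domain $A$, hence reduced; so the image of $v_n$ in $V^A$ must be zero for every $n$.

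To produce the contradiction, I would take $n=f$, where $f\geq 2$ is the residue degree of $K/\mathbb{Q}_p$ (well-defined and $\geq 2$ because $K/\mathbb{Q}_p$ has degree $>1$ but is not totally ramified). Letting $K_{nr}/\mathbb{Q}_p$ be the maximal unramified subextension with ring of integers $A_{nr}$, the map $BP_*\to V^A$ factors as $BP_*\to V^{A_{nr}}\to V^A$. Proposition~\ref{computationofunramifiedgamma} gives $v_f\mapsto v_1^{A_{nr}}$ in $V^{A_{nr}}$; since $K/K_{nr}$ is totally ramified, Proposition~\ref{low degree gamma computation} then gives $v_1^{A_{nr}}\mapsto \frac{\pi_{A_{nr}}}{\pi_A}v_1^A$ in $V^A$, which is a nonzero scalar multiple of the polynomial generator $v_1^A$ and hence nonzero. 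This contradicts the conclusion that every $v_n$ maps to zero in $V^A$, so no such $X$ exists.

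There is no real obstacle here: the topological content is entirely encapsulated in Theorem~\ref{unramified dissonance thm}, and the remaining work is the essentially trivial algebraic observation that $V^A$ is a polynomial ring over a domain while the image of $v_f$ in $V^A$ is a nonzero linear multiple of $v_1^A$. The only point requiring the slightest care is checking that the $V^A$-module structure on $BP_*(X)\cong V^A$ witnessing the hypothesis of Theorem~\ref{unramified dissonance thm} really does recover the given $BP_*$-module structure via $BP_*\to V^A$, which is immediate from taking the tautological module structure on $V^A$.
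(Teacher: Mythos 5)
Your proof is correct and rests on the same key input, Theorem~\ref{unramified dissonance thm}, but you take a longer route to the contradiction than the paper does. The paper's entire proof is the observation that $V^A$ is not $p$-power-torsion: Theorem~\ref{unramified dissonance thm} forces $BP_*(X)$ to be $v_n$-power-torsion for \emph{all} $n\geq 0$, including $n=0$ (where $v_0=p$), and $V^A\cong A[v_1^A,v_2^A,\dots]$ is $p$-torsion-free as an abelian group because $A$ is a characteristic-zero discrete valuation ring. Your argument instead applies the power-torsion condition to $1\in V^A$, invokes reducedness of $V^A$ to upgrade ``nilpotent'' to ``zero,'' and then derives a contradiction by computing, via Propositions~\ref{computationofunramifiedgamma} and~\ref{low degree gamma computation}, that the image of $v_f$ in $V^A$ is the nonzero element $\frac{\pi_{A_{nr}}}{\pi_A}v_1^A$. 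That computation is correct, but it is unnecessary machinery: once you have deduced that the image of every $v_n$ in $V^A$ must be zero, you already have a contradiction at $n=0$, since $p\neq 0$ in $V^A$. In other words, you quietly skipped the easiest index. Both proofs are valid; the paper's is the minimal one, while yours illustrates the (true but not needed here) fact that $v_f$ survives to a non-nilpotent element of $V^A$.
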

\begin{proof}
It is not the case that $V^A$ is $p$-power-torsion.
\end{proof}

\begin{corollary}\label{unramified case and finite presentation}
Let $K/\mathbb{Q}_p$ be a finite extension extension of degree $>1$ which is not totally ramified.
Let $A$ denote the ring of integers of $K$.
Suppose $M$ is a graded $V^A$-module, and suppose that
$X$ is a spectrum such that $BP_*(X) \cong M$. 
Suppose that either
\begin{itemize}
\item $M$ is finitely presented as a $BP_*$-module, or
\item $M$ is finitely presented as a $V^A$-module, and $K/\mathbb{Q}_p$ is unramified.
\end{itemize}
Then $M\cong 0$.
\end{corollary}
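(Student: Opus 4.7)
The plan is to combine the dissonance theorem (Theorem~\ref{unramified dissonance thm}) with the structural constraint on finitely-presented modules over graded polynomial rings in Lemma~\ref{finite presentation and power-torsion}. Theorem~\ref{unramified dissonance thm} gives that $M \cong BP_*(X)$ is $v_n$-power-torsion for every $n \geq 0$, whereas Lemma~\ref{finite presentation and power-torsion} says that any nonzero finitely-presented graded module over an appropriately graded polynomial ring over a local ring must have some polynomial generator acting without power-torsion. The proof in each case is then just a matter of fitting these two facts together.

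In the first case ($M$ finitely presented over $BP_*$) we identify $BP_* = \mathbb{Z}_{(p)}[v_1, v_2, \dots]$ as a polynomial ring over the local ring $\mathbb{Z}_{(p)}$ whose generators $v_i$ sit in the strictly increasing grading degrees $|v_i| = 2(p^i - 1)$. Applied with $x_i = v_{i+1}$, Lemma~\ref{finite presentation and power-torsion} produces, if $M \neq 0$, some $v_n$ failing to act with power-torsion on $M$, directly contradicting Theorem~\ref{unramified dissonance thm}. Hence $M \cong 0$.

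In the second case ($M$ finitely presented over $V^A$, with $K/\mathbb{Q}_p$ unramified of degree $f$), the extra ingredient is Proposition~\ref{computationofunramifiedgamma}, which tells us that the structure map $BP_* \rightarrow V^A$ sends $v_{fn} \mapsto v_n^A$ and kills $v_i$ whenever $f \nmid i$. Thus the element $v_n^A$ acts on $M$ the same way that $v_{fn}$ does, so the $v_{fn}$-power-torsion provided by Theorem~\ref{unramified dissonance thm} transfers to $v_n^A$-power-torsion of $M$ for all $n \geq 1$. Now $V^A = A[v_1^A, v_2^A, \dots]$ is a polynomial ring over the local DVR $A$ whose generators sit in the strictly increasing degrees $|v_n^A| = 2(q^n - 1)$, where $q = |A/\pi A|$, so Lemma~\ref{finite presentation and power-torsion} applies again with $x_i = v_{i+1}^A$ and yields the same contradiction; hence $M \cong 0$. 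The main point worth flagging is why the second bullet requires $K/\mathbb{Q}_p$ unramified rather than merely not totally ramified: in a mixed extension, the map $BP_* \rightarrow V^A$ factors through $V^{A_{nr}}$, and the remaining totally ramified factor has no comparably clean closed-form description (cf.\ Proposition~\ref{low degree gamma computation}) that would let one transfer $v_n$-power-torsion over $BP_*$ directly to $v_n^A$-power-torsion over $V^A$. Apart from this routing issue, I do not expect any serious technical obstacle—the argument really is just an assembly of previously established pieces.
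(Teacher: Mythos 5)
Your argument is correct and is essentially the paper's own proof: Theorem~\ref{unramified dissonance thm} gives $v_n$-power-torsion for all $n$, and Lemma~\ref{finite presentation and power-torsion} (applied to $BP_*=\mathbb{Z}_{(p)}[v_1,v_2,\dots]$ in the first case, and to $V^A=A[v_1^A,v_2^A,\dots]$ with the identification $\gamma(v_{fn})=v_n^A$ from Proposition~\ref{computationofunramifiedgamma} in the second) forces $M\cong 0$. The added remark explaining why the second bullet needs the unramified hypothesis is a helpful gloss but is the same routing the paper uses implicitly.
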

\begin{proof}
By Theorem~\ref{unramified dissonance thm}, $M$ must be $v_n$-power-torsion for all $n\geq 0$.
Lemma~\ref{finite presentation and power-torsion} then implies that $M$ cannot be finitely presented as a $BP_*$-module unless $M \cong 0$.

If $K/\mathbb{Q}_p$ is unramified, then $V^A \cong A[v_{d}, v_{2d}, v_{3d}, \dots ]$ as
a $BP_*$-module, by Proposition~\ref{computationofunramifiedgamma}. Hence, if $M$ is a graded $V^A$-module which is $v_{n}$-power-torsion for all integers $n\geq 0$,
then by Lemma~\ref{finite presentation and power-torsion}, 
$M$ cannot be finitely presented  as a $V^A$-module unless $M\cong 0$.
\end{proof}

\begin{corollary}\label{suspension spectra and VA cor}
Let $K/\mathbb{Q}_p$ be a finite extension of degree $>1$, and which is not totally ramified.
Let $A$ denote the ring of integers of $K$.
Then the $BP$-homology of a {\em space} is never the underlying $BP_*$-module of a $V^A$-module, unless that
space is stably contractible.
\end{corollary}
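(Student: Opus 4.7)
The plan is to reduce the statement about spaces to a statement about suspension spectra and then combine Theorem~\ref{unramified dissonance thm} with the Hopkins-Ravenel harmonicity theorem quoted in Remark~\ref{dissonance remark}. Let $X$ be a space and let $\Sigma^\infty X$ be its suspension spectrum, so that $BP_*(\Sigma^\infty X) \cong BP_*(X)$ as $BP_*$-modules. If $BP_*(X)$ is the underlying $BP_*$-module of a $V^A$-module, then $BP_*(\Sigma^\infty X)$ is as well, so Theorem~\ref{unramified dissonance thm} applies and $\Sigma^\infty X$ is dissonant at $p$, that is, acyclic with respect to $\bigvee_{n\in \mathbb{N}} E(n)$.

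The next step is to invoke the Hopkins-Ravenel result of~\cite{MR1317578}, recalled in Remark~\ref{dissonance remark}: the $p$-localization $(\Sigma^\infty X)_{(p)}$ is harmonic, i.e., local with respect to $E = \bigvee_{n\in \mathbb{N}} E(n)$. Being dissonant at $p$ is exactly the statement of being $E$-acyclic after $p$-localization. A spectrum $Y$ which is both $E$-local and $E$-acyclic is necessarily contractible, by the standard argument: $E$-acyclicity of $Y$ together with $E$-locality of $Y$ forces $[Y, Y] = 0$, so $\mathrm{id}_Y = 0$, so $Y \simeq 0$. Applying this to $Y = (\Sigma^\infty X)_{(p)}$ shows that $\Sigma^\infty X$ is $p$-locally stably contractible, which is the sense in which ``stably contractible'' is meant here (we are working throughout at a fixed prime $p$).

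The main (small) obstacle is really nothing more than being careful about the $p$-local framing: Theorem~\ref{unramified dissonance thm} gives dissonance at $p$, while harmonicity in the sense of~\cite{MR1317578} is a $p$-local property, and the conclusion one gets is $p$-local stable contractibility rather than integral stable contractibility. Everything else is bookkeeping: once the two opposing properties (dissonant versus harmonic with respect to the same Bousfield class) are in hand, the conclusion is immediate from the general fact that a spectrum both local and acyclic for a given homology theory must vanish.
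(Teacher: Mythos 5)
Your proof is correct and takes essentially the same route as the paper: suspend, apply Theorem~\ref{unramified dissonance thm} to get dissonance, apply the Hopkins-Ravenel harmonicity theorem from~\cite{MR1317578}, and conclude contractibility because the only spectra that are both dissonant and harmonic are contractible. The one additional observation you make -- that the correct conclusion is $p$-local stable contractibility rather than integral stable contractibility, since $BP$ is $p$-local and harmonicity is a $p$-local property -- is a fair point and a worthwhile clarification; the paper elides this, leaving it implicit in the convention that one has fixed a prime $p$ throughout.
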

\begin{proof}
By~\cite{MR1317578}, the suspension spectrum of any space is harmonic. By Theorem~\ref{unramified dissonance thm},
if $BP_*(\Sigma^{\infty} X)$ is a $V^A$-module, then $\Sigma^{\infty} X$ is dissonant. The only spectra that are both dissonant and harmonic
are the contractible spectra.
\end{proof}

\begin{corollary}\label{suspension spectra and VA cor 2}
Let $K/\mathbb{Q}_p$ be a finite extension of degree $>1$, and which is not totally ramified.
Let $A$ denote the ring of integers of $K$.
If $Y$ is a $p$-local spectrum whose $BP$-homology is the underlying $BP_*$-module of a $V^A$-module, 
then every map from $Y$ to a suspension spectrum is nulhomotopic. In particular, the stable cohomotopy groups $\pi^*(Y)$ of $Y$ are all zero, and the Spanier-Whitehead dual $DY$ of $Y$ is contractible.
\end{corollary}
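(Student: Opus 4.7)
The plan is to combine Theorem~\ref{unramified dissonance thm} with the Hopkins--Ravenel result that suspension spectra are harmonic~\cite{MR1317578}. My first step is to apply Theorem~\ref{unramified dissonance thm} to conclude that $Y$ is dissonant at $p$, which means $Y$ is $E$-acyclic for $E = \bigvee_{n\geq 0} K(n)$. I then record the general observation that any map from a dissonant spectrum $D$ to a harmonic spectrum $H$ is nulhomotopic: simply factor through the $E$-localization, $[D, H] \cong [L_E D, H] = [\ast, H] = 0$.

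With that in hand, the first assertion is immediate: for any pointed space $X$, the suspension spectrum $\Sigma^\infty X$ is harmonic, so $[Y, \Sigma^\infty X] = 0$. For the stable cohomotopy statement, I observe that $\pi^n(Y) = [Y, \Sigma^n S]$, and $\Sigma^n S$ is a (de)suspension of the suspension spectrum $S = \Sigma^\infty S^0$, which is still harmonic (harmonicity being closed under suspension and desuspension, since it is a Bousfield-locality condition). For the Spanier--Whitehead dual, I use the function-spectrum/smash-product adjunction to identify
\[ \pi_n(DY) \cong [\Sigma^n Y, S], \]
and since dissonance is also preserved under (de)suspension, $\Sigma^n Y$ is dissonant; applying the general observation to any map from $\Sigma^n Y$ to the harmonic spectrum $S$ gives $\pi_n(DY) = 0$ for all $n$, so $DY \simeq \ast$.

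I do not anticipate any real obstacle here: all the substantive work has already been done in Theorem~\ref{unramified dissonance thm}, and this corollary is essentially a formal consequence of it together with the standard Bousfield-localization characterizations of ``dissonant'' and ``harmonic'' and the Hopkins--Ravenel harmonicity of suspension spectra.
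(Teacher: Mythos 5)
Your proposal is correct and takes essentially the same approach as the paper: apply Theorem~\ref{unramified dissonance thm} to see $Y$ is dissonant, invoke the Hopkins--Ravenel theorem that ($p$-local) suspension spectra are harmonic, and use the formal fact that any map from a $\bigvee_n K(n)$-acyclic spectrum to a $\bigvee_n K(n)$-local spectrum is null. The only difference is that you spell out the ``in particular'' clauses explicitly via $\pi^n(Y) = [Y,\Sigma^n S]$ and $\pi_n(DY) = [\Sigma^n Y, S]$, which the paper leaves implicit.
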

\begin{proof}
By~\cite{MR1317578}, the suspension spectrum of any space is harmonic. By Theorem~\ref{unramified dissonance thm},
if $BP_*(Y)$ is a $V^A$-module, then $Y$ is dissonant. 
Hence $Y$ is acyclic with respect to the wedge of the $p$-primary Johnson-Wilson theories, while any suspension spectrum $\Sigma^{\infty} X$ is local with respect to the same
wedge. Hence every map from $Y$ to $\Sigma^{\infty} X$ is nulhomotopic.
\end{proof}

\section{Nonrealizability in the totally ramified case.}\label{tot ram case}

Here is another observation, parallel to the one at the beginning of the previous section: 
the spectrum $H\mathbb{F}_p$ has the property that its $BP_*$-homology $BP_*(H\mathbb{F}_p)$,
as a graded $BP_*$-module, splits as a direct sum of suspensions of the residue field $\mathbb{F}_p$ of $BP_*.$
Consequently, if $A$ is the ring of integers in a totally ramified field extension of $\mathbb{Q}_p$, then 
$BP_*(H\mathbb{F}_p)$ admits the structure of a $V^A$-module, since the residue field of $V^A$ is $\mathbb{F}_{p}$, 
and we can simply let all the generators
$\pi, v_1^A, v_2^A, \dots$ of $V^A$ act trivially on $BP_*(H\mathbb{F}_p)$.

However, unlike the unramified situation, in the totally ramified setting 
there is another obvious source of spectra $X$ such that $BP_*(X)$ is a $V^A$-module:
if $d$ is the degree of the field extension $K/\mathbb{Q}_p$ of which $A$ is the ring of integers, then
the Eilenberg-Mac Lane spectrum $H\mathbb{Q}_p^{\vee d}$ admits the structure of a $V^A$-module, since
\[ BP_*(H\mathbb{Q}_p^{\vee d}) \cong (\mathbb{Q}_p\otimes_{\mathbb{Z}} BP_*)^{\oplus d} \cong p^{-1}V^A\]
as $BP_*$-modules, since the map $BP_*\otimes_{\mathbb{Z}_{(p)}} A \stackrel{\cong}{\longrightarrow} V^{\mathbb{Z}_{(p)}}\otimes_{\mathbb{Z}_{(p)}} A \rightarrow V^A$ is a rational isomorphism whenever $K/\mathbb{Q}_p$ is totally ramified, by Theorem~\ref{comparison map is a rational iso}.

So, by taking dissonant spectra and rational spectra, we can produce easy examples of spectra $X$ such that $BP_*(X)$ is the underlying
$BP_*(X)$-module of a $V^A$-module. These examples are not very interesting: again, one does not need chromatic localization methods 
to study rational spectra, and one cannot use chromatic localization methods to study dissonant spectra.
So one would like to know if there exist
any spectra $X$ which are not ``built from'' rational and dissonant spectra, and 
such that $BP_*(X)$ is the underlying $BP_*$-module of a $V^A$-module.

In this section we see that the answer is {\em no}: if $X$ is a spectrum and $BP_*(X)$ is the underlying $BP_*$-module of a $V^A$-module,
where $A$ is the ring of integers of some totally ramified field extension $K/\mathbb{Q}_p$, then 
$X$ is an extension of a rational spectrum by a dissonant spectrum (this is Theorem~\ref{main thm on tot ram nonexistence}).
This tells us that $BP_*(X)$ cannot be a $V^A$-module,
for $K/\mathbb{Q}_p$ totally ramified, unless $X$ has some rather pathological properties; 
for example, if $BP_*(X)$ is torsion-free as an abelian group, then it cannot be finitely-presented
as a $BP_*$-module or as a $V^A$-module (this is Theorem~\ref{torsion-free and local coh not fin pres}), and by 
Corollary~\ref{tot ram corollary for spaces}, the $BP$-homology of a {\em space} is 
not a $V^A$-module unless that space is stably rational, i.e., its stable
homotopy groups are $\mathbb{Q}$-vector spaces. Finally, Corollary~\ref{nonrealizability of VA} gives us that $V^A$ itself is not the $BP$-homology of any spectrum.

First, we need a little bit of easy algebra, Lemmas~\ref{quotient of power torsion is power torsion}, \ref{coproduct of power torsion is power torsion}, \ref{localization preserves power torsion property}, \ref{power torsion preserved under extension}, and \ref{modules over eventually div algs are eventually div}:
\begin{lemma}\label{quotient of power torsion is power torsion}
Let $R$ be a commutative graded ring, $r\in R$ a homogeneous element,
and let $M$ be a graded $R$-module which is $r$-power-torsion.
\begin{itemize}
\item If $M \stackrel{f}{\longrightarrow} N$ be a surjective homomorphism of graded $R$-modules, then $N$ is $r$-power-torsion.
\item If $N \stackrel{f}{\longrightarrow} M$ is an injective homomorphism of graded $R$-modules, then $N$ is $r$-power-torsion.
\end{itemize}
\end{lemma}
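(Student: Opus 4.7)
The plan is to verify each bullet directly from the definition of $r$-power-torsion, by transporting a witness of torsion across the homomorphism $f$. Both cases are straightforward diagram chases, and I expect no genuine obstacle; the only care needed is to remain within the homogeneous setting.

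For the surjective case, I would start with an arbitrary homogeneous element $y \in N$ and use surjectivity of $f$ to choose a preimage $x \in M$. Since $f$ is a map of graded modules, I may assume $x$ is homogeneous of the appropriate degree (by projecting to the relevant graded piece). The hypothesis that $M$ is $r$-power-torsion supplies a nonnegative integer $m$ with $r^m x = 0$, and then $R$-linearity of $f$ gives
\[ r^m y = r^m f(x) = f(r^m x) = 0, \]
which is exactly the required torsion condition on $y$.

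For the injective case, I would again start with a homogeneous $y \in N$, form the homogeneous element $f(y) \in M$, and use the $r$-power-torsion hypothesis on $M$ to find $m \geq 0$ with $r^m f(y) = 0$. Then $R$-linearity of $f$ gives $f(r^m y) = r^m f(y) = 0$, and injectivity of $f$ forces $r^m y = 0$. This exhausts the content of the lemma.
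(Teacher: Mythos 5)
Your proof is correct; the paper itself offers no argument beyond the remark ``Very easy,'' and the direct diagram chase you give is exactly the standard verification that was being left to the reader. Both bullets are handled properly: transporting the torsion exponent forward along a surjection, and pulling it back along an injection via $R$-linearity and then cancelling with injectivity.
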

\begin{proof}
Very easy.
\end{proof}

\begin{lemma}\label{coproduct of power torsion is power torsion}
Let $R$ be a commutative graded ring, and let $r\in R$ be a homogeneous element.
Then any direct sum of $r$-power-torsion graded $R$-modules is also $r$-power-torsion.
\end{lemma}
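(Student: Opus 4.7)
The plan is to unpack the definition of $r$-power-torsion, exploit the fact that an element of a direct sum has only finitely many nonzero components, and then take a maximum over a finite set.

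More precisely, let $\{M_i\}_{i\in I}$ be a family of graded $R$-modules, each of which is $r$-power-torsion, and let $M = \bigoplus_{i\in I} M_i$. I would take an arbitrary homogeneous element $x \in M$ and write it in coordinates as $x = (x_i)_{i\in I}$ where $x_i \in M_i$ is homogeneous of the same degree as $x$, and all but finitely many $x_i$ are zero. Let $J \subseteq I$ be the finite subset indexing the nonzero $x_i$.

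For each $j \in J$, the hypothesis that $M_j$ is $r$-power-torsion supplies a nonnegative integer $m_j$ with $r^{m_j} x_j = 0$. Since $J$ is finite, the integer $m = \max_{j\in J} m_j$ is well-defined, and then $r^m x_j = r^{m - m_j}\cdot r^{m_j} x_j = 0$ for every $j \in J$ (and trivially for $j \notin J$ since $x_j = 0$). Hence $r^m x = 0$, showing that $M$ is $r$-power-torsion.

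There is no real obstacle here; the only subtle point is the familiar one that the statement would fail for direct \emph{products} (where an element can have infinitely many nonzero coordinates and the exponents $m_j$ might be unbounded), but for direct sums the finite support condition makes the argument immediate.
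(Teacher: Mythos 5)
Your proof is correct and takes essentially the same approach as the paper: both arguments observe that a homogeneous element of the direct sum has finite support, kill each nonzero component with some power of $r$, and then take the maximum of the finitely many exponents. No meaningful difference.
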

\begin{proof}
Let $S$ be a set and, for each $s\in S$, let $M_s$ be an $r$-power-torsion graded $R$-module.
Any homogeneous element $m$ of the direct sum $\coprod_{s\in S} M_s$ has only finitely many nonzero components;
for each such component $m_s$, choose a nonnegative integer $i_s$ such that $r^{i_s}m_s = 0$.
Then the maximum $i =\max\{ i_s: s\in S\}$ satisfies $r^i m_s = 0$ for all $s\in S$, hence $r^i m = 0$.
Hence $\coprod_{s\in S} M_s$ is $r$-power-torsion.
\end{proof}

\begin{lemma}\label{localization preserves power torsion property}
Let $R$ be a commutative graded ring, $r,s\in R$ homogeneous elements,
and let $M$ be a graded $R$-module which is $r$-power-torsion.
Then $s^{-1}M$ is $r$-power-torsion.
\end{lemma}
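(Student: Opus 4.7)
The plan is a direct unwinding of the definitions. A homogeneous element of the localization $s^{-1}M \cong M\otimes_R R[s^{-1}]$ can be represented as a fraction $m/s^k$ for some homogeneous $m\in M$ and some nonnegative integer $k$ (with appropriate grading). Since $M$ is $r$-power-torsion, there exists a nonnegative integer $i$, depending on $m$, such that $r^i m = 0$ in $M$.

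Then I would compute in $s^{-1}M$:
\[ r^i\cdot (m/s^k) = (r^i m)/s^k = 0/s^k = 0. \]
Since this works for an arbitrary homogeneous element of $s^{-1}M$, we conclude that $s^{-1}M$ is $r$-power-torsion. No step here should present any real obstacle: the argument is simply that localization commutes with the action of $r$, so any element killed by a power of $r$ before localizing remains killed by that same power of $r$ after localizing. The only mild subtlety is the bookkeeping of gradings, i.e., checking that fractions $m/s^k$ genuinely generate $s^{-1}M$ as a set of homogeneous elements, but this is immediate from the standard construction of the localization of a graded module at a homogeneous element.
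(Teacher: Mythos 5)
Your argument is correct, but it takes a different route than the paper does. You work directly with the fraction description of the localization: every homogeneous element of $s^{-1}M$ has a representative $m/s^k$, and since $r$ commutes with the localization map, $r^i m = 0$ immediately forces $r^i(m/s^k) = 0$. The paper instead presents $s^{-1}M$ as the cokernel of the shift map $\id - T$ on a countable coproduct $\coprod_{n\in\mathbb{N}} M\{e_n\}$, then invokes its Lemma on coproducts of power-torsion modules (Lemma~\ref{coproduct of power torsion is power torsion}) and its Lemma on quotients of power-torsion modules (Lemma~\ref{quotient of power torsion is power torsion}) to conclude. Your version is shorter and more elementary, requiring no auxiliary lemmas; the paper's version has the merit of slotting the localization into the same coproduct-and-quotient machinery it has just finished developing, at the cost of routing through a slightly heavier presentation of the localization. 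Both are complete and correct, and the grading bookkeeping you flag is indeed immediate since $s$ is homogeneous, so the fraction $m/s^k$ is homogeneous of degree $\deg m - k\deg s$.
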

\begin{proof}
I will write $\coprod_{n\in\mathbb{N}} M\{ e_n\}$ to mean the countably infinite coproduct $\coprod_{n\in\mathbb{N}} M$ but equipped with a choice of
labels $e_0, e_1, e_2, \dots$ for the summands of $\coprod_{n\in\mathbb{N}} M$.

The module $s^{-1}M$ is isomorphic to the cokernel of the map
\[ \coprod_{n\in\mathbb{N}} M\{ e_n\} \stackrel{\id - T}{\longrightarrow} \coprod_{n\in\mathbb{N}} M\{ e_n\}\]
where $T$ is the map of graded $R$-modules
given, for each $m\in \mathbb{N}$, on the component $M\{ e_m\}$ by the composite map
\[ M\{ e_m\} \stackrel{s}{\longrightarrow} M\{ e_m\} \stackrel{\cong}{\longrightarrow} M\{e_{m+1}\} \stackrel{i}{\longrightarrow} \coprod_{n\in\mathbb{N}} M\{ e_n\},\]
where $i$ is the direct summand inclusion map. 

Now $\coprod_{n\in\mathbb{N}} M\{ e_n\}$ is $r$-power-torsion by Lemma~\ref{coproduct of power torsion is power torsion},
so $s^{-1}M$ is a quotient of an $r$-power-torsion module.
So $s^{-1}M$ is $r$-power-torsion by Lemma~\ref{quotient of power torsion is power torsion}.
\end{proof}

\begin{lemma}\label{power torsion preserved under extension}
Let $R$ be a graded ring, and let $r\in R$ be a homogeneous element.
Let \[ 0 \rightarrow M^{\prime}\stackrel{f}{\longrightarrow} M \stackrel{g}{\longrightarrow} M^{\prime\prime}  \rightarrow 0\]
be a short exact sequence of graded $R$-modules, and suppose that
$M^{\prime}$ and $M^{\prime\prime}$ are both $r$-power-torsion. Then $M$ is also $r$-power-torsion.
\end{lemma}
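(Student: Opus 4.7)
The plan is to do a standard two-step diagram chase, pushing a homogeneous element $x\in M$ forward to $M''$, killing its image there by a power of $r$, then lifting back into $M'$ and killing the lift by another power of $r$.

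More precisely, given a homogeneous $x\in M$, first I would consider $g(x)\in M''$. Because $M''$ is assumed $r$-power-torsion, there exists $n\in\mathbb{N}$ with $r^n g(x)=0$, i.e., $g(r^n x)=0$. By exactness of the sequence at $M$, the element $r^n x$ lies in the image of $f$, so there exists a (necessarily homogeneous, since $f$ is a graded map and $r^n x$ is homogeneous) element $y\in M'$ with $f(y)=r^n x$.

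Next, since $M'$ is $r$-power-torsion, there exists $m\in\mathbb{N}$ with $r^m y = 0$. Applying $f$ and using $R$-linearity gives
\[ 0 = f(r^m y) = r^m f(y) = r^m \cdot r^n x = r^{m+n} x. \]
Thus every homogeneous $x\in M$ is annihilated by some power of $r$, which is exactly the definition of $M$ being $r$-power-torsion.

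This is entirely formal; there is no real obstacle, and the only mild point to mention explicitly in the write-up is that one should keep track of homogeneity of the lift $y$, which is automatic because $f$ is a map of graded $R$-modules and the preimage of a homogeneous element under an injection of graded modules is homogeneous. No tools beyond exactness and $R$-linearity are needed, and the argument makes no use of $R$ being commutative or of any finiteness hypothesis.
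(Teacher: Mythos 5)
Your proof is correct and is the same two-step diagram chase the paper uses: push $x$ to $M''$, kill the image by a power of $r$, lift $r^n x$ to $M'$ via exactness, kill the lift by another power, and apply $f$. The only cosmetic difference is that the paper does not bother to remark on homogeneity of the lift, but nothing else differs.
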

\begin{proof}
Let $m\in M$. Then $g(r^n m) = r^ng(m) = 0$ for some nonnegative integer $n$ since $M^{\prime\prime}$ is $r$-power-torsion, so there exists some $\tilde{m}\in M^{\prime}$ such that $f(\tilde{m}) = r^nm$. Then $r^{\ell}\tilde{m} = 0$ for some nonnegative integer $\ell$, since $M^{\prime}$
is $r$-power-torsion. So $0 = f(r^{\ell}\tilde{m}) = r^{\ell}r^nm = r^{\ell+n}m$. So $M$ is $r$-power-torsion.
\end{proof}

\begin{lemma}\label{modules over eventually div algs are eventually div}
Let $R,S$ be a commutative graded ring, let $I$ be a homogeneous ideal of $R$, let $S$ be a commutative graded $R$-algebra, and let $r,r^{\prime}$ be homogeneous elements of $R$ such that $r$ acts with eventual $r^{\prime}$-division on $S$ modulo $I$. Then, for every graded $S$-module $M$, $r$ acts with eventual $r^{\prime}$-division on $S$ modulo $I$.
\end{lemma}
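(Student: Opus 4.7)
The plan is to reduce to the hypothesis by evaluating it at the unit $1 \in S$ and then transporting the resulting congruence to $M$ via the scalar action of $S$. (I am reading the conclusion as asserting that $r$ acts with eventual $r'$-division on $M$ modulo $I$; as literally written, the statement simply repeats the hypothesis, so this is presumably a typographical slip.)

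Concretely, fix a homogeneous $x \in M$; the goal is to produce a nonnegative integer $n$ and a $y \in M$ with $r' y \equiv r^n x \pmod{IM}$. First, apply the hypothesis to the homogeneous element $1 \in S$ to obtain an integer $n \geq 0$ and a homogeneous $s \in S$ with $r's - r^n \in IS$, so that $r's - r^n = \sum_j i_j s_j$ for finitely many homogeneous $i_j \in I$ and $s_j \in S$ of matching degrees. Then set $y := s\,x \in M$ and compute
\[
 r' y - r^n x \;=\; (r's - r^n)\, x \;=\; \sum_j i_j\, (s_j x),
\]
which lies in $IM$ because each $s_j x$ belongs to $M$ and each $i_j$ belongs to $I$. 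This is exactly the witness of eventual $r'$-division required by Definition~\ref{def of eventual div}.

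There is essentially no serious obstacle: the content of the argument is simply that a congruence $r's \equiv r^n$ modulo $IS$ in $S$ may be multiplied by an arbitrary $x \in M$ to produce $r'(sx) \equiv r^n x$ modulo $IM$. The one point worth being careful about is homogeneity of the chosen $s$ and $s_j$ (so that the products $sx$ and $s_j x$ sit in the correct graded pieces of $M$), but this is built into Definition~\ref{def of eventual div}, which quantifies only over homogeneous elements. Consequently the proof should fit in a few lines.
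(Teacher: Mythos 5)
Your proof is correct and matches the paper's own argument essentially verbatim: both apply the hypothesis to $1\in S$ to get $r's\equiv r^n\pmod{IS}$ and then multiply by $x$ to conclude $r'(sx)\equiv r^n x\pmod{IM}$. You are also right that the conclusion in the statement contains a typo and should read ``on $M$'' rather than ``on $S$.''
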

\begin{proof}
Choose some nonnegative integer $n$ and homogeneous element $y\in S$ such that $r^{\prime} y \equiv r^n 1$ modulo $IS$.
Then, for each homogeneous element $x$ of $M$, the element $yx\in M$ satisfies
$r^{\prime}(yx) \equiv r^n x$ modulo $IM$.
\end{proof}

To my mind, Lemma~\ref{localization and eventual division} is a curious fact: 
it implies that, for example, the graded $BP_*$-module $BP_*/(p, v_2-v_1^{p+1})$ is not the $BP$-homology of any spectrum, and more generally,
$BP_*/(p^a, v_2^b - v_1^{(p+1)b})$ is not the $BP$-homology of any spectrum for any positive integers $a,b$. While there is a tremendous wealth of graded $BP_*$-modules, Lemma~\ref{localization and eventual division} implies that vastly many of them are not realizable as the $BP$-homology of any spectrum.
\begin{lemma}\label{localization and eventual division}
Suppose that $p$ is a prime and $n$ is a positive integer, and suppose that 
$X$ is a $p$-local spectrum such that $BP_*(X)$ is $v_i$-power-torsion for $i=0, 1, \dots ,n-1$.
Suppose that 
$v_{n+1}$ acts with eventual $v_n$-division on $BP_*(X)$ modulo $I_n$.
Then $BP_*(X)$ is $v_n$-power-torsion, and 
the Bousfield localization $L_{E(n)}X$ is contractible.
\end{lemma}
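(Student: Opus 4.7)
Since $BP_*(X)$ is $v_{n-1}$-power-torsion, Theorem~\ref{ravenels local conj} gives $BP_*(L_{E(n)}X) \cong v_n^{-1}BP_*(X)$, so by Lemma~\ref{acyclicity lemma} the two conclusions of the lemma are equivalent and it suffices to prove that $M := BP_*(X)$ is $v_n$-power-torsion. My plan is to exploit the $BP_*BP$-comodule structure on $M$ and apply Landweber's invariant prime theorem: writing $H^0 \subseteq M$ for the $v_n$-power-torsion submodule, the first step is to verify that $H^0$ is a sub-$BP_*BP$-comodule of $M$, which should follow from the congruence $\eta_R(v_n) \equiv v_n \pmod{I_n BP_*BP}$ together with the $I_n$-power-torsion of $M$. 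Then $M' := M/H^0$ is a $BP_*BP$-comodule that is still $I_n$-power-torsion, has $v_n$ acting injectively, and inherits the eventual $v_n$-division hypothesis.

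The main step is to show $M' = 0$. By Landweber's theorem, every associated prime of a nonzero $BP_*BP$-comodule is one of the invariant primes $I_k = (p, v_1, \ldots, v_{k-1})$ for some $k \in \{0, 1, \ldots, \infty\}$, and I would rule out each in turn: primes $I_k$ with $k < n$ by the $I_n$-power-torsion of $M'$, primes $I_k$ with $k > n$ by $v_n$-injectivity (such $I_k$ contains $v_n$), and $I_n$ itself by the hypothesis. For the last, an associated prime $I_n$ would produce $m \in M'$ with $\mathrm{ann}(m) = I_n$, realizing $BP_*/I_n \cong \mathbb{F}_p[v_n, v_{n+1}, \ldots]$ as a sub-$BP_*$-module via $1 \mapsto m$; iterating the hypothesis on $m$ yields, for each $k \ge 1$, integers $J_k \ge 1$ and elements $\bar y_k \in M'$ and $\bar z_k \in I_n M'$ with $v_n^k \bar y_k = v_{n+1}^{J_k} m + \bar z_k$, so reducing modulo $I_n M'$ would place $v_{n+1}^{J_k} \bar m$ inside $v_n^k (M'/I_n M')$ for every $k$. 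Since $v_{n+1}^{J_k}$ is not $v_n$-divisible in $\mathbb{F}_p[v_n, v_{n+1}, \ldots]$, this should yield the desired contradiction, after which the nonexistence of associated primes forces $M' = 0$.

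The most delicate part will be ensuring that the apparent $v_n$-divisibility of $v_{n+1}^{J_k} \bar m$ inside the ambient module $M'/I_n M'$ actually forces $v_n$-divisibility inside the cyclic sub-$(BP_*/I_n)$-module generated by $\bar m$, since a priori the witness $v_n^k \bar y_k$ could lie outside $BP_* \bar m$. I anticipate that this requires either upgrading the embedding $BP_* m \hookrightarrow M'$ to a sub-comodule embedding of $BP_*/I_n$, which would let the computation take place inside $BP_*/I_n$ directly, or invoking the local cohomology machinery of the appendix to pin down the graded and $I_n$-adic structure of $M'$ carefully enough to transfer divisibility from the ambient module back to the cyclic submodule.
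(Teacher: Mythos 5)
Your proposed argument is genuinely different from the paper's. The paper works topologically: it uses the Miller--Ravenel change-of-rings isomorphism and a filtration spectral sequence to show that $\Ext_{(BP_*,BP_*BP)}(BP_*, v_n^{-1}BP_*(X))$ vanishes, then identifies this with the $E_2$-term of the $E(n)$-Adams spectral sequence, invokes the Hopkins--Ravenel smashing theorem (Theorem~\ref{smashing conj}) to conclude $L_{E(n)}X \simeq 0$, and only at the very end deduces $v_n$-power-torsion via Lemma~\ref{acyclicity lemma}. You instead try to prove the $v_n$-power-torsion statement purely algebraically via Landweber's invariant prime ideal theorem and then obtain the topological statement afterward. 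That is an attractive reversal, but it has two real gaps.

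The first gap is the final inference ``nonexistence of associated primes forces $M' = 0$.'' This is true over Noetherian rings but fails over $BP_* = \mathbb{Z}_{(p)}[v_1, v_2, \dots]$, which is not Noetherian. For instance, the module $k[x_1,x_2,\dots]/(x_1^2, x_2^2, \dots)$ over $k[x_1,x_2,\dots]$ is nonzero and has no associated primes: the annihilator of any nonzero element is a non-prime ideal, since the only candidates for a prime annihilator would have to contain all the $x_i$, and no nonzero element of this module is killed by every $x_i$. Landweber's theorem does tell you that any prime that \emph{is} the annihilator of an element of a $BP_*BP$-comodule must be invariant, but it does not say that a nonzero comodule must have an associated prime. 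You would need a further structural theorem about $BP_*BP$-comodules (perhaps via the Landweber filtration theorem applied to a filtered system of finitely presented subcomodules) to make this step go through, and such a result is not supplied by anything cited in the paper.

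The second gap is exactly the one you flag as ``delicate'': at the prime $I_n$, the hypothesis gives you $v_n y \equiv v_{n+1}^k m$ modulo $I_n M'$, but nothing constrains $y$ to lie in the cyclic submodule $BP_* m$, and $I_n M' \cap BP_* m$ may strictly contain $I_n \cdot (BP_* m)$, so the apparent $v_n$-divisibility in $M'/I_n M'$ does not transfer to $BP_*/I_n$. Your proposed fixes (upgrading $BP_* m$ to a subcomodule, or invoking the local cohomology appendix) are not worked out, and the first one will not literally work since $BP_* m$ is generally not a subcomodule. The upshot is that while the algebraic strategy is plausible in spirit, both the existence of an associated prime and the elimination of $I_n$ require substantial work beyond what is written, and the paper avoids both issues by routing the argument through the $E(n)$-Adams spectral sequence and the smashing conjecture instead.
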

\begin{proof}
First, since $I_n$ acts trivially on $v_n^{-1}BP_*(X)/I_nBP_*(X)$, 
the Miller-Ravenel change-of-rings isomorphism (see section 6.1 of~\cite{MR860042}, or~\cite{MR0458410} for the original reference),
gives us an isomorphism of bigraded $\mathbb{Z}_{(p)}$-modules
\begin{align*}
 &  \Ext_{(BP_*,BP_*BP)}^{*,*}(BP_*,v_n^{-1}BP_*(X)/I_nBP_*(X)) \\
  & \cong  \Ext_{(E(n)_*,E(n)_*E(n))}^{*,*}(E(n)_*,(BP_*(X)/I_nBP_*(X))\otimes_{BP_*} E(n)_*). \end{align*}
Now 
\[ E(n)_* \cong v_n^{-1}BP_*/(v_{n+1}, v_{n+2}, \dots ),\]
hence 
\begin{align*} (BP_*(X)/I_nBP_*(X))\otimes_{BP_*}E(n)_* &\cong v_n^{-1}BP_*(X)/(p, v_1, \dots ,v_{n-1}, v_{n+1}, v_{n+2}, \dots )BP_*(X).
\end{align*}

Now if $v_n$ is assumed to act with eventual $v_{n+1}$-division on $BP_*$ modulo $I_n$, then
for any given element of 
$BP_*(X)/(p, v_1, \dots ,v_{n-1}, v_{n+1}, v_{n+2}, \dots )BP_*(X)$ 
there is some power of $v_n$ that acts trivially on that element,
and hence 
\begin{align} \nonumber v_n^{-1}BP_*(X)/(p, v_1, \dots ,v_{n-1}, v_{n+1}, v_{n+2}, \dots )BP_*(X) &\cong 0,\mbox{\ \ so} \\
\label{iso 100000}\Ext_{(BP_*,BP_*BP)}^{*,*}(BP_*,v_n^{-1}BP_*(X)/I_nBP_*(X))&\cong 0.\end{align}

I claim that this implies that $\Ext_{(BP_*,BP_*BP)}^{*,*}(BP_*,v_n^{-1}BP_*(X))$ also 
vanishes; here is a proof.
For each integer $j$ such that $0 < j \leq n$,
filter $v_n^{-1}BP_*(X)/I_j$ by kernels of multiplication by powers of $v_j$:
let $F^iBP_*(X)$ be the kernel of the map
\begin{equation}\label{comodule map 1} v_j^i : v_n^{-1}BP_*(X)/I_j\rightarrow v_n^{-1}BP_*(X)/I_j,\end{equation}
so that we have the increasing filtration
\begin{equation}\label{comodule filtration 1} 0 = F^0v_n^{-1}BP_*(X)/I_j \subseteq F^1v_n^{-1}BP_*(X)/I_j \subseteq F^2v_n^{-1}BP_*(X)/I_j \subseteq \dots .\end{equation}
The filtration~\ref{comodule filtration 1} is separated/Hausdorff, since $\cap_i F^iBP_*(X) = 0$, and
complete as well, since $\lim_{i\rightarrow-\infty} (BP_*(X)/I_nBP_*(X))/F^i(BP_*(X)/I_nBP_*(X)) \cong BP_*(X)/I_nBP_*(X)$.
Furthermore, the map~\ref{comodule map 1} is a $BP_*BP$-comodule homomorphism, so filtration~\ref{comodule filtration 1} is a filtration of
$v_n^{-1}BP_*(X)/I_j$ by sub-$BP_*BP$-comodules.
We assumed that $BP_*(X)$ is $v_i$-power-torsion for $i=0, 1, \dots , n-1$,
so Lemma~\ref{quotient of power torsion is power torsion} and Lemma~\ref{localization preserves power torsion property} together imply that 
$v_n^{-1}BP_*(X)/I_j$ is $v_i$-power-torsion for $i=0, 1, \dots, n-1$ and for all $j$. Hence filtration~\ref{comodule filtration 1} is exhaustive. 

Now the usual functoriality properties of the cobar complex $C^{\bullet}_{(A,\Gamma)}(M)$ of a $\Gamma$-comodule $M$ then tell us that
\[ C^{\bullet}_{(BP_*,BP_*BP)}(F^0BP_*(X)) \subseteq 
 C^{\bullet}_{(BP_*,BP_*BP)}(F^1BP_*(X)) \subseteq 
 C^{\bullet}_{(BP_*,BP_*BP)}(F^2BP_*(X)) \subseteq \dots \]
is a complete, exhaustive, and separated/Hausdorff filtration on 
the cobar complex \linebreak $C^{\bullet}_{(BP_*,BP_*BP)}(v_n^{-1}BP_*(X)/I_j)$
computing $\Ext^{*,*}_{(BP_*,BP_*BP)}(BP_*, v_n^{-1}BP_*(X)/I_j)$.

Hence, from e.g. Theorem 9.2 of~\cite{MR1718076}, , we get a
conditionally convergent spectral sequence
\begin{align}\label{filtration ss 1} E_1^{s,t,u} & \cong \Ext^{s,u}_{(BP_*,BP_*BP)}(BP_*, (F^tv_n^{-1}BP_*(X)/I_j)/(F^{t-1}v_n^{-1}BP_*(X)/I_j)) \\
 &\Rightarrow \Ext^{s,u}_{(BP_*,BP_*BP)}(BP_*,v_n^{-1}BP_*(X)/I_j).\end{align}
We have an isomorphism of graded $BP_*BP$-comodules
\begin{equation}\label{iso 100001} (F^tv_n^{-1}BP_*(X)/I_j)/(F^{t-1}v_n^{-1}BP_*(X)/I_j) \cong v_n^{-1}BP_*(X)/I_{j+1}\end{equation}
for all $t$,
and consequently the $E_1$-term of the spectral sequence~\ref{filtration ss 1} is isomorphic (up to appropriate regrading) to a direct sum of 
countably infinitely many copies of \linebreak
$\Ext^{*,*}_{(BP_*,BP_*BP)}(BP_*, v_n^{-1}BP_*(X)/I_{j+1})$.

Now we just need an easy downward induction: 
isomorphisms~\ref{iso 100000} and~\ref{iso 100001} tell us that the $E_1$-term of spectral sequence~\ref{filtration ss 1} vanishes when $j=n-1$.
That was the initial step. For the inductive step,
suppose we already know that 
\[ \Ext^{*,*}_{(BP_*,BP_*BP)}(BP_*,v_n^{-1}BP_*(X)/I_j) \cong 0\]
for some $j\geq 0$; then spectral sequence~\ref{filtration ss 1} and
isomorphism~\ref{iso 100001} tell us that
\[\Ext^{*,*}_{(BP_*,BP_*BP)}(BP_*,v_n^{-1}BP_*(X)/I_{j-1}) \cong 0\] as well.
By downward induction, 
\[ \Ext^{*,*}_{(BP_*,BP_*BP)}(BP_*,v_n^{-1}BP_*(X)/I_0) \cong 0,\]
i.e. (since $I_0 = (0)$), 
\begin{equation}\label{iso 100002}\Ext^{*,*}_{(BP_*,BP_*BP)}(BP_*,v_n^{-1}BP_*(X)) \cong 0.\end{equation}

Now since $BP_*(X)$ is assumed to be $v_{n-1}$-power-torsion, we have that $v_{n-1}^{-1}BP_*(X)\cong 0$,
and hence Theorem~\ref{ravenels local conj}
gives us that $BP_*(L_{E(n)}X) \cong v_n^{-1}BP_*(X)$. 
Using triviality of the $\Ext$-group~\ref{iso 100002} together with another Morava-Miller-Ravenel change-of-rings
(since $BP_*(X)$ is $I_n$-nil, i.e., $v_i$-power-torsion for all $i < n$), we have
\begin{align} 0 
\nonumber & \cong  \Ext^{*,*}_{(BP_*,BP_*BP)}(BP_*, v_n^{-1}BP_*(X)) \\
\nonumber & \cong  \Ext^{*,*}_{(BP_*,BP_*BP)}(BP_*, BP_*(L_{E(n)}X)) \\
\nonumber & \cong  \Ext^{*,*}_{(E(n)_*,E(n)_*E(n))}(E(n)_*, BP_*(L_{E(n)}X)\otimes_{BP_*} E(n)_*) \\
\label{iso 100003} & \cong  \Ext^{*,*}_{(E(n)_*,E(n)_*E(n))}(E(n)_*, E(n)_*(L_{E(n)}X)) \\
\label{iso 100004} & \cong  \Ext^{*,*}_{(E(n)_*,E(n)_*E(n))}(E(n)_*, E(n)_*(X)) ,\end{align}
with isomorphism~\ref{iso 100003} due to $E(n)_*$ being a Landweber-exact $BP_*$-module.
The bigraded $\mathbb{Z}_p$-module~\ref{iso 100004} is the $E_2$-term of the $E(n)$-Adams spectral sequence
converging to $\hat{X}_{E(n)}$, the $E(n)$-nilpotent completion of $X$.
Hence $\hat{X}_{E(n)}$ is contractible. 
Consequently, by Theorem~\ref{smashing conj},
$0 \simeq \hat{X}_{E(n)} \simeq L_{E(n)}X$, as claimed.
Since the $E(n)$-localization of $X$ is contractible, Lemma~\ref{acyclicity lemma} gives us that $BP_*(X)$ is $v_n$-power-torsion.
\end{proof}

\begin{lemma}\label{eventual divisibility of VA-modules}
Let $K/\mathbb{Q}_p$ be a finite, totally ramified extension of degree greater than one.
Then, for each integer $n>1$, 
the element $v_{n+1}\in BP_*$ acts with eventual division by $v_n$ on $V^A$ modulo $I_n$. 
Furthermore, if the degree $[K : \mathbb{Q}_p]$ of the field extension is greater than two, then $v_{2}\in BP_*$ acts with eventual division by $v_1$ on $V^A$ modulo $I_1$.
\end{lemma}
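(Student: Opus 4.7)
The plan is first to convert the eventual divisibility statement into a concrete nilpotency claim. By Lemma~\ref{modules over eventually div algs are eventually div} it suffices to verify the property for $V^A$ itself, and commutativity of $V^A$ reduces the case of general $x$ to $x=1$, so I need to produce $k\geq 0$ and $y\in V^A$ with $\bar{v}_n y \equiv \bar{v}_{n+1}^k \bmod I_nV^A$ (writing $\bar{v}_i := \gamma(v_i)$). Rearranging, this is equivalent to $\bar{v}_{n+1}^k \in I_{n+1}V^A = (p,\bar{v}_1,\ldots,\bar{v}_n)V^A$, i.e., to $\bar{v}_{n+1}$ being nilpotent in the quotient ring $V^A/I_{n+1}V^A$.

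Next I would combine the defining identities $p\ell_n = \sum_{i=0}^{n-1}\ell_iv_{n-i}^{p^i}$ in $BP_*$ and $\pi\ell_n^A = \sum_{i=0}^{n-1}\ell_i^A(v_{n-i}^A)^{p^i}$ in $V^A$, using that $\gamma$ preserves logarithm coefficients (Remark~\ref{remark on log coefficients}), to derive
\[ \bar{v}_{n+1} \;=\; \tfrac{p}{\pi}\,v_{n+1}^A + R_{n+1}. \]
A grading count shows that for $n\geq 2$ the remainder $R_{n+1}$ lies in $(v_1^A,\ldots,v_{n-1}^A)V^A$ (the only monomial in $v_n^A,v_{n+1}^A$ alone of the correct degree is $v_{n+1}^A$), while for $n=1$, $R_2 = c\cdot(v_1^A)^{p+1}$ for the explicit coefficient $c = \pi^{d-2}u_3 - \pi^{dp-p-1}u_4$ furnished by Proposition~\ref{low degree gamma computation}. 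Since $K/\mathbb{Q}_p$ is totally ramified of degree $d$, $p/\pi = \pi^{d-1}u_0$ is a positive $\pi$-power times a unit, and $p = \pi^d u_0\in I_{n+1}V^A$ implies $\pi^d\equiv 0$ in the quotient.

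With these formulas the nilpotency of $\bar{v}_{n+1}$ in $V^A/I_{n+1}V^A$ splits into two pieces. The ``leading'' piece $\pi^{d-1}u_0v_{n+1}^A$ is easy: its square $\pi^{2(d-1)}u_0^2(v_{n+1}^A)^2$ vanishes because $2(d-1)\geq d$ for $d\geq 2$, so $\pi^{2(d-1)}\in (\pi^d) = (p)\subseteq I_{n+1}V^A$. For the remainder, I would argue nilpotency of $R_{n+1}$ in $V^A/I_{n+1}V^A$ using the relations $\bar{v}_j\equiv 0$ for $1\leq j\leq n$. In the $n=1,\,d>2$ case, the coefficient $c$ carries a positive $\pi$-factor $\pi^{d-2}$; raising $R_2$ to a suitable power and combining with $\pi^{d-1}v_1^A = u_1^{-1}\bar{v}_1\equiv 0$ and $\pi^d\equiv 0$ forces $R_2^m\in I_2V^A$ (when $d=2$ the coefficient $c$ is a unit and this argument breaks down, consistent with the exclusion in the hypothesis). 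In the $n\geq 2$ case, I would induct on $j$, using the relation $\bar{v}_{j+1}\equiv 0$ multiplied by appropriate monomials in the lower generators and simplified via the previously-established relations (notably $\pi^{d-1}v_i^A\equiv 0$ for $i\leq j$) to bring successive powers of $R_{n+1}$ into $I_{n+1}V^A$.

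The main obstacle I expect is the second half of the last step: the Hazewinkel polynomial expressions intertwine the generators nontrivially, so keeping track of which $\pi$-powers and multi-degrees survive under repeated application of the relations requires careful combinatorial book-keeping, whereas the leading-term argument is essentially immediate from $2(d-1)\geq d$. The contrast between the $n=1$ case (needing $d>2$ for a positive $\pi$-factor on $c$) and the $n\geq 2$ case (always viable once the induction is set up) is dictated precisely by this remainder analysis.
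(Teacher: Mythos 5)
Your reduction to showing that $\bar v_{n+1}:=\gamma(v_{n+1})$ is nilpotent in $V^A/(p,\bar v_1,\dots,\bar v_n)V^A$ is a valid reformulation, and your treatment of the leading term $\tfrac{p}{\pi}v_{n+1}^A$ and of the $n=1$, $d>2$ case is essentially complete. The gap is in the $n\geq 2$ remainder analysis, and it is a real one, not merely bookkeeping to be filled in. The intermediate relation $\pi^{d-1}v_i^A\equiv 0$ in $V^A/I_{n+1}V^A$ on which your induction rests is not established and fails in general once $i\geq 2$. Indeed, the relation $\bar v_2\equiv 0$ yields only
\begin{equation*}
\pi^{d-1}\,(\mathrm{unit})\,v_2^A \;\equiv\; -\,\pi^{d-2}(\mathrm{unit})\,(v_1^A)^{p+1} \;+\; \pi^{dp-p-1}(\mathrm{unit})\,(v_1^A)^{p+1},
\end{equation*}
and the coefficient $\pi^{d-2}$ on the right is one power of $\pi$ short of what the relation $\pi^{d-1}v_1^A\equiv 0$ (coming from $\bar v_1\equiv 0$) can kill. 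A direct check with $p=2$, $d=3$, $n=2$ shows that $\pi^{d-2}(v_1^A)^{p+1}=\pi(v_1^A)^3$ does not lie in $(p,\bar v_1,\bar v_2)V^A$, so $\pi^{d-1}v_2^A\not\equiv 0$ there. Relatedly, while your grading observation $R_{n+1}\in(v_1^A,\dots,v_{n-1}^A)V^A$ is correct, that $V^A$-ideal is not contained in $I_{n+1}V^A$: each generator $\bar v_i=\tfrac{p}{\pi}v_i^A+\cdots$ carries the non-unit coefficient $\tfrac{p}{\pi}$, so modding out the $\bar v_i$ does not kill the $v_i^A$ themselves. Merely recording which $v_i^A$ appear in $R_{n+1}$ is therefore too coarse; the argument has to track the $\pi$-adic valuations of the coefficients.

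The paper's proof is organized around exactly that valuation information and avoids the inductive relations you propose. Starting from the identity $\gamma(v_{n+1})=\tfrac{p}{\pi}v_{n+1}^A+\sum_{j=1}^n\bigl(\tfrac{p}{\pi}(v_{n+1-j}^A)^{p^j}-\gamma(v_{n+1-j})^{p^j}\bigr)\ell_j^A$, it first reduces modulo $I_nV^A$: for $j<n$ the summands are discarded because $\gamma(v_i)\in I_nV^A$ for $i<n$ and because terms divisible by $\ell_j^A$ are absorbed similarly, leaving
\begin{equation*}
\gamma(v_{n+1}) \equiv \tfrac{p}{\pi}\,v_{n+1}^A + \tfrac{p}{\pi}\,(v_1^A)^{p^n}\ell_n^A \pmod{I_nV^A},
\end{equation*}
in which every surviving coefficient is a multiple of $\tfrac{p}{\pi}$ and hence has strictly positive $\pi$-adic valuation. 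Raising to a $p$-power $m>[K:\mathbb{Q}_p]$ then kills the cross-terms by Frobenius modulo $p$ and forces $(\tfrac{p}{\pi})^m\equiv 0\bmod p$, giving $\gamma(v_{n+1})^m\equiv 0$ already modulo $I_nV^A$. So the two decisive ingredients that your proposal lacks are (i) passing to the residue of $\gamma(v_{n+1})$ modulo $I_nV^A$ \emph{before} taking powers, so only terms with a $\tfrac{p}{\pi}$ factor remain, and (ii) choosing the exponent to be a large $p$-power so that Frobenius handles the cross-terms; neither step is recoverable from the relation $\pi^{d-1}v_i^A\equiv 0$, which does not hold.
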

\begin{proof}
Let $e= [K:\mathbb{Q}_p]$.
Let $\gamma: BP_*\rightarrow V^A$ denote the usual ring map (i.e., the one classifying the underlying $p$-typical formal group law of the universal $A$-typical formal $A$-module) by which we regard $V^A$ as a $BP_*$-module. Let $\ell_1, \ell_2, \dots$ denote the logarithm coefficients for the universal $p$-typical formal group law, 
and let $\ell_1^A, \ell_2^A, \dots$ denote the logarithm coefficients for the universal $A$-typical formal $A$-module.
Then $\gamma(\ell_i) = \ell_i^A$ for all $i$, since $K/\mathbb{Q}_p$ is totally ramified; see Remark~\ref{remark on log coefficients}.
Then, by the definition of the Hazewinkel generators of $BP_*$ and of $V^A$, we have the equations
\begin{align*} 
 p\ell_{n+1}
  & =  \sum_{j=0}^{n} (v_{n+1-j}^{p^j}) \ell_j \mbox{\ \ and} \\
 \pi \ell_{n+1}^A 
  & =  \sum_{j=0}^{n} (v_{n+1-j}^{p^j}) \ell_j^A .
\end{align*}
Applying $\gamma$, dividing by $p$ and by $\pi$, setting $\gamma(\ell_{n+1})$ equal to $\ell_{n+1}^A$, and solving for $\gamma(v_{n+1})$ yields the equation
\begin{align} 
\label{equality 400} \gamma(v_{n+1}) 
  & =  \frac{p}{\pi} v_{n+1}^A + \sum_{j=1}^{n}\left( \left(\frac{p}{\pi} (v_{n+1-j}^A)^{p^j} - \gamma(v_{n+1-j})^{p^j}\right) \ell_j^A\right). \end{align}

We now break into two cases:
\begin{itemize}
\item If $n=1$, then, by Proposition~\ref{low degree gamma computation}, we have:
\begin{align} 
 \gamma(v_{2}) 
\label{equality 401}  & =  \frac{p}{\pi} v_{2}^A + \frac{p}{\pi^2} (v_1^A)^{p+1}  - \frac{p^p}{\pi^{p+1}}\gamma(v_1)^{p+1} . \end{align}
Let $m$ be a power of $p$ such that $m>e$.
Then 
\begin{align*}
 \gamma(v_2^m) &\equiv \left( \frac{p}{\pi} v_2^A\right)^m + \left( \frac{p}{\pi^2} (v_1^A)^{p+1}\right)^m + \left( \frac{-p^p}{\pi^{p+1}}\gamma(v_1)^{p+1}\right)^m \\
 &\equiv \frac{p^m}{\pi^{2m}} (v_1^A)^{m(p+1)},\end{align*}
which is zero modulo $\pi$ 
if the ramification degree of the extension $K/\mathbb{Q}_p$ is greater than two. Hence, if $[K: \mathbb{Q}_p] > 2$, then $v_2$ acts with eventual 
division by $v_1$ on $V^A$ modulo $I_1 = (p)$,
since being zero is a trivial case of being divisible by $v_2$.
\item 
If $n>1$, then, for any $i<n$, both $\gamma(v_i)$ and any integral expression in $V^A$ divisible by $\ell_i^A$ in $\mathbb{Q}\otimes_{\mathbb{Z}} V^A$
are zero modulo $\gamma(I_n)V^A$.
Consequently~\ref{equality 400} reduces to 
\begin{align*} 
 \gamma(v_{n+1}) 
  &\equiv \frac{p}{\pi} v_{n+1}^A + \frac{p}{\pi} (v_1^A)^{p^n}\ell_n^A  
 \mod \gamma(I_n)V^A.
\end{align*}
Now let $m$ be any power of $p$ satisfying $m > e$. Since $\frac{p}{\pi}$ has positive $\pi$-adic valuation, $\left(\frac{p}{\pi}\right)^m \equiv 0$ modulo $p$, and consequently 
$\gamma(v_{n+1})^{m} \equiv \left(\frac{p}{\pi} v_{n+1}^A\right)^m + \left(\frac{p}{\pi} (v_1^A)^{p^n}\ell_n^A\right)^m \equiv 0$ modulo $\gamma(I_n)V^A$.
Hence $v_{n+1}$ acts with eventual division by $v_n$ on $V^A$ modulo $I_n$ (since, again, 
being zero is a trivial case of being divisible by $v_n$).
\end{itemize}
\end{proof}

Beginning in the proof of Theorem~\ref{main thm on tot ram nonexistence}, we refer (when convenient) to \cref{local cohomology appendix}, the appendix on basic ideas from local cohomology at the end of this paper.
\begin{theorem}\label{main thm on tot ram nonexistence}
Let $K/\mathbb{Q}_p$ be a finite, totally ramified extension of degree greater than one.
Let $X$ be a spectrum such that $BP_*(X)$ is the underlying graded $BP_*$-module of a graded $V^A$-module. 
Then the following statements are all true:
\begin{itemize}
\item For all nonnegative integers $n$, $E(n)_*(X)$ is a $\mathbb{Q}$-vector space.
\item For all nonnegative integers $n$, $L_{E(n)}X$ is a rational spectrum, i.e., a wedge of suspensions of $H\mathbb{Q}$.
\item For all positive integers $n$, $K(n)_*(X) \cong 0$.
\item $X$ is an extension of a rational spectrum by a dissonant spectrum. That is, $X$ sits in a homotopy fiber sequence
\begin{equation}\label{fiber seq 60} cX \rightarrow X \rightarrow LX ,\end{equation}
where $LX$ is a rational spectrum, that is, a wedge product of suspensions of copies of the Eilenberg-Mac Lane spectrum $H\mathbb{Q}$,
and $cX$ is a dissonant spectrum, that is, $E(n)\smash cX$ for all $n>0$.
(Here $LX$ is the usual rationalization of $X$, i.e., $LX \simeq X \smash H\mathbb{Q}$.)
\end{itemize}
\end{theorem}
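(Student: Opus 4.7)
The plan is to deduce all four conclusions from the single claim that $L_{E(n)}X$ is a rational spectrum for every integer $n \geq 1$. Granting this claim, $E(n)_*(X) \cong E(n)_*(L_{E(n)}X)$ is a $\mathbb{Q}$-vector space; $K(n)_*(X) \cong K(n)_*(L_{E(n)}X) = 0$ for $n \geq 1$, since $K(n)$ is $p$-torsion while $L_{E(n)}X$ is $\mathbb{Q}$-linear; and the rationalization map $L_{E(n)}X \to L_{E(n)}LX \simeq LX$ is an equivalence (being a rationalization of a spectrum that is already rational), so that applying $L_{E(n)}$ to the fiber sequence $cX \to X \to LX$ yields $L_{E(n)}cX \simeq 0$ for every $n \geq 1$. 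Together with $L_{E(0)}cX \simeq 0$, which is immediate because $cX \wedge H\mathbb{Q} \simeq 0$, this exhibits $cX$ as dissonant.

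To show $L_{E(n)}X$ is rational, note that, since $L_{E(n)}X$ is $p$-local, it is a wedge of suspensions of $H\mathbb{Q}$ if and only if $p$ acts as an autoequivalence on it, equivalently $L_{E(n)}(X/p) \simeq 0$. By Lemma~\ref{acyclicity lemma} this is the assertion that $BP_*(X/p)$ is $v_n$-power-torsion. I will prove this for all $n \geq 0$ by induction on $n$, with the case $n = 0$ immediate since $p^2$ annihilates $BP_*(X/p)$.

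The heart of the proof is the base case $n = 1$, which is where the totally ramified hypothesis enters in a decisive way. Write $e = [K:\mathbb{Q}_p] \geq 2$, let $\pi$ be a uniformizer of $A$, and write $p = \pi^e u$ with $u \in A^{\times}$. By Proposition~\ref{low degree gamma computation}, $\gamma(v_1) = (p/\pi) v_1^A = \pi^{e-1}u\, v_1^A$, so $\gamma(v_1)^2 = \pi^{2(e-1)} u^2 (v_1^A)^2 \in pV^A$ because $2(e-1) \geq e$. Hence $v_1^2$ acts as zero on every $V^A/p$-module. The short exact sequence of $BP_*$-modules $0 \to BP_*(X)/p \to BP_*(X/p) \to BP_{*-1}(X)[p] \to 0$ associated to the cofiber sequence $X \xrightarrow{p} X \to X/p$ has outer terms that are $V^A/p$-modules, so Lemma~\ref{power torsion preserved under extension} renders $BP_*(X/p)$ itself $v_1$-power-torsion.

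For the inductive step $n \geq 2$, assume $BP_*(X/p)$ is $v_i$-power-torsion for $0 \leq i \leq n-1$. Lemma~\ref{eventual divisibility of VA-modules} shows that $v_{n+1}$ acts with eventual $v_n$-division on $V^A$ modulo $I_n$, and Lemma~\ref{modules over eventually div algs are eventually div} transports this property to every $V^A$-module, in particular to the outer terms of the short exact sequence above. An elementary diagram chase (divide the image in the quotient, lift, then apply the same property to the resulting element of the subobject) shows that the eventual-$v_n$-division-by-$v_{n+1}$ property passes to extensions, so it holds for $BP_*(X/p)$ modulo $I_n$. Lemma~\ref{localization and eventual division} then delivers $v_n$-power-torsion for $BP_*(X/p)$, closing the induction. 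The expected main obstacle is precisely this base case $n = 1$: when the ramification index is $e = 2$, Lemma~\ref{eventual divisibility of VA-modules} provides nothing at level $n = 1$ (one checks that $v_2$ does not act with eventual $v_1$-division on $V^A$ modulo $(p)$ in that case), so the uniform inductive recipe of the $n \geq 2$ case breaks down, and the totally ramified hypothesis has to be exploited instead through the direct nilpotence-mod-$p$ computation for $\gamma(v_1)$ given above.
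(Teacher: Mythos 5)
Your proof is correct, and it takes a genuinely different path through the result than the paper does. The paper's proof defines $cX$ as the fiber of the rationalization map $X \to LX$, uses local cohomology (Propositions~\ref{basic properties of local cohomology 1} and~\ref{basic properties of local cohomology 2}) to identify $BP_*(cX)$ as an extension of $p$-power-torsion local cohomology modules, and then runs an induction (using Lemmas~\ref{eventual divisibility of VA-modules}, \ref{modules over eventually div algs are eventually div}, and \ref{localization and eventual division}) to conclude that $BP_*(cX)$ is $v_n$-power-torsion for all $n$, so that $cX$ is dissonant; the other three conclusions are then derived from the dissonance of $cX$. You instead work with $X/p$, the cofiber of $p$ on $X$, and show that $BP_*(X/p)$ is $v_n$-power-torsion for all $n$; this gives $L_{E(n)}(X/p) \simeq 0$, hence $L_{E(n)}X$ rational, and the remaining conclusions (including dissonance of $cX$) follow formally. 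The two spectra $X/p$ and $\Sigma cX$ play closely analogous roles, but your $X/p$ is the more elementary choice: its $BP$-homology sits in the familiar short exact sequence with outer terms $BP_*(X)/p$ and $BP_{*-1}(X)[p]$, both of which are manifestly graded $V^A$-modules, so you can invoke Lemma~\ref{modules over eventually div algs are eventually div} on those outer terms and then transport the eventual-division property across the extension by the explicit diagram chase you sketch (which does work, and which fills a point that the paper handles a bit more glibly — the paper asserts that its short exact sequence~\ref{ses 61} is one of $V^A$-modules, which is clear only for the outer terms, not a priori for $BP_*(cX)$). You also completely avoid the local cohomology formalism. Your remark that the base case $n=1$ is forced, because Lemma~\ref{eventual divisibility of VA-modules} fails for $v_2$ modulo $I_1$ when the ramification index is exactly $2$, and that the totally-ramified hypothesis must therefore be spent on the direct computation $\gamma(v_1)^2 \in pV^A$, matches the structure of the paper's own argument, which handles $v_1$ by the same direct computation before entering the induction. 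Both proofs are sound; yours is a little more hands-on and arguably closes a small gap that the paper's exposition leaves implicit.
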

\begin{proof}
Let $cX$ be as in the statement of the theorem, i.e., $cX$ is the homotopy fiber of the rationalization map $X \rightarrow LX$.
Since $BP_*(LX) \cong p^{-1}BP_*(X)$ and the
map induced in $BP$-homology by the rationalization agrees
up to isomorphism with the 
natural localization map
$i: BP_*(X) \rightarrow p^{-1}BP_*(X)$, we have a short exact sequence 
of graded $BP_*$-modules
\begin{equation}\label{ses 60} 0 \rightarrow \coker \Sigma i \rightarrow BP_*(cX) \rightarrow
 \ker i \rightarrow 0,\end{equation}
i.e., using Propositions~\ref{basic properties of local cohomology 1} and~\ref{basic properties of local cohomology 2}, 
a short exact sequence of graded $BP_*$-modules
\begin{equation}\label{ses 61} 0 \rightarrow H^1_{(p)}(\Sigma BP_*(X)) \rightarrow BP_*(cX) \rightarrow
 H^0_{(p)}(BP_*(X)) \rightarrow 0.\end{equation}
Even better, the short exact sequences~\ref{ses 60} and~\ref{ses 61} are short exact sequences of graded $V^A$-modules, since $p$ is not just an 
homogeneous element of $BP_*$ but also a homogeneous element of $V^A$.

In general, for any commutative graded ring $R$, any homogeneous element $r\in R$, and any graded $R$-module $M$,
the $R$-module $H^1_{(r)}(M) \cong (r^{-1}M)/M$ is $r$-power-torsion, since any homogeneous element $m\in r^{-1}M$ can be multiplied by
some power of $r$ to get an element of $M$, which represents zero in $H^1_{(r)}(M)$. Consequently $H^1_{(p)}(BP_*(X))$ is a
$p$-power-torsion graded $V^A$-module.
Proposition~\ref{basic properties of local cohomology 2} also
gives us that $H^0_{(p)}(BP_*(X))$ is a $p$-power-torsion graded $V^A$-module.
Now Lemma~\ref{power torsion preserved under extension} gives us that $BP_*(cX)$ is also a $p$-power-torsion graded $V^A$-module.
By Proposition~\ref{low degree gamma computation}, 
$\gamma(v_1) = \frac{p}{\pi}v_1^A$, so $BP_*(cX)$ being $p$-power-torsion implies that $BP_*(cX)$ is also $v_1$-power-torsion. 
So Lemma~\ref{acyclicity lemma} implies that
$L_{E(1)}cX$ is contractible. 

This is the initial step in an induction:
if $n$ is at least $2$ and we have already shown that $L_{E(n-1)}cX$ is contractible and $BP_*(cX)$ is $v_i$-power-torsion for all $i\leq n-1$,
then by Lemma~\ref{eventual divisibility of VA-modules} and
Lemma~\ref{modules over eventually div algs are eventually div}, 
$v_{n+1}$ acts with eventual division by $v_{n}$ on $BP_*(cX)$ modulo $I_{n}$.
Then Lemma~\ref{localization and eventual division} implies that $BP_*(cX)$ is $v_n$-power-torsion and that the Bousfield localization $L_{E(n)}cX$ is contractible,
completing the inductive step.
So $cX$ is acyclic with respect to all Johnson-Wilson theories, hence $cX$ is dissonant,
hence $X$ is an extension of a rational spectrum by a dissonant spectrum, as claimed.

Consequently, $L_{E(n)}cX$ is contractible for all $n\geq 0$, and $BP_*(cX)$ is $v_n$-power-torsion for all $n\geq 0$.
Consequently $cX$ is $E(n)$-acyclic, and smashing fiber sequence~\ref{fiber seq 60} with $E(n)$ yields the weak equivalence
$E(n)\smash X \stackrel{\simeq}{\longrightarrow} E(n)\smash LX$. Since $LX$ is a rational spectrum, it splits as a wedge of suspensions of copies of
$H\mathbb{Q}$, and hence $E(n)\smash LX$ splits as a wedge of suspensions of copies of $E(n)\smash HQ$.
Hence $E(n)_*(X)$ is a $\mathbb{Q}$-vector space, as claimed.

Smashing the fiber sequence~\ref{fiber seq 60} with the $E(n)$-local sphere $L_{E(n)}S$, and now using Theorem~\ref{smashing conj}, 
and the fact that $L_{E(n)}L_{E(0)}X \simeq L_{E(0)}X$, we get the homotopy fiber sequence
\[ L_{E(n)}cX \rightarrow L_{E(n)}X \rightarrow L_{E(0)}X .\]
We have shown that $L_{E(n)}cX$ is contractible, so the localization map $L_{E(n)}X \rightarrow L_{E(0)}X$ is an equivalence,
so $L_{E(n)}X$ is a rational spectrum as claimed. 

Furthermore, recall that, for all positive integers $n$, the notation $\mu_nX$ is used to denote the ``$n$th monochromatic layer of $X$,'' i.e., $\mu_nX$ sits in the
homotopy fiber sequence
\[ \mu_n X \rightarrow L_{E(n)}X \rightarrow L_{E(n-1)}X.\]
So $\mu_nX$ is contractible for all $n>0$. Recall also (from e.g.~\cite{MR1601906}) that there exists natural isomorphisms in the stable homotopy category 
$L_{K(n)}\circ \mu_n \simeq L_{K(n)}$ and $\mu_n\circ L_{K(n)} \simeq \mu_n$; consequently the contractibility of $\mu_nX$ implies the contractibility of 
$L_{K(n)}X$. So $X$ is $K(n)$-acyclic for all $n>0$, so $K(n)_*(X) \cong 0$ for all $n>0$, as claimed.
\end{proof}

\begin{corollary}\label{tot ram corollary for spaces}
Let $K/\mathbb{Q}_p$ be a finite, totally ramified extension of degree greater than one.
Let $X$ be a space such that $BP_*(X)$ is the underlying graded $BP_*$-module of a graded $V^A$-module.
Then $X$ is stably rational, that is,
its stable homotopy groups are all $\mathbb{Q}$-vector spaces.
\end{corollary}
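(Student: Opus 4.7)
The plan is to apply Theorem~\ref{main thm on tot ram nonexistence} to the suspension spectrum $\Sigma^{\infty}X$ and combine it with the fact that suspension spectra are harmonic.

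First, since $BP_*(\Sigma^{\infty}X) \cong BP_*(X)$ is by hypothesis the underlying $BP_*$-module of a $V^A$-module, Theorem~\ref{main thm on tot ram nonexistence} applies to $\Sigma^{\infty}X$. It produces a homotopy fiber sequence
\[ c\Sigma^{\infty}X \longrightarrow \Sigma^{\infty}X \longrightarrow L\Sigma^{\infty}X,\]
in which $L\Sigma^{\infty}X \simeq \Sigma^{\infty}X \smash H\mathbb{Q}$ is a rational spectrum and $c\Sigma^{\infty}X$ is dissonant.

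Next, by the Hopkins--Ravenel theorem~\cite{MR1317578}, $\Sigma^{\infty}X$ is harmonic, i.e., local with respect to $\bigvee_{n\in\mathbb{N}} E(n)$. Since $c\Sigma^{\infty}X$ is acyclic with respect to this same wedge, every map from $c\Sigma^{\infty}X$ to $\Sigma^{\infty}X$ is nulhomotopic (this is the same observation used in the proof of Corollary~\ref{suspension spectra and VA cor 2}). Therefore the connecting map in the above fiber sequence is null, and the map $\Sigma^{\infty}X \to L\Sigma^{\infty}X$ admits a section, so that $\Sigma^{\infty}X$ splits as $L\Sigma^{\infty}X \vee \Sigma c\Sigma^{\infty}X$. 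Since $\Sigma^{\infty}X$ is harmonic, its dissonant wedge summand $\Sigma c\Sigma^{\infty}X$ must be contractible, so the rationalization map $\Sigma^{\infty}X \to L\Sigma^{\infty}X$ is a weak equivalence.

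Consequently $\Sigma^{\infty}X$ is a rational spectrum, so its homotopy groups $\pi_*^s(X) \cong \pi_*(\Sigma^{\infty}X)$ are $\mathbb{Q}$-vector spaces, as claimed. I do not expect any real obstacle here, since all the work is already done by Theorem~\ref{main thm on tot ram nonexistence}; the only content beyond that theorem is the standard incompatibility between dissonance and harmonicity for suspension spectra.
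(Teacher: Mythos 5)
Your approach is the same as the paper's (the paper's proof is just a two-sentence compression of exactly this reasoning: suspension spectra are harmonic, dissonant spectra map trivially to them, apply Theorem~\ref{main thm on tot ram nonexistence}). However, you have a directional error in the splitting step. If the map $c\Sigma^{\infty}X \to \Sigma^{\infty}X$ in the distinguished triangle $c\Sigma^{\infty}X \to \Sigma^{\infty}X \to L\Sigma^{\infty}X \to \Sigma c\Sigma^{\infty}X$ is null, what follows is that $\Sigma^{\infty}X \to L\Sigma^{\infty}X$ admits a \emph{retraction} (not a section), so that $\Sigma^{\infty}X$ is a retract of $L\Sigma^{\infty}X$ and $L\Sigma^{\infty}X \simeq \Sigma^{\infty}X \vee \Sigma c\Sigma^{\infty}X$. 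Your claim that $\Sigma^{\infty}X \simeq L\Sigma^{\infty}X \vee \Sigma c\Sigma^{\infty}X$, with the roles reversed, is false in general.

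The conclusion is easily salvaged. Simplest: since $\Sigma^{\infty}X$ is a retract of the rational spectrum $L\Sigma^{\infty}X$ and rationality ($E(0)$-locality) is closed under retracts, $\Sigma^{\infty}X$ is rational. Alternatively, if you want to run your ``harmonicity kills the dissonant wedge summand'' argument, note that $\Sigma c\Sigma^{\infty}X$ is a wedge summand of $L\Sigma^{\infty}X$ (not of $\Sigma^{\infty}X$), and $L\Sigma^{\infty}X$, being rational, is harmonic; so $\Sigma c\Sigma^{\infty}X$ is both harmonic and dissonant, hence contractible, and the fiber sequence then shows $\Sigma^{\infty}X \to L\Sigma^{\infty}X$ is an equivalence.
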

\begin{proof}
By~\cite{MR1317578}, suspension spectra are harmonic,
hence $\Sigma^{\infty} X$ does not admit nontrivial maps from dissonant spectra.
By Theorem~\ref{main thm on tot ram nonexistence}
$\Sigma^{\infty} X$ is rational.
\end{proof}

\begin{corollary}\label{torsion-free and local coh}
Let $K/\mathbb{Q}_p$ be a finite, totally ramified extension of degree greater than one, and with ring of integers $A$.
Let $X$ be a spectrum such that $BP_*(X)$ is the underlying graded $BP_*$-module of a graded $V^A$-module,
and suppose that $BP_*(X)$ is torsion-free as an abelian group, i.e., $BP_*(X)$ is $p$-torsion-free.
Then the local cohomology module $H^1_{(p)}(BP_*(X))$ and the mod $p$ reduction $BP_*(X)/pBP_*(X)$ are both $v_n$-power-torsion $BP_*$-modules for all $n\geq 0$.
\end{corollary}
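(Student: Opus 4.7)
The plan is to deduce both assertions from Theorem~\ref{main thm on tot ram nonexistence} together with Proposition~\ref{dissonance lemma}. For the claim about $H^1_{(p)}(BP_*(X))$, I will apply the theorem directly to $X$ and use $p$-torsion-freeness to collapse one of the two terms in the local cohomology short exact sequence that already appears in the proof of Theorem~\ref{main thm on tot ram nonexistence}. For the claim about $BP_*(X)/pBP_*(X)$, I will instead apply the theorem to the mod-$p$ cofiber $X/p$, after checking that $p$-torsion-freeness upgrades $BP_*(X/p)$ to a $V^A$-module of the correct form.

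For the first statement, recall the short exact sequence of $V^A$-modules
\[ 0 \to H^1_{(p)}(\Sigma BP_*(X)) \to BP_*(cX) \to H^0_{(p)}(BP_*(X)) \to 0 \]
established in the proof of Theorem~\ref{main thm on tot ram nonexistence}, where $cX$ is the fiber of the rationalization $X \to LX$. The right-hand term is the $p$-power-torsion submodule of $BP_*(X)$ and vanishes by hypothesis, so the inclusion on the left is an isomorphism, identifying $\Sigma H^1_{(p)}(BP_*(X))$ with $BP_*(cX)$. Theorem~\ref{main thm on tot ram nonexistence} tells us $cX$ is dissonant, so Proposition~\ref{dissonance lemma} forces $BP_*(cX)$, and hence $H^1_{(p)}(BP_*(X))$ itself, to be $v_n$-power-torsion for every $n\geq 0$.

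For the second statement, let $X/p$ denote the cofiber of $p\colon X\to X$. The long exact sequence in $BP$-homology, combined with $p$-torsion-freeness, collapses to an isomorphism $BP_*(X/p) \cong BP_*(X)/pBP_*(X)$. Because $K/\mathbb{Q}_p$ is totally ramified, $p = u\pi^e$ with $u\in A^{\times}$, so $p$ is a homogeneous element of $V^A$; hence $BP_*(X)/pBP_*(X)$ naturally inherits the structure of a $V^A/(p)$-module, in particular of a $V^A$-module, and this structure restricts to the topological $BP_*$-module structure on $BP_*(X/p)$. Thus Theorem~\ref{main thm on tot ram nonexistence} applies to $X/p$. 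Since $BP_*(X/p)$ is annihilated by $p$, its rationalization has trivial $BP$-homology and is therefore contractible; the theorem's fiber sequence for $X/p$ then exhibits $X/p$ itself as dissonant, and Proposition~\ref{dissonance lemma} delivers the desired $v_n$-power-torsion conclusion.

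The main point requiring care is the second paragraph's compatibility check: one must verify that the abstract $V^A$-module structure produced on the algebraic quotient $BP_*(X)/pBP_*(X)$ matches the topological $BP_*$-module structure on $BP_*(X/p)$ that comes from the cofiber sequence, so that Theorem~\ref{main thm on tot ram nonexistence} can legitimately be invoked for $X/p$. Once that bookkeeping is unpacked, both assertions of the corollary follow essentially formally, with no new inputs beyond what has already been assembled in this section.
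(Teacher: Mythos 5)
Your argument is correct and matches the paper's own proof essentially step for step: the first claim via identifying $BP_*(cX)$ with (a suspension of) $H^1_{(p)}(BP_*(X))$ after using $p$-torsion-freeness to kill $H^0_{(p)}$, and the second claim by passing to $X\smash S/p$ and observing it is rationally acyclic and hence dissonant. One small remark on your final paragraph: the ``compatibility check'' is a non-issue, because once the long exact sequence gives you an isomorphism of \emph{graded $BP_*$-modules} $BP_*(X/p)\cong BP_*(X)/pBP_*(X)$ (using $p$-torsion-freeness), you may simply transport the quotient $V^A$-module structure across that isomorphism; the hypothesis of Theorem~\ref{main thm on tot ram nonexistence} only asks for $BP_*(X/p)$ to be the underlying $BP_*$-module of \emph{some} $V^A$-module, and no further agreement needs to be verified. (Also, $p\in V^A$ simply because $V^A$ is an $A$-algebra and $A$ is a $\hat{\mathbb{Z}}_p$-algebra; the factorization $p=u\pi^e$ is not needed.)
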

\begin{proof}
Since $BP_*(X)$ is assumed to be $p$-torsion-free, its maximal $p$-power-torsion submodule $\Gamma_{(p)}(BP_*(X))$ is trivial, so
from Propositions~\ref{basic properties of local cohomology 1} and~\ref{basic properties of local cohomology 2}
we get the short exact sequence of graded $V^A$-modules
\[ 0 \rightarrow BP_*(X) \rightarrow p^{-1}BP_*(X) \rightarrow H^1_{(p)}(BP_*(X)) \rightarrow 0,\]
and the localization map $BP_*(X) \rightarrow p^{-1}BP_*(X)$ coincides
with the localization map in homotopy $BP_*(X) \rightarrow BP_*(L_{E(0)}X)$.
Consequently $H^1_{(p)}(BP_*(X)) \cong BP_*(cX)$, the $BP$-homology
of the $E(0)$-acyclization $cX$ of $X$.
Now Theorem~\ref{main thm on tot ram nonexistence}
tells us that $BP_*(cX)$ is $v_n$-power-torsion for all $n\geq 0$.

Similarly, since $BP_*(X)$ is $p$-torsion-free, the $BP$-homology of $X$ smashed with the mod $p$ Moore spectrum is $BP_*(X)/pBP_*(X)$:
\[ BP_*(X \smash S/p) \cong BP_*(X)/pBP_*(X).\]
Of course the mod $p$ reduction of a graded $V^A$-module is still a graded $V^A$-module, so
$X\smash S/p$ is a spectrum with the property that $BP_*(X \smash S/p)$ is a graded $V^A$-module, hence by Theorem~\ref{main thm on tot ram nonexistence} $X\smash S/p$ is an extension of a rational spectrum by a dissonant spectrum. However, the homotopy groups of $X\smash S/p$ are all $p$-torsion,
hence rationally trivial, hence $X\smash S/p$ is $E(0)$-acyclic and does not map nontrivially to a rational spectrum; so $X \smash S/p$ is dissonant.
Now Proposition~\ref{dissonance lemma} gives us that $BP_*(X \smash S/p)$ is $v_n$-power-torsion for all $n\geq 0$.
\end{proof}

\begin{prop} {\bf (Hazewinkel.)}\label{hazewinkels formula}
Let $A$ be the ring of integers in a finite field extension of $\mathbb{Q}$. Let $\pi$ be a uniformizer for $A$ and let $\mathbb{F}_q$ be the residue field of $A$. 
Then the logarithm coefficients $\ell_1^A,\ell_2^A, \dots$
of the universal $A$-typical formal $A$-module satisfy the relation
\begin{eqnarray*} \pi \ell_h^A  & = & \ell^A_{h-1} (v_1^A)^{q^{h-1}} + \dots
 + \ell^A_1 (v^A)^q_{h-1} + v_h^A, \\
\ell_h^A & = & \sum_{i_1+ \dots + i_r = h} \pi^{-r} v_{i_1}^A(v_{i_2}^{q^{i_1}})
\dots (v_{i_r}^A)^{q^{i_1+\dots +i_{r-1}}},\end{eqnarray*}
where all $i_j$ are positive integers,
and where $v_1^A, v_2^A, \dots$ are Hazewinkel's generators
for $V^A$.
\end{prop}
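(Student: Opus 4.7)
My plan is to prove this proposition by first observing that the first displayed equation is essentially the defining relation for Hazewinkel's generators, and then to derive the closed-form expression for $\ell_h^A$ by induction on $h$, using the first equation to recursively unwind $\ell_h^A$ until only the $v_i^A$'s remain.

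For the first equation, note that in Remark~\ref{remark on log coefficients} the Hazewinkel generators are defined precisely by the recursion $\pi \ell_n^A = \sum_{i=0}^{n-1} \ell_i^A (v_{n-i}^A)^{q^i}$, together with $\ell_0^A = 1$. Separating out the $i=0$ term (which gives $v_h^A$) and the remaining terms yields the first formula. This step requires no calculation; it is simply a rewriting of Hazewinkel's functional-equation-based construction.

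For the second equation, I proceed by induction on $h$. The base case $h=1$ is immediate: $\pi \ell_1^A = v_1^A$, so $\ell_1^A = \pi^{-1} v_1^A$, matching the right-hand side where the only composition of $1$ is $(i_1)=(1)$, $r=1$. For the inductive step, assume the claimed formula holds for $\ell_j^A$ whenever $j<h$. Substitute these expressions into
\[ \ell_h^A = \pi^{-1} v_h^A + \pi^{-1} \sum_{i=1}^{h-1} \ell_i^A (v_{h-i}^A)^{q^i} \]
from the first equation. Each $\ell_i^A$ in the sum is replaced by a sum indexed by compositions $(i_1,\ldots,i_r)$ of $i$; multiplying by $(v_{h-i}^A)^{q^i}$ and noting that $q^i = q^{i_1+\cdots+i_r}$ appends $i_{r+1}:=h-i$ as a new final entry, producing a composition of $h$ with exactly the right Frobenius exponent on its last factor. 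The prefactor $\pi^{-1}\cdot\pi^{-r} = \pi^{-(r+1)}$ also matches. The isolated term $\pi^{-1}v_h^A$ corresponds to the length-one composition $(h)$. Since every composition of $h$ of length $\geq 2$ arises uniquely from a pair $(i,(i_1,\ldots,i_r))$ with $1\leq i \leq h-1$ and $(i_1,\ldots,i_r)$ a composition of $i$ (by setting $i = i_1+\cdots+i_r$), the union of these contributions with the length-one term gives exactly the claimed sum over all compositions of $h$.

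I do not expect a significant obstacle here: the whole content is a bookkeeping argument verifying that the recursive unwinding of the defining equation matches the proposed closed form under the bijection between compositions of $h$ of length $r+1$ and pairs (composition of some $i<h$ of length $r$, choice of final part $h-i$). The only thing to be careful about is tracking the Frobenius exponents, but these line up automatically because each time we substitute a sub-expression for $\ell_i^A$ we raise the newly appended factor to $q^i = q^{i_1+\cdots+i_r}$, which is precisely the exponent prescribed by the formula for the enlarged composition.
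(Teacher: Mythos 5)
Your proof is correct, but it is organized very differently from the paper's: the paper simply cites this as formula~21.5.4 of Hazewinkel's book and gives no independent derivation. What you supply is a self-contained inductive proof. The first display is indeed just the recursion $\pi\ell_n^A=\sum_{i=0}^{n-1}\ell_i^A(v_{n-i}^A)^{q^i}$ from Remark~\ref{remark on log coefficients} with the $i=0$ term $\ell_0^A(v_n^A)^{q^0}=v_n^A$ peeled off, and the second follows by the unwinding you describe: the bijection between compositions $(i_1,\dots,i_{r+1})$ of $h$ of length $\geq 2$ and pairs $\bigl(i,\,(i_1,\dots,i_r)\bigr)$ with $i=i_1+\cdots+i_r$ and $i_{r+1}=h-i$ sends a composition to the data from which its summand is built, the Frobenius exponent on the appended factor is $q^i=q^{i_1+\cdots+i_r}$ as required, and the $\pi$-exponent $-r-1$ is accounted for by the outer $\pi^{-1}$. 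The base case $h=1$ is trivial. The only thing your approach requires that the cited source supplies implicitly is the knowledge that Hazewinkel's generators are well-defined, i.e., that the recursion actually produces elements $v_n^A\in V^A$ (as opposed to merely $\mathbb{Q}\otimes V^A$); but the proposition takes the existence of Hazewinkel's generators as given, so you are entitled to the recursion. What your version buys is self-containment at the cost of a short bookkeeping argument; the paper's version buys brevity at the cost of sending the reader to an external reference.
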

\begin{proof}
This is formula 21.5.4 of \cite{MR506881}.
\end{proof}

\begin{prop} \label{action of v in tot ram case}
Let $K/\mathbb{Q}_p$ and $L/\mathbb{Q}_p$ be finite field extensions, and 
let $L/K$ be a totally ramified extension of degree $n>1$. Fix a positive integer $j$. Let $A,B$ be the rings of integers of $K,L$,
respectively,
and let $\mathbb{F}_q$ be their residue field. 
Let $\pi_A,\pi_B$ be uniformizers for $A,B$, respectively.
Let $\gamma: V^A \rightarrow V^B$ denote the ring map
classifying the underlying $A$-typical formal $A$-module of the universal $B$-typical formal $B$-module.
The least $h$ such that 
$\gamma(v_h^A)$ is nonzero modulo $(\pi_B, v_1^B, \dots ,v_{j-1}^B)$ is $h=jn$.
For $h=jn$ we have
\begin{eqnarray}
 \gamma(v_{jn}^A)
  \label{second equation for kappa} & \equiv & \frac{\pi_A}{\pi_B^n}(v_j^B)^{\frac{q^{jn}-1}{q^j-1}}\mod (\pi_B, v_1^B, \dots ,v_{j-1}^B).\end{eqnarray}
\end{prop}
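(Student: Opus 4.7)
The key recursion, derived from $\gamma(\ell_h^A)=\ell_h^B$ (which holds since $L/K$ is totally ramified, see Remark~\ref{remark on log coefficients}) together with Hazewinkel's formula~\ref{hazewinkel formula} applied to both $V^A$ and $V^B$, is
\[
\gamma(v_h^A) \;=\; \frac{\pi_A}{\pi_B}\,v_h^B \;+\; \sum_{i=1}^{h-1}\ell_i^B\!\left(\frac{\pi_A}{\pi_B}(v_{h-i}^B)^{q^i} - \gamma(v_{h-i}^A)^{q^i}\right),
\]
which is precisely equation~\ref{hazewinkel log coeff eq 2} from the proof of Proposition~\ref{rational surjectivity}. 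Since $L/K$ is totally ramified of degree $n$, one has $\pi_A=u\pi_B^n$ for some unit $u\in B^{\times}$, so $\pi_A/\pi_B^k$ has $\pi_B$-adic valuation exactly $n-k$; in particular $\pi_A/\pi_B$ lies in $\pi_B B$ because $n\geq 2$. This single numerical fact drives the entire computation.

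My plan is a single induction on $h$ that proves both claims simultaneously: that $\gamma(v_h^A)\equiv 0$ modulo $I:=(\pi_B,v_1^B,\dots,v_{j-1}^B)$ for $h<jn$, and that $\gamma(v_{jn}^A)\equiv \tfrac{\pi_A}{\pi_B^n}(v_j^B)^{(q^{jn}-1)/(q^j-1)}$ modulo $I$. Substituting the closed form for $\ell_i^B$ from Proposition~\ref{hazewinkels formula} and the inductive values of $\gamma(v_{h-i}^A)$ into the recursion, and reorganizing the result as a sum indexed by sequences $(i_1,\dots,i_s)$ of positive integers with $i_1+\cdots+i_s=h$, each sequence carries a coefficient whose net $\pi_B$-adic valuation determines whether the associated monomial in the $v_k^B$'s survives to a nonzero class in $V^B/I$. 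I expect to show that, modulo $I$, every sequence drops out except those consisting entirely of copies of $j$; for total weight $h<jn$ there are no such sequences (since a sum of copies of $j$ that equals $h$ requires $j\mid h$ and at least $h/j\geq n$ parts), while for $h=jn$ the unique surviving sequence is $(j,j,\dots,j)$ of length $n$, whose coefficient computes to $u=\pi_A/\pi_B^n$ and whose associated monomial is $(v_j^B)^{1+q^j+q^{2j}+\cdots+q^{j(n-1)}}=(v_j^B)^{(q^{jn}-1)/(q^j-1)}$.

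The main obstacle will be making the cancellation argument rigorous. Individual summands in the recursion typically have negative $\pi_B$-valuation because of the $\pi_B^{-r}$ factors in $\ell_i^B$, so one cannot estimate term-by-term: integrality of $\gamma(v_h^A)\in V^B$ is only visible after substantial cancellation. The careful bookkeeping consists in combining the full expansion of the recursion, the exact valuation $v_{\pi_B}(\pi_A/\pi_B^k)=n-k$, and the inductive expressions for $\gamma(v_{h'}^A)$ with $h'<h$, to isolate the surviving contributions modulo $I$. One should also verify that powers $\gamma(v_{h-i}^A)^{q^i}$ do not produce unexpected nonzero classes modulo $I$; this uses the Frobenius identity $(x+y)^{q^i}\equiv x^{q^i}+y^{q^i}\bmod p$ and the inductive vanishing of $\gamma(v_{h'}^A)$ modulo $I$ for $h'<jn$.

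A good conceptual sanity check, though not itself a proof, comes from the underlying formal group. Modulo $I$ the universal $B$-typical formal $B$-module has $[\pi_B]$-series with leading term $v_j^B X^{q^j}$, so $n$-fold iteration gives $[\pi_B^n]$-series with leading term $(v_j^B)^{(q^{jn}-1)/(q^j-1)}X^{q^{jn}}$; after multiplying by the unit $u$ to convert $[\pi_B^n]$ to $[\pi_A]_{F^A}$, and reading off the coefficient of $X^{q^{jn}}$ via the Hazewinkel presentation of $V^A$, one recovers $\gamma(v_{jn}^A)$ modulo $I$, matching equation~\ref{second equation for kappa} exactly.
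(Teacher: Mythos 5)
Your plan is essentially the paper's proof: both expand $\gamma(v_h^A)$ via Hazewinkel's formula (Proposition~\ref{hazewinkels formula}) as a sum over compositions $(i_1,\dots,i_r)$ of $h$, run an induction showing $\gamma(v_{h'}^A)$ lies in $(\pi_B, v_1^B,\dots,v_{j-1}^B)$ for $h'<h\leq jn$, observe that modulo this ideal only the composition $(j,\dots,j)$ of length $h/j$ can contribute, and use that $\pi_A/\pi_B^r$ has $\pi_B$-adic valuation $n-r$ to conclude the first unit coefficient occurs at $h=jn$ with monomial $(v_j^B)^{(q^{jn}-1)/(q^j-1)}$. The cancellation subtlety you flag is equally present in the paper's version (its displayed expansion also has non-integral summands and the reduction modulo the ideal is asserted rather than unpacked), and your closing $[\pi_B]$-series heuristic is a welcome conceptual check that the paper does not include.
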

\begin{proof} 
Write $I^B_j$ for the ideal $(\pi_B, v_1^B, \dots ,v_{j-1}^B)$ in $V^B$.
Clearly $\gamma(v_0^A) = \gamma(\pi_A)$ is zero modulo $I^B_j$.
That was the initial step in an induction. For the inductive step,
let $h$ be some positive integer, $h\leq jn$, and suppose that we have already shown that
$\gamma(v_{h^{\prime}}^A) \in I^B_j$ for all $h^{\prime}<h$. 
Using Proposition~\ref{hazewinkels formula} we have
\[ \gamma(v_h^A) = \frac{\pi_A}{\pi_B}v_h^B +  \sum_{i_1+\dots+i_r=h\mbox{\ and\ }r>1}\left(\frac{\pi_A}{\pi_B}v_{i_1}^B
\prod_{m=2}^r\frac{(v_{i_m}^B)^{q^{\sum_{s=1}^{m-1}i_s}}}{\pi_B} -
\gamma(v_{i_1}^A)\prod_{m=2}^r\frac{\gamma(v_{i_m}^A)^{q^{\sum_{s=1}^{m-1}i_s}}}{\pi_A}\right),\]
which is zero in 
$V^B/I^B_j$ unless $j\mid h$. Assume $j\mid h$; then we have
\begin{eqnarray*} \gamma(v_h^A) & \equiv & 
\frac{\pi_A}{\pi_B}v_j^B\prod_{m=2}^{h/j}\frac{(v_j^B)^{q^{(m-1)j}}}{\pi_B} \mod I^B_j \\
 & \equiv & \frac{\pi_A}{(\pi_B)^{h/j}}(v_j^B)^{\frac{q^h-1}{q^j-1}}\mod I^B_j \end{eqnarray*}
and $\nu_{\pi_B}\left(\frac{\pi_A}{(\pi_B)^{h/j}}\right) > 0$ if and only if $\frac{h}{j} < n$; since, in this case,
$r = \frac{h}{j}>1$ and $n>1$, we cannot have $\frac{h}{j}> n$, so the lowest $h$ such that 
$\gamma(v_h^A) \notin I^B_j$
is $h = jn$. This gives us equation~\ref{second equation for kappa}. 
\end{proof}

\begin{theorem}\label{torsion-free and local coh not fin pres}
Let $K/\mathbb{Q}_p$ be a finite, totally ramified extension of degree greater than one, and with ring of integers $A$.
Let $X$ be a spectrum such that $BP_*(X)$ is the underlying graded $BP_*$-module of a graded $V^A$-module.
Suppose that either $BP_*(X)$ is $p$-power-torsion, or that 
$BP_*(X)$ is torsion-free as an abelian group, i.e., $BP_*(X)$ is $p$-torsion-free.
Then $BP_*(X)$ is not finitely presented as a $BP_*$-module, and $BP_*(X)$ is not finitely presented as a $V^A$-module.
\end{theorem}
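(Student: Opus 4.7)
Set $M := BP_*(X)$; we assume $M \neq 0$, since the zero module is trivially finitely presented. The plan is to reduce both statements, in both cases, to Lemma~\ref{finite presentation and power-torsion}, which forbids a nonzero finitely presented graded module over a polynomial ring $A[x_0, x_1, \ldots]$ (with $A$ local and the $x_i$ in strictly increasing degrees) from being $x_n$-power-torsion for every $n$. Applied to $BP_* = \mathbb{Z}_{(p)}[v_1, v_2, \ldots]$ and $V^A = A[v_1^A, v_2^A, \ldots]$, this reduces the problem to exhibiting $v_n$-power-torsion and $v_n^A$-power-torsion on $M$ (or, in the $p$-torsion-free case, on $M/pM$).

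In the first case $M$ is $p$-power-torsion, so $BP_*(LX) \cong \mathbb{Q} \otimes M$ vanishes and the rational spectrum $LX$ is contractible. Theorem~\ref{main thm on tot ram nonexistence} then identifies $X$ with its dissonant part, and Proposition~\ref{dissonance lemma} gives that $M$ is $v_n$-power-torsion for every $n \geq 0$; together with $p$-power-torsion this already yields non-finite-presentation over $BP_*$. To upgrade to the $V^A$ statement, fix $x \in M$ and let $J_x := \mathrm{ann}_{V^A}(x)$. Since $p = \pi^e u$ with $u \in A^{\times}$, $p \in \sqrt{J_x}$ implies $\pi \in \sqrt{J_x}$, and likewise $\gamma(v_n) \in \sqrt{J_x}$ for every $n$. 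So the ideal $I := (\pi, \gamma(v_1), \gamma(v_2), \ldots)$ is contained in $\sqrt{J_x}$. The key claim is that $\sqrt{I}$ equals the unique maximal graded ideal $\mathfrak{m} := (\pi, v_1^A, v_2^A, \ldots)$ of $V^A$: the inclusion $I \subseteq \mathfrak{m}$ is clear since each $\gamma(v_n)$ has positive degree, while the reverse inclusion follows by induction on $j$ from Proposition~\ref{action of v in tot ram case} applied to $\mathbb{Q}_p \subset K$, which gives $\gamma(v_{je}) \equiv u_j (v_j^A)^{m_j}$ modulo $(\pi, v_1^A, \ldots, v_{j-1}^A)$ with $e = [K:\mathbb{Q}_p]$, $u_j \in A^{\times}$, and $m_j = (q^{je}-1)/(q^j-1)$. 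This forces $v_n^A \in \sqrt{J_x}$ for all $n$, so $M$ is $v_n^A$-power-torsion, and Lemma~\ref{finite presentation and power-torsion} closes the $V^A$ case.

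In the second case $M$ is $p$-torsion-free, and my strategy is to work with the reduction $M/pM$. Corollary~\ref{torsion-free and local coh} gives that $M/pM$ is $v_n$-power-torsion for every $n$, and the radical argument above, applied now inside $V^A/p = (A/\pi^e)[v_1^A, v_2^A, \ldots]$ (where $\pi$ is nilpotent), upgrades this to $v_n^A$-power-torsion on $M/pM$. If $M$ were finitely presented over $BP_*$ or $V^A$, then $M/pM$ would be finitely presented over $BP_*/p = \mathbb{F}_p[v_1, v_2, \ldots]$ or $V^A/p$, and Lemma~\ref{finite presentation and power-torsion} would force $M/pM = 0$. But finite generation of $M$ over either ring makes each graded piece $M_n$ finitely generated over the local degree-zero subring $\mathbb{Z}_{(p)}$ or $A$, so $M = pM$ combined with Nakayama's lemma applied in each degree yields $M = 0$, contradicting $M \neq 0$. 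The main obstacle is the radical computation $\sqrt{I} = \mathfrak{m}$; this is exactly the point at which total ramification enters essentially, via Proposition~\ref{action of v in tot ram case}, ensuring that the images $\gamma(v_n)$ together with $\pi$ generate an ideal whose radical covers every $v_j^A$.
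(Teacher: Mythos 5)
Your proof is correct, and the overall reduction strategy matches the paper's: both cases are funneled into Lemma~\ref{finite presentation and power-torsion}, using Theorem~\ref{main thm on tot ram nonexistence} and Proposition~\ref{dissonance lemma} in the $p$-power-torsion case, Corollary~\ref{torsion-free and local coh} in the $p$-torsion-free case, and Proposition~\ref{action of v in tot ram case} as the essential input from total ramification. Where you diverge from the paper is in the mechanism for upgrading $v_n$-power-torsion (over $BP_*$) to $v_n^A$-power-torsion (over $V^A$). The paper filters $BP_*(X)/pBP_*(X)$ by the powers of $\pi$, runs a module-level induction over the subquotients $\pi^i M/\pi^{i+1}M$, and stitches the result together with Lemma~\ref{power torsion preserved under extension}. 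You instead isolate a purely ring-theoretic statement: the ideal $I = (\pi, \gamma(v_1), \gamma(v_2), \ldots) \subseteq V^A$ has radical equal to the graded maximal ideal $\mathfrak{m} = (\pi, v_1^A, v_2^A, \ldots)$. Once that is established (by the same induction on $j$ via Proposition~\ref{action of v in tot ram case}, but now as a computation inside $V^A$ rather than on a module), the upgrade is immediate: for each homogeneous $x$, the radical of $\mathrm{ann}_{V^A}(x)$ already contains $I$, hence contains $\mathfrak{m}$, hence contains every $v_n^A$. This is cleaner than the paper's filtration argument, it avoids invoking Lemma~\ref{power torsion preserved under extension}, and it distills the algebraic content of Proposition~\ref{action of v in tot ram case} into a single radical identity that could be reused elsewhere. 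You also make explicit the Nakayama step (ruling out $M/pM = 0$ when $M \neq 0$ and $M$ is finitely generated with graded pieces finitely generated over the local degree-zero subring), which the paper passes over silently; this is a genuine, if minor, gap in the paper's exposition that your write-up closes.
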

\begin{proof}
\begin{itemize}
\item Suppose first that $BP_*(X)$ is $p$-power-torsion. 
Then by Lemma~\ref{acyclicity lemma}, $L_{E(0)}X$ is contractible,
so $X$ is $E(0)$-acyclic and does not map nontrivially to a rational spectrum. So, by Theorem~\ref{main thm on tot ram nonexistence},
$X$ is dissonant, and $BP_*(X)$ is $v_n$-power-torsion for all $n\geq 0$. Now Lemma~\ref{finite presentation and power-torsion}
gives us that $BP_*(X)$ is not finitely presented as a $BP_*$-module. 

Similarly, if $[K :\mathbb{Q}_p] = d$, then $v_{dn}$ acts by a power of $v_n^A$ on $V^A/(\pi, v_1^A, \dots , v_{n-1}^A)$,
by Proposition~\ref{action of v in tot ram case}.
Consequently a finitely presented $V^A$-module cannot be $v_{dn}$-power-torsion for all $n\geq 0$.
So $BP_*(X)$ cannot be finitely presented as a $V^A$-module.
\item 
Now suppose that $BP_*(X)$ is $p$-torsion-free and finitely presented
as a $BP_*$-module.
Then $BP_*(X)/pBP_*(X)$ is also finitely presented as a $BP_*$-module, 
and by Corollary~\ref{torsion-free and local coh},
it is also $v_n$-power-torsion for all $n\geq 0$.
This contradicts Lemma~\ref{finite presentation and power-torsion}.

Suppose instead that $BP_*(X)$ is $p$-torsion-free and finitely presented
as a $V^A$-module.
Then $BP_*(X)/pBP_*(X)$ is also finitely presented as a $V^A$-module, 
and by Corollary~\ref{torsion-free and local coh},
it is also $v_n$-power-torsion for all $n\geq 0$.
Let $d = [K :\mathbb{Q}_p]$, and let
$\gamma: BP_* \rightarrow V^A$ be the map classifying the underlying $p$-typical formal group law of the universal $A$-typical
formal $A$-module.
By Proposition~\ref{action of v in tot ram case},
$\gamma(v_{dn})$ is congruent to a unit times a power of $v_n^A$ modulo $(\pi, v_1^A, \dots ,v_{n-1}^A)$. 
I claim that this implies that $BP_*(X)/pBP_*(X)$ is $v_n^A$-power-torsion for all $n\geq 0$.
The proof is as follows:
\begin{itemize}
\item Since $BP_*(X)/pBP_*(X)$ is $v_n$-power-torsion for all $n\geq 0$, the graded $V^A$-module
$\pi^i BP_*(X)/\pi^{i+1} BP_*(X)$ is also $v_n$-power-torsion for all $n\geq 0$
and all $i = 0, 1, \dots , d-1$,
by Lemma~\ref{quotient of power torsion is power torsion}. 
\item Since $\gamma(v_d)$ is congruent to a unit times $v_1^A$ modulo $\pi$ and since \linebreak $\pi^i BP_*(X)/\pi^{i+1} BP_*(X)$ is $v_d$-power-torsion 
for all $i = 0, 1, \dots , d-1$, we have that $\pi^i BP_*(X)/\pi^{i+1} BP_*(X)$ is also $v_1^A$-power torsion for all $i = 0, 1, \dots , d-1$.
\item That was the initial step in an induction, which proceeds as follows: suppose we have already shown that $\pi^i BP_*(X)/\pi^{i+1} BP_*(X)$ is $v_j^A$-power-torsion for all $j = 1, \dots, n-1$ and all $i = 0, 1, \dots , d-1$.
By Proposition~\ref{action of v in tot ram case}, 
\[ \gamma(v_{dn}) \equiv \frac{p}{\pi^d}(v_n^A)^{\frac{p^{dn}-1}{p^n-1}} + v_1^Ax_1
 + v_2^Ax_2 + \dots + v_{n-1}^Ax_{n-1}\mod \pi\]
for some $x_1, \dots ,x_{n-1}\in V^A$. 
For a given homogeneous $x\in\pi^iBP_*(X)/\pi^{i+1}BP_*(X)$, choose positive integers $\epsilon_1, \dots , \epsilon_{n-1}$ such that $(v_i^A)^{\epsilon_i}x = 0$, 
and choose a positive integer $\epsilon$ such that $v_{dn}^{\epsilon}x = 0$.
Let $\epsilon^{\prime}$ be any positive integer such that
$p^{\epsilon^{\prime}} >\epsilon$
and such that $p^{\epsilon^{\prime}} >\epsilon_j$ for all $j=1,\dots ,n-1$.
Then 
\begin{align*} 0 &= v_{dn}^{p^{\epsilon^{\prime}}} x \\
 &= \left( \frac{p}{\pi^d}(v_n^A)^{\frac{p^{dn}-1}{p^n-1}}\right)^{p^{\epsilon^{\prime}}}x + \sum_{j=1}^{n-1} \left( v_j^Ax_j \right)^{p^{\epsilon^{\prime}}}x \\
 &= \left(\frac{p}{\pi^d}\right)^{p^{\epsilon^{\prime}}} (v_n^A)^{p^{\epsilon^{\prime}}\frac{p^{dn}-1}{p^n-1}} x,\end{align*}
and since $\frac{p}{\pi^d}\in \mathbb{F}_p^{\times}$, we have that $(v_n^A)^{p^{\epsilon^{\prime}}\frac{p^{dn}-1}{p^n-1}} x = 0$.
Hence $\pi^iBP_*(X)/\pi^{i+1}BP_*(X)$ is $v_n^A$-power-torsion for all $i=0, \dots ,d-1$, completing the inductive step. Hence $\pi^iBP_*(X)/\pi^{i+1}BP_*(X)$ is $v_n^A$-power-torsion for all $i=0, \dots ,d-1$ and for all $n\geq 0$.
\item Now repeated application of Lemma~\ref{power torsion preserved under extension} to the finite tower of extensions of graded $V^A$-modules
\[\xymatrix{ 
 \pi^{d-1}BP_*(X)/\pi^dBP_*(X) \ar@{^{(}->}[r] & BP_*(X)/pBP_*(X) \ar@{->>}[d] \\
 \pi^{d-2}BP_*(X)/\pi^{d-1}BP_*(X) \ar@{^{(}->}[r] & BP_*(X)/\pi^{d-2}BP_*(X) \ar@{->>}[d] \\
 & \vdots \\
 \pi BP_*(X)/\pi^2 BP_*(X) \ar@{^{(}->}[r] & BP_*(X)/\pi^2 BP_*(X) \ar@{->>}[d] \\
 BP_*(X)/\pi BP_*(X) \ar[r]^{\cong} & BP_*(X)/\pi BP_*(X)}\]
shows that $BP_*(X)/pBP_*(X)$ is $v_n^A$-power-torsion for all $n\geq 0$.
\end{itemize}
Again Lemma~\ref{finite presentation and power-torsion} then gives us a contradiction: $BP_*(X)$ cannot be finitely presented as a $V^A$-module.
\end{itemize}
\end{proof}

\begin{corollary}\label{nonrealizability of VA}
Let $K/\mathbb{Q}_p$ be a finite field extension of degree greater than one, and with ring of integers $A$.
Then there does not exist a spectrum $X$ such that $BP_*(X) \cong V^A$
as graded $BP_*$-modules.
\end{corollary}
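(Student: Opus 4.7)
The plan is to split into two cases depending on whether $K/\mathbb{Q}_p$ is totally ramified, since each case then reduces cleanly to a nonrealizability result already proven in the paper. A finite extension $K/\mathbb{Q}_p$ of degree $>1$ is either not totally ramified or totally ramified of degree $>1$, and each possibility contradicts the existence of a spectrum $X$ with $BP_*(X) \cong V^A$.

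If $K/\mathbb{Q}_p$ is not totally ramified, I will simply invoke Corollary~\ref{unramified VA nonrealizability}, which asserts exactly the statement we want in this case. (The mechanism there is that the residue degree $f>1$ forces $\gamma(v_i)=0$ whenever $f\nmid i$, so by Theorem~\ref{unramified dissonance thm} any realizing spectrum would be dissonant, while $V^A$ is manifestly not $p$-power-torsion.)

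If $K/\mathbb{Q}_p$ is totally ramified of degree $>1$, I will appeal to Theorem~\ref{torsion-free and local coh not fin pres}. The key observations are that $V^A \cong A[v_1^A,v_2^A,\dots]$ is torsion-free as an abelian group (being a polynomial ring over the characteristic-zero DVR $A$), and that $V^A$ is trivially finitely presented as a module over itself --- it is free of rank one. Assuming for contradiction that a spectrum $X$ with $BP_*(X)\cong V^A$ exists, the tautological $V^A$-module structure on $V^A$ pulls back to a $V^A$-module structure on $BP_*(X)$, so the hypotheses of Theorem~\ref{torsion-free and local coh not fin pres} are satisfied. That theorem then concludes that $BP_*(X)$ is \emph{not} finitely presented as a $V^A$-module, contradicting finite presentation of the free module of rank one. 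Hence no such $X$ exists.

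I do not expect any real obstacle here, since all the technical content --- the eventual-$v_n$-division lemma, the rationalization-plus-dissonance decomposition, and the identification of $\gamma(v_{dn})$ modulo $I_n$ as (a unit times) a power of $v_n^A$ --- is already bundled into Theorem~\ref{torsion-free and local coh not fin pres}. As a sanity check, a more direct alternative in the totally ramified case is to apply Corollary~\ref{torsion-free and local coh} to conclude that $V^A/pV^A$ would have to be $v_n$-power-torsion for every $n$; but by Proposition~\ref{action of v in tot ram case}, $\gamma(v_d)$ acts on $V^A/pV^A$ as $(p/\pi^d)$ times $(v_1^A)^{(p^d-1)/(p-1)}$, so the unit $1\in V^A/pV^A$ supports every power of $v_d$, contradicting $v_d$-power-torsion. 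Either route yields the claim.
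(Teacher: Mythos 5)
Your proof is correct and takes essentially the same route as the paper, which disposes of the corollary in one line by citing Corollary~\ref{unramified VA nonrealizability} (not-totally-ramified case) together with Theorem~\ref{torsion-free and local coh not fin pres} (totally ramified case). You have merely spelled out the casework and the observation that $V^A$ is $p$-torsion-free and finitely presented over itself, which is exactly the intended reading.
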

\begin{proof}
Immediate from Corollary~\ref{unramified VA nonrealizability} together with Theorem~\ref{torsion-free and local coh not fin pres}.
\end{proof}

\section{Conclusions.}

\begin{theorem}{\bf (Nonexistence of Ravenel's algebraic spheres, local case.)}
Let $K$ be a finite extension of $\mathbb{Q}_p$ of degree $>1$, with ring of integers $A$.
Then there does not exist a spectrum $X$ such that $MU_*(X) \cong L^A$.
\end{theorem}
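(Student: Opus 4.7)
The plan is to reduce to the $p$-typical local nonrealizability results proved earlier, namely Theorem~\ref{unramified dissonance thm} and Theorem~\ref{main thm on tot ram nonexistence}. Suppose for contradiction that $X$ is a spectrum with $MU_*(X)\cong L^A$. Then $L^A$ is a $\mathbb{Z}_p$-algebra (hence $p$-local) and so $X$ is $p$-local. Using that the Brown-Peterson splitting realizes $BP$ as a retract of $MU_{(p)}$ via the Quillen idempotent, smashing with $X$ exhibits $BP_*(X)$ as a direct summand of $MU_{(p)*}(X)\cong L^A$, namely the summand of $p$-typical information, which one identifies with $V\otimes_L L^A$.

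The crucial algebraic point is that the classifying ring $V^A$ of $A$-typical formal $A$-module laws injects into $BP_*(X)$ as a subring. Indeed, the $A$-typification map $L^A\twoheadrightarrow V^A$ has a section $V^A\hookrightarrow L^A$ classifying an $A$-typical formal $A$-module as a formal $A$-module; since every $A$-typical formal $A$-module is in particular $p$-typical, this section takes values in the $p$-typical summand $BP_*(X)\subseteq L^A$. The composite $V^A\hookrightarrow BP_*(X)\twoheadrightarrow V^A$ is still the identity ($A$-typification of something already $A$-typical is itself), so $V^A$ is a retract of $BP_*(X)$ and in particular embeds injectively.

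With this structural information, both cases of the theorem close rapidly. If $K/\mathbb{Q}_p$ is not totally ramified, Theorem~\ref{unramified dissonance thm} forces $BP_*(X)$ to be $p$-power-torsion; but the injective ring map $V^A\hookrightarrow BP_*(X)$ sends $1$ to $1$, and $V^A$ is $p$-torsion-free (being a polynomial algebra over $A\supseteq \mathbb{Z}_p$), so the element $1\in BP_*(X)$ cannot be $p$-power-torsion, a contradiction. If $K/\mathbb{Q}_p$ is totally ramified, Theorem~\ref{main thm on tot ram nonexistence} exhibits $X$ as an extension of a rational spectrum $LX$ by a dissonant spectrum, so the quotient $BP_*(X)/(p\text{-power-torsion})$ embeds into the $\mathbb{Q}$-vector space $BP_*(LX)$; then $p$-torsion-freeness of $V^A$ yields an embedding of $V^A$ into a $\mathbb{Q}$-vector space, which is impossible since $V^A$ contains $\mathbb{Z}_p$ and is not $p$-divisible. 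The main obstacle in executing this plan will be the algebraic identification in the second paragraph---in particular, verifying that the Brown-Peterson summand inclusion $BP_*(X)\hookrightarrow L^A$ really does contain the $A$-typical subring $V^A$, and that the $A$-typification retraction of $L^A$ onto $V^A$ factors compatibly through this summand.
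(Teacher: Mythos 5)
Your proposal has a genuine gap, and it is essentially the gap you yourself flag in your last sentence, but it is more serious than you suggest.

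The critical error is the identification of the Brown--Peterson summand $BP_*(X)\subseteq MU_{(p)*}(X)\cong L^A$ with ``the $p$-typical part of $L^A$.'' The Quillen idempotent $e$ is a natural transformation of the functor $MU_{(p)*}(-)$ on spectra; it is $BP_*$-linear but not $MU_{(p)*}$-linear. The hypothesized isomorphism $MU_*(X)\cong L^A$ is merely an abstract isomorphism of graded $MU_*$-modules, not one that intertwines the Quillen idempotent with any formal-group-theoretic operation on $L^A$. Transported along this unnatural isomorphism, $e_*$ becomes \emph{some} $BP_*$-linear idempotent on $L^A$, but nothing forces it to be the $p$-typicalization idempotent or to have image containing the section $V^A\hookrightarrow L^A$. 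So the claim that $V^A$ is a retract of $BP_*(X)$ does not follow. Moreover, even granting a retraction $V^A\hookrightarrow BP_*(X)$, this does not make $BP_*(X)$ a $V^A$-\emph{module}, which is what Theorems~\ref{unramified dissonance thm} and~\ref{main thm on tot ram nonexistence} require as a hypothesis. Finally, the closing step of your totally-ramified case is false as stated: $V^A$ \emph{does} embed into a $\mathbb{Q}$-vector space (its rationalization $p^{-1}V^A$), so ``$V^A$ contains $\mathbb{Z}_p$ and is not $p$-divisible'' is not an obstruction to the existence of such an embedding, only to $V^A$ being a \emph{retract} of a $\mathbb{Q}$-vector space, which you have not produced.

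The paper sidesteps all three issues by a small but decisive maneuver: rather than trying to locate $V^A$ inside the summand $BP_*(X)$, it forms $Y=\coprod_{0\leq n\neq p^k-1}\Sigma^{2n}X$, so that $BP_*(Y)\cong MU_{(p)*}(X)\cong L^A$ on the nose. Since $L^A$ is (by Cartier typification) a \emph{free} $V^A$-module, $BP_*(Y)$ is a $V^A$-module and the nonrealizability theorems apply directly to $Y$. In the non-totally-ramified case this already kills $Y$, since $V^A$ is $p$-torsion-free. In the totally ramified case freeness of $BP_*(Y)$ over $V^A$ is used in an essential way: the acyclization $cY$ is dissonant, and $BP_*(\Sigma cY)\cong H^1_{(p)}(BP_*(Y))$ is a direct sum of copies of $p^{-1}V^A/V^A$, which Proposition~\ref{action of v in tot ram case} shows is $v_{[K:\mathbb{Q}_p]}$-torsion-free, contradicting dissonance. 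That last step has no analogue in your sketch; you will need the freeness, not just a single copy of $V^A$, to make the totally ramified case close.
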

\begin{proof}
This is a proof by contrapositive: suppose that a spectrum $X$ exists so that $MU_*(X)\cong L^A$. Let $q$ denote the cardinality of the residue
field of $A$.
We have an isomorphism of graded $V^{A}$-modules
\[ L^{A} \cong \coprod_{0\leq n\neq q^k-1}  \Sigma^{2n} V^{A}\]
given by Cartier typification (see 21.7.17 of~\cite{MR506881} for this).
Now, since $MU_*(X)$ is already $p$-local, 
we have the isomorphism of graded $MU_*$-modules
\begin{align*}
 MU_*(X) & \cong  \coprod_{0\leq n\neq p^k-1} \Sigma^{2n} BP_*(X) .\end{align*}
Let $Y$ be the coproduct spectrum $Y = \coprod_{0\leq n\neq p^k-1} \Sigma^{2n} X$,
so that we have isomorphisms of graded $BP_*$-modules
\begin{align*}
 BP_*(Y) 
  & \cong  \coprod_{0\leq n\neq p^k-1} \Sigma^{2n} BP_*(X) \\
  & \cong  MU_*(X) \\
  & \cong  L^A \\
  & \cong  \coprod_{0\leq n\neq q^k-1} \Sigma^{2n} V^A .\end{align*}
So the graded $V^A$-module $\coprod_{0\leq n\neq q^k-1} \Sigma^{2n} V^A$ is the $BP$-homology of the spectrum $Y$. But
$\coprod_{0\leq n\neq q^k-1} \Sigma^{2n} V^A$ is a free $V^A$-module.
\begin{itemize}
\item
By Theorem~\ref{unramified dissonance thm}, if $K/\mathbb{Q}_p$ is not totally ramified, then $Y$ must be dissonant, and in particular, $BP_*(Y)$ must be $p$-power-torsion; but this contradicts $BP_*(Y)$ being a free $V^A$-module, since $V^A$ is $p$-torsion free.
\item
On the other hand, if $K/\mathbb{Q}_p$ is totally ramified, then by 
Theorem~\ref{main thm on tot ram nonexistence}, the fiber $cY$ of the rationalization map $Y \rightarrow LY$ is dissonant. Since $BP_*(Y)$ is free and $BP_*(LY) \cong p^{-1}BP_*(Y)$, we have a commutative diagram of graded $BP_*$-modules with exact rows
\[\xymatrix{
 0 \ar[r]\ar[d] & BP_*(Y) \ar[r]\ar[d]^{\cong} & BP_*(LY) \ar[r] \ar[d]^{\cong} & BP_*(\Sigma cY) \ar[r]\ar[d]^{\cong} &  0\ar[d] \\
 0 \ar[r] & BP_*(Y) \ar[r] & p^{-1}BP_*(Y) \ar[r] & H^1_{(p)}(BP_*(Y)) \ar[r] & 0 }\]
and, again since $BP_*(Y)$ is a free $V^A$-module, 
$BP_*(\Sigma cY)$ is isomorphic to a direct sum of copies of $H^1_{(p)}(V^A) \cong p^{-1}V^A/V^A$.
Let $n = [ K : \mathbb{Q}_p]$. Then Proposition~\ref{action of v in tot ram case} implies that $v_{n}$ acts on $V^A$ as $\frac{p}{\pi^n}(v_1^A)^{\frac{p^n-1}{p-1}}$ modulo $\pi$. Since $\frac{p}{\pi^n}\in \mathbb{F}_p^{\times}$, we now have that $p^{-1}V^A/V^A$ is $v_n$-torsion-free, consequently $BP_*(\Sigma cY)$ is $v_n$-torsion-free. But by Proposition~\ref{dissonance lemma}, $BP_*(\Sigma cY)$ then cannot be dissonant, a contradiction.
\end{itemize}
So the spectrum $X$ such that $MU_*(X)\cong L^A$ cannot exist.
\end{proof}

The following lemma is classical (and stronger results along these lines are probably known to those who know more number theory than I do!):
\begin{lemma}\label{chebotarev density lemma}
Let $K/\mathbb{Q}$ be a finite extension of degree $>1$, with ring of integers $A$. Then there exist infinitely many prime numbers 
$p$ which neither ramify nor split completely in $A$.

If we furthermore assume that either $K/\mathbb{Q}$ is cyclic of prime power order or that $K/\mathbb{Q}$ is Galois, then there exists some prime number $p$ which neither ramifies nor splits completely in $A$,
and such that each prime of $A$ over $p$ is of the same degree. (That is, there exists some positive integer $n$ such that every prime of $A$ over $p$ is of degree $d$.)
\end{lemma}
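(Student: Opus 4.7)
The plan is to pass to the Galois closure of $K/\mathbb{Q}$ and apply the Chebotarev density theorem. I will write $L$ for the Galois closure of $K$ in a fixed algebraic closure of $\mathbb{Q}$, set $G = \Gal(L/\mathbb{Q})$ and $H = \Gal(L/K)$, so that $K = L^H$ and $[G:H] = [K:\mathbb{Q}] > 1$. Since only the finitely many primes dividing the discriminant of $L/\mathbb{Q}$ ramify in $L$ (and hence a fortiori in $A$), I can discard them throughout with no effect on either assertion.

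For the first assertion, I will use the standard orbit description of splitting: for $p$ unramified in $L$ with a chosen Frobenius element $\sigma_p \in G$, the primes of $A$ above $p$ biject with the orbits of $\langle \sigma_p\rangle$ acting on $H\backslash G$ by right multiplication, and the residue degree of such a prime equals the size of the corresponding orbit. The key observation is that $p$ splits completely in $A$ if and only if every orbit is a singleton, equivalently $\sigma_p$ lies in the normal core $H_L := \bigcap_{g\in G} gHg^{-1}$ of $H$ in $G$. Since $H$ is a proper subgroup of $G$, so is $H_L$, and the Chebotarev density theorem applied to $L/\mathbb{Q}$ then implies that the set of unramified primes $p$ with $\sigma_p \notin H_L$ has positive density $1 - |H_L|/|G|$. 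This yields infinitely many primes that neither ramify nor split completely in $A$, as required.

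For the second assertion I will note that both hypotheses---``$K/\mathbb{Q}$ cyclic of prime power order'' and ``$K/\mathbb{Q}$ Galois''---reduce to the single case that $K/\mathbb{Q}$ is Galois, since any cyclic extension is abelian and hence Galois. In that case $L = K$ and $H$ is trivial, so the orbits of $\langle\sigma_p\rangle$ on $G$ under right multiplication are simply the right $\langle\sigma_p\rangle$-cosets, all of which have the common size $|\langle\sigma_p\rangle|$. Hence for every unramified $p$ the primes of $A$ above $p$ automatically share a common residue degree, so any prime supplied by the first assertion has the desired uniform-degree property. The only nontrivial input is the Chebotarev density theorem together with the orbit description of splitting; I do not anticipate any serious obstacle, though a little care is needed to identify ``splits completely'' cleanly with the normal core condition, which is precisely where the hypothesis $[K:\mathbb{Q}] > 1$ enters.
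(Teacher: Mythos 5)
Your proof is correct, and it takes a genuinely different route from the paper. The paper proves the first assertion by citing a textbook result (Corollary~5 of Proposition~7.16 of~\cite{MR2078267}) that infinitely many primes fail to split completely in any proper extension of~$\mathbb{Q}$, and then discards the finitely many ramified primes; for the second assertion it treats the two hypotheses separately, citing Corollary~VI.3.7 of~\cite{MR1697859} for infinitely many \emph{inert} primes in the cyclic-of-prime-power-order case, and using transitivity of the Galois action on primes over~$p$ in the Galois case. You instead prove everything at once from Chebotarev: passing to the Galois closure~$L$, identifying ``$p$ splits completely in~$A$'' with ``$\sigma_p$ lies in the normal core $\bigcap_g gHg^{-1}$ of $H = \Gal(L/K)$,'' and observing that this core is a proper subgroup because $H$ is proper, so the complementary Frobenius classes have positive density. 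This is cleaner and more self-contained, and it makes visible the mechanism (proper normal core) behind the reference the paper invokes. You also rightly observe that ``cyclic of prime power order'' is subsumed by ``Galois,'' since a cyclic extension is by definition Galois; the paper's separate treatment of that case is therefore not logically necessary, though it does let the paper cite a stronger conclusion (inertness) there. One small trade-off: your argument delivers a positive density of usable primes, which is more than the lemma asks for, whereas the paper's citations deliver exactly the stated ``infinitely many'' without invoking Chebotarev explicitly.
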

\begin{proof}
See Corollary~5 of Proposition~7.16 of~\cite{MR2078267} for the fact 
that, in any finite extension of $\mathbb{Q}$, there are infinitely many primes of $\mathbb{Z}$ which do not split completely. 
(On the other hand, unless $K/\mathbb{Q}$ is cyclic, then there are only finitely many primes of $\mathbb{Z}$ which do not split {\em at all} in $A$; see Exercise~VI.3.1 of~\cite{MR1697859}.) Only finitely many such primes can ramify in $A$; the rest neither ramify nor split completely.

If $K/\mathbb{Q}$ is cyclic of prime power order, then there exist infinitely many primes of $\mathbb{Z}$ which do not split {\em at all} in $A$; see Corollary~VI.3.7 of~\cite{MR1697859}. Only finitely many such primes ramify in $A$, and the rest neither ramify nor split at all; any one of those primes will satisfy the claim made in the statement of the lemma.

On the other hand, if $K/\mathbb{Q}$ is Galois, then the Galois group $\Gal(K/\mathbb{Q})$ acts transitively on the set of primes of $A$ over $p$. Hence every prime of $A$ over $p$ has the same degree, and any prime $p$ which neither ramifies nor splits completely in $A$ will satisfy the claim made in the statement of the lemma.
\end{proof}

Lemma~\ref{completion lemma} is another easy one and which is definitely not new, but for which I do not know a reference in the literature.
\begin{lemma}\label{completion lemma}
Let $A$ be a Noetherian commutative ring, 
and let $\mathfrak{m}_1, \dots ,\mathfrak{m}_n$ be a finite set of maximal ideals of $A$.
Let $\mathfrak{m}$ denote the product of these ideals,
\[ \mathfrak{m} = \mathfrak{m}_1\cdot \mathfrak{m}_2\cdot \dots \cdot \mathfrak{m}_n\subseteq A.\]
Suppose that the ring $A/\mathfrak{m}$ is Artinian.
Then there is an isomorphism of $A$-algebras 
\[ \hat{A}_{\mathfrak{m}} \cong \prod_{i=1}^n \hat{A}_{\mathfrak{m}_i}\]
between the $\mathfrak{m}$-adic completion of $A$ and the product of the $\mathfrak{m}_i$-adic completions of $A$ for all $i$
between $1$ and $n$.

Furthermore, for any finitely generated $A$-module $M$, the $\mathfrak{m}$-adic completion of $M$ is isomorphic to the
product of the $\mathfrak{m}_i$-adic completions of $M$ for all $i$ between $1$ and $n$, i.e.,
\[ \hat{M}_{\mathfrak{m}} \cong \prod_{i=1}^n \hat{M}_{\mathfrak{m}_i}.\]
This isomorphism is natural in the choice of finitely generated $A$-module $M$.
\end{lemma}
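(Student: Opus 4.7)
The plan is straightforward and relies on two standard ingredients: the Chinese Remainder Theorem applied level by level, together with the fact that inverse limits commute with finite products. The key algebraic input is that distinct maximal ideals are comaximal, so the ideals $\mathfrak{m}_1, \dots, \mathfrak{m}_n$ are pairwise comaximal, and consequently so are their powers $\mathfrak{m}_i^k$ for every integer $k \geq 1$.

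First I would verify the identities
\[ \mathfrak{m}^k = \mathfrak{m}_1^k \cdot \mathfrak{m}_2^k \cdots \mathfrak{m}_n^k = \bigcap_{i=1}^n \mathfrak{m}_i^k \]
for each positive integer $k$. The first equality follows from commutativity of ideal multiplication in a commutative ring, and the second is the standard fact that pairwise comaximal ideals have intersection equal to their product (proved by an easy induction on $n$). In particular, the $\mathfrak{m}$-adic topology on $A$ coincides with the coarsest topology refining all of the $\mathfrak{m}_i$-adic topologies simultaneously, and similarly for $M$.

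Next, by the Chinese Remainder Theorem applied to the pairwise comaximal ideals $\mathfrak{m}_1^k, \dots, \mathfrak{m}_n^k$, there are natural isomorphisms
\[ A/\mathfrak{m}^k \cong \prod_{i=1}^n A/\mathfrak{m}_i^k \quad\text{and}\quad M/\mathfrak{m}^k M \cong \prod_{i=1}^n M/\mathfrak{m}_i^k M \]
for each $k \geq 1$. The module-theoretic version is proved in exactly the same way as the ring-theoretic one: injectivity reduces to the identity $\bigcap_i \mathfrak{m}_i^k M = \mathfrak{m}^k M$, which follows from comaximality by the familiar ``partition of unity'' argument; and surjectivity uses the lifts obtained from a decomposition $1 = e_1 + \cdots + e_n$ with $e_i \equiv \delta_{ij} \pmod{\mathfrak{m}_j^k}$. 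These isomorphisms are natural in $M$ and compatible with the transition maps of the two inverse systems as $k$ varies.

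Finally I would pass to the inverse limit over $k$. Since inverse limits commute with finite products, the displayed isomorphisms combine to yield $\hat{A}_{\mathfrak{m}} \cong \prod_{i=1}^n \hat{A}_{\mathfrak{m}_i}$ and $\hat{M}_{\mathfrak{m}} \cong \prod_{i=1}^n \hat{M}_{\mathfrak{m}_i}$, with naturality in $M$ inherited from the finite-stage isomorphisms. I do not anticipate any serious obstacle: the Noetherian hypothesis does not play a direct role in the argument above (it is presumably included to ensure that completions behave well for the intended applications to finitely generated modules), and the Artinian hypothesis on $A/\mathfrak{m}$ is in fact automatic, since $A/\mathfrak{m} \cong \prod_{i=1}^n A/\mathfrak{m}_i$ is a finite product of fields.
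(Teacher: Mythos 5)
Your proof is correct, and it takes a different route from the paper's. The paper's argument first proves the ring-level decomposition $A/\mathfrak{m}^j \cong \prod_i A/\mathfrak{m}_i^j$ by observing that $A/\mathfrak{m}^j$ is a nilpotent extension of the Artinian ring $A/\mathfrak{m}$, hence itself Artinian, and then invoking the structure theorem that an Artinian ring splits as the product of its localizations at its maximal ideals. It then passes to the inverse limit to get $\hat{A}_{\mathfrak{m}} \cong \prod_i \hat{A}_{\mathfrak{m}_i}$, and finally transfers this to $M$ by tensoring, using the isomorphism $\hat{M}_I \cong \hat{A}_I \otimes_A M$ for finitely generated modules over a Noetherian ring. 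Your argument instead applies the Chinese Remainder Theorem directly at each finite level, for both $A$ and $M$, and then passes to the limit; this sidesteps the Artinian structure theorem entirely and never needs the tensor-product identification of completions. The payoff is that your proof is genuinely more general: as you observe, neither the Noetherian hypothesis on $A$ nor finite generation of $M$ plays any role, and the Artinian hypothesis on $A/\mathfrak{m}$ is automatic since $A/\mathfrak{m}$ is a finite product of fields by CRT. The paper's approach, by contrast, is shorter to state if one already has the Artinian structure theorem and Proposition~10.13 of Atiyah--Macdonald at hand, and those hypotheses are in any case satisfied in the paper's application. One small remark: the sentence about the $\mathfrak{m}$-adic topology coinciding with the join of the $\mathfrak{m}_i$-adic topologies is correct but not needed for the argument; the identity $\mathfrak{m}^k = \bigcap_i \mathfrak{m}_i^k$ together with the level-wise CRT isomorphisms is all you use.
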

\begin{proof}
Since $M$ is finitely generated and $A$ is Noetherian, we have an isomorphism
\begin{equation}\label{smashing iso 2} \hat{M}_{I} \cong \hat{A}_I \otimes_A M\end{equation}
for any choice of ideal $I$ in $A$,
and isomorphism~\ref{smashing iso 2} is natural in the choice of finitely generated $A$-module $M$. 
(This is classical; see e.g. Proposition~10.13 of~\cite{MR0242802}, or consult any number of other algebra textbooks.)

Now, for any positive integer $i$, the kernel of the $A$-algebra homomorphism
\[ A/\mathfrak{m}^j \rightarrow A/\mathfrak{m} \]
is $\mathfrak{m}/\mathfrak{m}^j$, a nilpotent ideal in $A/\mathfrak{m}_j$;
hence $A/\mathfrak{m}^j$ is a nilpotent extension of an Artinian ring for all $j\in\mathbb{N}$.
Hence $A/\mathfrak{m}^j$ is Artinian for all $j\in\mathbb{N}$.

Now every Artinian ring splits as the Cartesian product of its localizations at its maximal ideals (this is Theorem~8.7 in \cite{MR0242802}, for example),
so we have an isomorphism of rings $A/\mathfrak{m}^j \cong \prod_{i=1}^n A/\mathfrak{m}_i^j$,
and on taking limits, isomorphisms of rings
\begin{align*} 
 \hat{A}_{\mathfrak{m}} 
  & \cong  \lim_{j\rightarrow\infty} A/\mathfrak{m}^j \\
  & \cong  \lim_{j\rightarrow\infty} \left( \prod_{i=1}^n A/\mathfrak{m}^j_i\right) \\
  & \cong  \prod_{i=1}^n\lim_{j\rightarrow\infty} A/\mathfrak{m}^j_i \\
  & \cong  \prod_{i=1}^n \hat{A}_{\mathfrak{m}_i},\end{align*}
hence isomorphisms of $A$-modules
\begin{align*}
 \hat{M}_{\mathfrak{m}} 
  & \cong  \hat{A}_{\mathfrak{m}}\otimes_A M \\
  & \cong  \left( \prod_{i=1}^n\hat{A}_{\mathfrak{m}_i}\right)\otimes_A M \\
  & \cong  \left( \coprod_{i=1}^n\hat{A}_{\mathfrak{m}_i}\right)\otimes_A M \\
  & \cong  \coprod_{i=1}^n\left( \hat{A}_{\mathfrak{m}_i}\otimes_A M\right) \\
  & \cong  \coprod_{i=1}^n\left( \hat{M}_{\mathfrak{m}_i}\right) \\
  & \cong  \prod_{i=1}^n\left( \hat{M}_{\mathfrak{m}_i}\right), \end{align*}
all natural in $M$.
(The switch from $\times$ to $\oplus$ is because coproducts commute with the tensor product of modules;
and the product in question happens to be a finite product of modules, hence it is isomorphic to a finite coproduct of modules.)
\end{proof}

\begin{lemma}\label{completion commutes with V and with L}
Let $A$ be a discrete valuation ring with maximal ideal $\mathfrak{m}$ and with finite residue field. Let $g,h$ be the natural maps of commutative graded rings
\[ (L^A)\otimes_A \hat{A}_{\mathfrak{m}} \stackrel{h}{\longrightarrow} (L^A)^{\hat{}}_{\mathfrak{m}} \stackrel{g}{\longrightarrow} L^{\hat{A}_{\mathfrak{m}}}, \]
where $h$ is the usual completion comparison map (as in Proposition~10.13 of~\cite{MR0242802}), and $g$ is the map classifying the
underlying formal $A$-module of the universal formal $\hat{A}_{\mathfrak{m}}$-module.
Let $g^{\prime},h^{\prime}$ be the natural maps of commutative graded rings
\[ (V^A)\otimes_A \hat{A}_{\mathfrak{m}} \stackrel{h^{\prime}}{\longrightarrow} (V^A)^{\hat{}}_{\mathfrak{m}} \stackrel{g^{\prime}}{\longrightarrow} V^{\hat{A}_{\mathfrak{m}}}, \]
where $h^{\prime}$ is the usual completion comparison map, and $g^{\prime}$ is the map classifying the
underlying $A$-typical formal $A$-module of the universal $\hat{A}_{\mathfrak{m}}$-typical formal $\hat{A}_{\mathfrak{m}}$-module.

Then $g,h,g^{\prime}$, and $h^{\prime}$ are all isomorphisms.
\end{lemma}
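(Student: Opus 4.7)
The plan is to exploit two algebraic facts uniformly: that $V^A$ and $L^A$ are finitely generated free $A$-modules in each grading degree (so that $\mathfrak{m}$-adic completion, taken degree-by-degree in the graded setting, reduces to base change along the flat map $A \to \hat{A}_{\mathfrak{m}}$), and that the classifying maps $g,g'$ are determined by their action on logarithm coefficients (so that the base change $A \to \hat{A}_{\mathfrak{m}}$, which preserves both a uniformizer and the residue field, must identify Hazewinkel-style generators in the evident way).

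First I would dispose of $h$ and $h'$. Hazewinkel's theorem gives $V^A \cong A[v_1^A, v_2^A, \dots]$, a graded polynomial algebra with every generator in positive degree, so each homogeneous component $V^A_n$ is a finitely generated free $A$-module (the free module on the finitely many monomials of total degree $n$). Proposition~10.13 of~\cite{MR0242802} then gives $(V^A_n)^{\hat{}}_{\mathfrak{m}} \cong V^A_n \otimes_A \hat{A}_{\mathfrak{m}}$; assembling over all $n$ (where completion in the graded category is taken degree-by-degree) shows $h'$ is an isomorphism. The Cartier typification decomposition $L^A \cong \coprod_{0 \leq n,\, n \neq q^k - 1} \Sigma^{2n} V^A$ as graded $V^A$-modules (cited as 21.7.17 of~\cite{MR506881} and used explicitly in the theorem just below this lemma) then makes each component $L^A_n$ a finitely generated free $A$-module as well, and the identical argument gives that $h$ is an isomorphism.

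Next I would handle $g$ and $g'$. By Remark~\ref{remark on log coefficients}, such classifying maps are natural in the logarithm; since $A \to \hat{A}_{\mathfrak{m}}$ sends a uniformizer of $A$ to a uniformizer of $\hat{A}_{\mathfrak{m}}$ and induces the identity on residue fields (of cardinality $q$), the Hazewinkel recursion~\ref{hazewinkel formula} for $V^A$ and for $V^{\hat{A}_{\mathfrak{m}}}$ is formally identical, with the same $\pi$ and the same $q$. A straightforward induction on $i$, entirely parallel to the one in Proposition~\ref{computationofunramifiedgamma} with residue-field degree $f = 1$, then forces $g'(v_i^A) = v_i^{\hat{A}_{\mathfrak{m}}}$ for all $i$. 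Hence $g' \circ h'$ is the evident base-change isomorphism of polynomial rings $\hat{A}_{\mathfrak{m}}[v_1^A, v_2^A, \dots] \to \hat{A}_{\mathfrak{m}}[v_1^{\hat{A}_{\mathfrak{m}}}, v_2^{\hat{A}_{\mathfrak{m}}}, \dots]$, and since $h'$ is already an isomorphism, so is $g'$. The $L$-case analogue proceeds identically, with Cartier typification used to propagate the identification of generators from $V^A$ back to $L^A$ so that $g \circ h$ is an isomorphism, whence so is $g$.

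The main obstacle is foundational rather than conceptual: one must fix the convention that $\mathfrak{m}$-adic completion of a graded $A$-module is performed degree-by-degree (so that the symbols $(V^A)^{\hat{}}_{\mathfrak{m}}$ and $(L^A)^{\hat{}}_{\mathfrak{m}}$ really admit the flat-base-change description I am using), and one must explicitly record the degree-by-degree finitely generated free $A$-module structure of $L^A$, for which Cartier typification does the work. Once those foundational points are pinned down, every other step is formal bookkeeping driven by Hazewinkel's recursion.
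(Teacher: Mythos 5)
Your proof is correct as a sketch, but it takes a genuinely different route from the paper's. The paper attacks the $L$-side first: it invokes Drinfeld's explicit generators-and-relations presentation of the indecomposables $L^A_{n-1}/D^A_{n-1}$ (generators $c_a$ for $a\in A$ and an element $d$, subject to three families of relations), then uses Hazewinkel's Proposition~21.3.1 that this module is free of rank one over a DVR with finite residue field, and carries out a delicate case analysis on whether the ideal $I^A$ generated by $\{a-a^n : a\in A\}$ is all of $A$ or not, in order to show that the map $L^A_{n-1}/D^A_{n-1} \to L^{\hat A_{\mathfrak m}}_{n-1}/D^{\hat A_{\mathfrak m}}_{n-1}$ is literally $\mathfrak m$-adic completion of a rank-one free module; finally it appeals to the fact that $L^A$ is the symmetric $A$-algebra on the direct sum of these indecomposables to conclude. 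The $V^A$ statement is then deduced at the very end via the compatibility of the Cartier splitting with the comparison maps. You invert this order: you prove the $V$-side first using Hazewinkel's polynomiality $V^A\cong A[v_1^A,v_2^A,\dots]$ and the logarithm recursion (which pins down $\gamma(v_i^A)=v_i^{\hat A_{\mathfrak m}}$ by the same induction that proves Proposition~\ref{computationofunramifiedgamma} in its $f=1$ case), and then deduce the $L$-side from the Cartier splitting. Both arguments ultimately rest on the same Hazewinkel-Cartier compatibility to transfer between $V^A$ and $L^A$; you merely cross that bridge in the opposite direction.

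What your approach buys: it avoids the paper's bifurcation into the $I^A=A$ and $I^A\neq A$ cases (and the contrapositive argument showing $I^A\neq A\Rightarrow I^{\hat A_{\mathfrak m}}\neq \hat A_{\mathfrak m}$), since the Hazewinkel recursion uniformly forces $\gamma(v_i^A)=v_i^{\hat A_{\mathfrak m}}$ without reference to Drinfeld's $c_a$. What the paper's approach buys: it works with $L^A$ intrinsically and does not presuppose Hazewinkel's polynomiality theorem for $V^A$; instead it derives the structure of $L^A$ from the more primitive Drinfeld presentation of the indecomposables. One small caveat for your version: Remark~\ref{remark on log coefficients} states the ``natural in the logarithm'' property of classifying maps only for $K\subseteq L$ among $p$-adic number fields; applying it to the completion map $A\to\hat A_{\mathfrak m}$ of a not-necessarily-complete DVR is a mild extension of scope that you should acknowledge (it holds for the same functional-equation-lemma reasons, and the paper itself effectively relies on the analogous compatibility at the end of its proof). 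Likewise, your observation that ``completion must mean degree-by-degree completion'' so that the $h,h'$ comparison maps are isomorphisms is a foundational point that the paper glosses over but which you are right to flag.
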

\begin{proof}
For any commutative ring $A$ and any integer $n>2$, Drinfeld proved in~\cite{MR0384707} that the module of indecomposables $L^A_{n-1}/D^A_{n-1}$ of $L^A$ in grading degree $2(n-1)$ is the $A$-module with one generator $c_a$ for each $a\in A$ and one additional generator $d$, and subject to the relations
\begin{align*} d(a-a^n) &= \nu(n)c_a \mbox{\ \ for\ all\ } a\in A \\
 c_{a+b}-c_a-c_b &= d\frac{a^n + b^n - (a+b)^n}{\nu(n)} \mbox{\ \ for\ all\ } a,b\in A \\
 ac_b + b^nc_a &= c_{ab} \mbox{\ \ for\ all\ } a,b\in A .\end{align*}
Here $\nu(n)$ is defined to be $p$ if $n$ is a power of a prime number $p$, and $\nu(n) = 1$ if $n$ is not a prime power.
Hazewinkel then proved (this is Proposition~21.3.1 of~\cite{MR506881}) that, if $A$ is a discrete valuation ring with finite residue field, then
$L^A_{n-1}/D^A_{n-1}$ is isomorphic to a free $A$-module on one generator. Clearly this is true when $\nu(n)= 1$, since then $L^A_{n-1}/D^A_{n-1}$ is generated by $d$. Suppose instead that $\nu(n)\neq 1$.
Let $I^A$ denote the ideal in $A$ generated by all elements of the form 
$a-a^n, a\in A$.
Then, as Hazewinkel observes, there are two possibilities:
\begin{description}
\item[If $I^A = A$] Then there exists an $a\in A$ such that $a-a^n$ is a unit in $A$,
and consequently $c_a$ generates $L^A_{n-1}/D^A_{n-1}$.
\item[If $I^A \neq A$] Then, writing $\pi$ for a generator of $\mathfrak{m}$, the element $c_{\pi}$ generates $L^A_{n-1}/D^A_{n-1}$. Hazewinkel observes that this case only occurs when $A/\mathfrak{m}$ is finite and $n$ is a power of the characteristic of $A/\mathfrak{m}$.
\end{description}

Since the completion map $f: A\rightarrow \hat{A}_{\mathfrak{m}}$ is injective,
if $I^A = A$, then there exists some finite collection of elements
$a_1,b_1, a_2,b_2, \dots ,a_m,b_m\in A$ such that
$\sum_{i=1}^m b_m(a_m - a_m^n)  = 1$,
and consequently 
$\sum_{i=1}^m f(b_m)(f(a_m) - f(a_m)^n)  = 1$ in $\hat{A}_{\mathfrak{m}}$.
So $I^{\hat{A}_{\mathfrak{m}}} = \hat{A}_{\mathfrak{m}}$.
Choose an element $a\in A$ such that $a-a^n\in A^{\times}$,
and now the
map 
\[ A\{ c_a\} \stackrel{\cong}{\longrightarrow} L^A_{n-1}/D^A_{n-1} 
 \rightarrow L^{\hat{A}_{\mathfrak{m}}}_{n-1}/D^{\hat{A}_{\mathfrak{m}}}_{n-1} 
 \stackrel{\cong}{\longrightarrow} \hat{A}_{\mathfrak{m}}\{ c_{f(a)}\} \]
is simply the $\mathfrak{m}$-adic completion map.

On the other hand, if $I^A \neq A$, I claim that $I^{\hat{A}_{\mathfrak{m}}} \neq \hat{A}_{\mathfrak{m}}$ as well. We prove this by contrapositive: suppose that
$I^A \neq A$ but $I^{\hat{A}_{\mathfrak{m}}} = \hat{A}_{\mathfrak{m}}$.
Then there exists an element $a\in \hat{A}_{\mathfrak{m}}$ such that
$a-a^n \in \hat{A}_{\mathfrak{m}}^{\times}$, and consequently
the residue $\overline{a}\in \hat{A}_{\mathfrak{m}}/\mathfrak{m}\hat{A}_{\mathfrak{m}}$
also has the property that $\overline{a} - \overline{a}^n$ is a unit, and in particular,
$\overline{a}$ is nonzero. 
Since 
\begin{equation}\label{residue field isomorphism} \hat{A}_{\mathfrak{m}}/\mathfrak{m}\hat{A}_{\mathfrak{m}} \cong A/\mathfrak{m},\end{equation} we can choose an element $\tilde{a}\in A$ such that the residue of $\tilde{a}$ in $A/\mathfrak{m}$ is (under the isomorphism~\ref{residue field isomorphism}) 
$\overline{a}$. Now $\tilde{a}$ is an element of $A$ such that
$\tilde{a} - \tilde{a}^n$ is nonzero modulo $\mathfrak{m}$,
hence $I^A = A$ since every ideal in the discrete valuation ring $A$ is a power of $\mathfrak{m}$, so the only ideal containing an element
which is nonzero modulo $\mathfrak{m}$ is $A$ itself and so $\tilde{a} - \tilde{a}^n$ is a unit in $A$.
But this contradicts our assumption that $I^A \neq A$.

So if $I^A\neq A$, then $I^{\hat{A}_{\mathfrak{m}}} \neq \hat{A}_{\mathfrak{m}}$.
Writing $\pi$ for a generator of the maximal ideal $\mathfrak{m}$ of $A$,
we now have that the composite
\[ A\{ c_{\pi}\} \stackrel{\cong}{\longrightarrow} L^A_{n-1}/D^A_{n-1} 
 \rightarrow L^{\hat{A}_{\mathfrak{m}}}_{n-1}/D^{\hat{A}_{\mathfrak{m}}}_{n-1} 
 \stackrel{\cong}{\longrightarrow} \hat{A}_{\mathfrak{m}}\{ c_{\pi}\} \]
is simply the $\mathfrak{m}$-adic completion map.

In either case, the map $L^A_{n-1}/D^A_{n-1}  \rightarrow L^{\hat{A}_{\mathfrak{m}}}_{n-1}/D^{\hat{A}_{\mathfrak{m}}}_{n-1}$
is simply (up to isomorphism) $\mathfrak{m}$-adic completion.
Hazewinkel shows that, for any discrete valuation ring $A$,
$L^A$ is simply the symmetric $A$-algebra on 
the direct sum $\coprod_{n\geq 2} L^A_{n-1}/D^A_{n-1}$;
and now,
\begin{itemize}
\item since $A$ and $\hat{A}_{\mathfrak{m}}$ are both discrete valuation rings,
\item since the map $\coprod_{n\geq 2} L^A_{n-1}/D^A_{n-1}
\rightarrow \coprod_{n\geq 2} L^{\hat{A}_{\mathfrak{m}}}_{n-1}/D^{\hat{A}_{\mathfrak{m}}}_{n-1}$
is simply $\mathfrak{m}$-adic completion,
\item since $A$ is Noetherian and $\coprod_{n\geq 2} L^A_{n-1}/D^A_{n-1}$ is a finitely-generated $A$-module in each grading degree, and consequently
$\mathfrak{m}$-adic completion coincides with tensoring over $A$ with $\hat{A}_{\mathfrak{m}}$, 
\item and since taking symmetric algebras commutes with
tensoring over $A$ with $\hat{A}_{\mathfrak{m}}$, 
\end{itemize}
we conclude that the map $L^A \rightarrow L^{\hat{A}_{\mathfrak{m}}}$
is simply, up to isomorphism, $\mathfrak{m}$-adic completion, and 
it also coincides with the base-change map from $A$ to $\hat{A}_{\mathfrak{m}}$. The maps $g,h$ in the statement of the lemma are hence isomorphisms.

Since the map of discrete valuation rings $A\rightarrow \hat{A}_{\mathfrak{m}}$ sends a uniformizer to a uniformizer and 
induces an isomorphism on residue fields, the splitting of $L^A$ as an infinite direct sum of 
copies of $V^A$ is compatible with the maps from $L^A$ to $L^{\hat{A}_{\mathfrak{m}}}$ and from $V^A$ to $V^{\hat{A}_{\mathfrak{m}}}$; see 
section~21.5 of~\cite{MR506881}.
\end{proof}

We also need to use an easy result from the paper~\cite{cmah1}:
\begin{prop}\label{LA is torsion-free}
Let $A$ be the ring of integers in a finite field extension $K/\mathbb{Q}$. Then, for each integer $n$, the grading degree $n$ summand $L^A_n$ in the graded ring $L^A$ is a 
finitely generated $A$-module.
\end{prop}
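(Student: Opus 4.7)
The plan is to exhibit $L^A$ as a commutative $A$-algebra generated by a countable set of homogeneous elements, only finitely many of which occur in each grading degree, and then to bound the number of monomials of each total degree.

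By the universal property, $L^A$ is generated as a commutative $A$-algebra by the coefficients $a_{ij}$ of the underlying universal formal group law $F(X,Y) = X + Y + \sum_{i+j \geq 2} a_{ij}X^iY^j$ together with the coefficients $r_{a,i}$ of the power series $\rho(a)(X) = aX + \sum_{i \geq 2} r_{a,i}X^i$ for each $a \in A$. All of these generators lie in positive even grading degrees, and for each fixed degree $d$ only finitely many $a_{ij}$ lie in degree $d$, while for each fixed $a \in A$ at most one of the $r_{a,i}$ lies in degree $d$.

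Next, I would use that $A$, being the ring of integers in a number field, is a finitely generated $\mathbb{Z}$-module; fix $\mathbb{Z}$-module generators $a_1, \dots, a_k$ of $A$. The additivity axiom $\rho(a+b)(X) = F(\rho(a)(X),\rho(b)(X))$, combined with the fact that $\rho(-1)(X)$ is the formal inverse $[-1]_F(X)$, a power series whose coefficients already lie in the $A$-subalgebra generated by the $a_{ij}$'s, lets me express $\rho(a)$ for every $a \in A$ as an iterated $F$-sum (and, where needed, $F$-inverse) of $\rho(a_1),\dots,\rho(a_k)$. Hence each $r_{a,i}$ lies in the $A$-subalgebra generated by $\{a_{ij}\} \cup \{r_{a_l,j} : 1 \leq l \leq k,\, j \leq i\}$. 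Consequently $L^A$ is generated as a commutative $A$-algebra by the countable set $S = \{a_{ij}\} \cup \{r_{a_l,i} : 1 \leq l \leq k,\, i \geq 2\}$, which contains only finitely many elements in each grading degree.

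Finally, $L^A_n$ is spanned as an $A$-module by monomials in the generators $S$ of total degree $n$. Since each generator in $S$ has degree at least $2$, such a monomial has at most $n/2$ factors, each drawn from the finite set of generators of degree at most $n$. There are therefore only finitely many such monomials, and $L^A_n$ is a finitely generated $A$-module (in odd degrees it is zero). The main thing to check carefully is the reduction step in the previous paragraph — extending the generator statement from $\mathbb{Z}$-linear combinations of the $a_l$ to arbitrary $a \in A$ — but this follows from the ring-map axiom on $\rho$ applied to integer linear combinations together with the standard existence of $[-1]_F$ over the base ring of any formal group law.
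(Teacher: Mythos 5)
Your argument is correct and follows essentially the same line as the paper's one-sentence sketch (that a formal $A$-module bud is determined by finitely many coefficients), but you supply the detail the paper leaves implicit: why, when $A$ is infinite, only finitely many of the $\rho$-coefficients are needed per degree. The key reduction---using that $A$ is a finitely generated $\mathbb{Z}$-module and that $\rho$ is additive for the formal group structure, so each $\rho(a)$ is an iterated $F$-sum and $F$-inverse of the $\rho(a_\ell)$---is exactly what makes the paper's ``finitely many coefficients'' assertion true, and you have filled it in correctly.
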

\begin{proof}
See~\cite{cmah1}. (But, for the sake of self-containedness: one can easily prove this proposition from first principles by simply remembering that, to specify a formal $A$-module $n+1$-bud $(F(X,Y),\rho(X))$, one only needs to specify finitely many coefficients in the truncated power series $F(X,Y)$ and $\rho(X)$.)
\end{proof}

\begin{theorem}{\bf (Nonexistence of Ravenel's algebraic spheres, global case.)}\label{nonexistence of alg spheres global case}
Let $K$ be a finite extension of $\mathbb{Q}$ of degree $>1$, with ring of integers $A$. 
Suppose that there exists a prime number $p$ which neither ramifies nor splits completely in $A$, and 
such that each prime $\mathfrak{m}$ of $A$ over $p$ is of the same degree. (By Lemma~\ref{chebotarev density lemma}, this condition is satisfied if $K/\mathbb{Q}$ is cyclic of prime power order, and it is also satisfied if $K/\mathbb{Q}$ is Galois.)

Then there does not exist a spectrum $X$ such that $MU_*(X) \cong L^A$.
\end{theorem}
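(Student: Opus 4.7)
The plan is to reduce to the local nonexistence theorem via $p$-adic completion. Fix a prime $p$ as in the hypothesis (provided by Lemma~\ref{chebotarev density lemma}): $p$ neither ramifies nor splits completely in $A$, and the primes $\mathfrak m_1,\dots,\mathfrak m_r$ of $A$ above $p$ all have the same residue degree, so each completion $\hat A_{\mathfrak m_i}$ is the ring of integers of a local field $\hat K_{\mathfrak m_i}/\mathbb{Q}_p$ of degree $d>1$. Suppose for contradiction that a spectrum $X$ with $MU_*(X)\cong L^A$ exists.

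Since each $L^A_n$ is a finitely generated $A$-module by Proposition~\ref{LA is torsion-free} and $A$ is Noetherian, $L^A$ is degreewise finitely generated over $\mathbb{Z}$, so the $p$-adic completion $\hat X_p$ should satisfy $MU_*(\hat X_p)\cong\widehat{L^A}_{(p)}$. Because $p$ is unramified one has $(p)=\mathfrak m_1\cdots\mathfrak m_r$, so Lemma~\ref{completion lemma} applied to the $A$-module $L^A$ together with Lemma~\ref{completion commutes with V and with L} on each factor yields
\[ MU_*(\hat X_p)\;\cong\;\widehat{L^A}_{(p)}\;\cong\;\prod_{i=1}^r\widehat{L^A}_{\mathfrak m_i}\;\cong\;\prod_{i=1}^r L^{\hat A_{\mathfrak m_i}}.\]
The orthogonal idempotents $e_1,\dots,e_r$ of the factorization $\hat A_p\cong\prod_i\hat A_{\mathfrak m_i}$ then live as degree-zero classes in $MU_*(\hat X_p)$.

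I would next use these idempotents to split off from $\hat X_p$ a summand $Z$ with $MU_*(Z)\cong L^{\hat A_{\mathfrak m_1}}$; because $[\hat K_{\mathfrak m_1}:\mathbb{Q}_p]=d>1$, the local nonexistence theorem applied to $Z$ would produce the contradiction. So in outline the argument is: complete at $p$, use the completion lemmas to split algebraically, split topologically using the resulting idempotents, and invoke the local theorem on a single factor.

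The hard part will be this last splitting step: promoting the algebraic product decomposition of $MU_*(\hat X_p)$ to a decomposition of spectra. Each $e_i$ is a priori only a degree-zero class in $MU_*(\hat X_p)=\pi_0(MU\wedge\hat X_p)$, and so furnishes an idempotent self-map of the $MU$-module spectrum $MU\wedge\hat X_p$ rather than of $\hat X_p$ directly. The cleanest way around this is to observe that the local theorem's proof (Cartier typification followed by the $V^A$-module dissonance results of Sections~\ref{non-tot-ram section} and~\ref{tot ram case}) uses only the $BP$-module structure on $BP$-homology, so it strengthens to an assertion about $MU$-module spectra $M$ with $\pi_*M\cong L^{\hat A_{\mathfrak m_1}}$; applying this strengthened form to the $e_1$-summand of $MU\wedge\hat X_p$ then completes the proof.
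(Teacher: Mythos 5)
Your overall strategy --- reduce to the local case via $p$-adic completion and the completion lemmas --- is the same as the paper's, and the algebraic steps (Künneth / completion lemmas giving $\widehat{L^A}_p \cong \prod_i L^{\hat A_{\mathfrak m_i}}$, then Cartier typification) match. But the crucial last step of your plan, and hence your proof, does not go through as written.

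You correctly flag the difficulty that the algebraic product decomposition of $MU_*(\hat X_p)$ does not automatically give a splitting of the spectrum $\hat X_p$: the idempotents live in $\pi_0(MU\smash \hat X_p)$, not in $\pi_0 \End(\hat X_p)$. Your proposed fix --- strengthen the local theorem to a statement about $MU$-module spectra $M$ with $\pi_*M \cong L^{\hat A_{\mathfrak m_1}}$ --- is where the gap is. The machinery of Sections~\ref{non-tot-ram section} and \ref{tot ram case} (Lemma~\ref{acyclicity lemma}, Proposition~\ref{dissonance lemma}, Theorem~\ref{unramified dissonance thm}, Lemma~\ref{localization and eventual division}) is genuinely about \emph{spectra} and their Bousfield localizations $L_{E(n)}X$: the inputs are Ravenel's localization and smashing theorems and Morava/Miller--Ravenel change-of-rings, all applied to $BP_*(X)$ for a spectrum $X$. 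These arguments do not simply carry over to an arbitrary $MU$-module spectrum whose homotopy is a $V^A$-module; the idempotent summand $e_1\cdot(MU\smash\hat X_p)$ has no reason to be of the form $MU\smash Z$ for any spectrum $Z$, and without that you cannot talk about $L_{E(n)}Z$ or invoke dissonance. So the ``strengthened local theorem'' would require substantial new work, and it is not obviously true.

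The paper sidesteps this problem entirely, and the hypothesis on $p$ is exactly what makes it possible. Because $p$ is unramified and every prime $\mathfrak p_i$ of $A$ over $p$ has the \emph{same} residue degree $f_p > 1$ (and there is up to isomorphism only one unramified extension of $\mathbb{Q}_p$ of a given degree), all the local completions $\hat A_{\mathfrak p_i}$ are isomorphic to a single ring $\hat A_{\mathfrak p}$. Hence
\[ \prod_{i=1}^m L^{\hat A_{\mathfrak p_i}} \cong \coprod_{0\leq a\neq q^j-1}\Sigma^{2a}\bigl(V^{\hat A_{\mathfrak p}}\bigr)^{\times m} \]
is a module over the \emph{single} ring $V^{\hat A_{\mathfrak p}}$, and this graded module is the $BP$-homology of the honest spectrum $\bigl(\coprod_{0\leq n\neq p^k-1}\Sigma^{2n}X\bigr)\smash\holim_b S/p^b$. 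Theorem~\ref{unramified dissonance thm} therefore applies directly to this spectrum (since $f_p>1$ so the extension is non-totally-ramified), forcing its $BP$-homology to be $v_n$-power-torsion for all $n$, which contradicts its being a free $V^{\hat A_{\mathfrak p}}$-module. No splitting of spectra is needed. You should replace your idempotent step with this observation; as your proof stands, the last step is a genuine gap.
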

\begin{proof}
This is a proof by contrapositive: suppose that a spectrum $X$ exists so that $MU_*(X)\cong L^A$, and 
choose a prime number $p$ such that $p$ neither ramifies nor completely splits in $A$ and such that every prime of $A$ over $p$ is of the same degree.
It is classical that the homotopy limit $\holim_n S/p^n$ of the mod $p^n$ Moore spectra has its homotopy groups $\pi_*(\holim_n S/p^n)$
isomorphic, as a $\pi_*(S)$-module, to the $p$-adic completion
$(\pi_*(S))^{\hat{}}_p$ of $\pi_*(S)$.
Since $\pi_*(S)$ is finitely generated in each grading degree,
$(\pi_*(S))^{\hat{}}_p \cong \pi_*(S) \otimes_{\mathbb{Z}} \hat{\mathbb{Z}}_p$,
and since $\hat{\mathbb{Z}}_p$ is flat as a $\mathbb{Z}$-module,
the ring $(\pi_*(S))^{\hat{}}_p$ is flat as an $\pi_*(S)$-module.
Hence the $E^2$-term
\[ \Tor^{\pi_*(S)}_{*,*}\left(MU_*(X), \pi_*(\holim_n S/p^n)\right) 
  \]
of the 
K\"{u}nneth spectral sequence 
computing $\pi_*\left( MU \smash X \smash \holim_n S/p^n \right)$
collapses on to the $\Tor^{\pi_*(S)}_0$-line, hence
\begin{align}
\nonumber  MU_*\left( X \smash \holim_n S/p^n \right) &\cong
 MU_*(X) \otimes_{\pi_*(S)} \pi_*(S)^{\hat{}}_p  \\
 \nonumber &\cong  L^A\otimes_{\mathbb{Z}} \hat{\mathbb{Z}}_p \\
 \label{iso 5607} &\cong (L^A)^{\hat{}}_p ,\end{align}
with isomorphism~\ref{iso 5607} due to $L^A$ being finitely generated
in each grading degree by Proposition~\ref{LA is torsion-free}.
(Recall that, for a finitely generated module $M$ over a Noetherian ring
$R$, if $I$ is an ideal in $R$ then the natural map
$M\otimes_A \hat{A}_I \rightarrow \hat{M}_I$
is an isomorphism; see e.g. Proposition~10.13 of~\cite{MR0242802}.)

Now write $\mathfrak{p}_1, \mathfrak{p}_2, \dots ,\mathfrak{p}_m$ for the set of primes of $A$ over $p$.
By Lemma~\ref{completion lemma} and by the finite generation result in Proposition~\ref{LA is torsion-free}, $(L^A)^{\hat{}}_{p} \cong \prod_{i=1}^n (L^A)^{\hat{}}_{\mathfrak{p}_i}$.

Now, for each $i$,
let $q_i$ denote the cardinality of the residue
field of $\hat{A}_{\mathfrak{p}_i}$.
We have an isomorphism of graded $V^{\hat{A}_{\mathfrak{p}_i}}$-modules
\[ L^{\hat{A}_{\mathfrak{p}_i}} \cong \coprod_{0\leq n\neq q_i^k-1}  \Sigma^{2n} V^{\hat{A}_{\mathfrak{p}_i}}\]
given by Cartier typification (see 21.7.17 of~\cite{MR506881} for this).

Finally, putting all these isomorphisms together, we have a string of isomorphisms:
\begin{align}
\nonumber  MU_*\left( X \smash \holim_b S/p^b \right) &\cong
 (L^A)^{\hat{}}_p  \\
\nonumber  & \cong  \prod_{i=1}^m (L^{A})^{\hat{}}_{\mathfrak{p}_i} \\
\label{iso 7106}  & \cong  \prod_{i=1}^m L^{\hat{A}_{\mathfrak{p}_i}} \\
\label{iso 7107}  & \cong  \prod_{i=1}^m \left( \coprod_{0\leq a\neq q_i^j-1} \Sigma^{2a} V^{\hat{A}_{\mathfrak{p}_i}}\right) ,\end{align}
with isomorphism~\ref{iso 7106} due to 
Lemma~\ref{completion commutes with V and with L}.

Now by assumption, 
each of the primes $\mathfrak{p}_i$ in $A$ has the same residue degree;
and since $p$ was assumed to not ramify, this means that the extension of local fields
$K(\hat{A}_{\mathfrak{p}_i})/\mathbb{Q}_p$ is unramified of some residue degree $f_p$, and this residue degree $f_p$ does not depend on $i$. (I am writing $K(A)$ for the field of fractions of a ring $A$.)
But up to isomorphism, there is only one unramified extension of $\mathbb{Q}_p$ of a given residue degree $f_p$.
Consequently neither $K(\hat{A}_{\mathfrak{p}_i})$ nor its ring of integers $\hat{A}_{\mathfrak{p}_i}$ depends, up to isomorphism, on $i$.
So from isomorphism~\ref{iso 7107} we get the isomorphism
\begin{align}
\nonumber MU_*\left( X \smash \holim_b S/p^b \right)
 &\cong  \coprod_{0\leq a\neq q^j-1} \Sigma^{2a} \left(V^{\hat{A}_{\mathfrak{p}}}\right)^{\times n} \end{align}
where $\mathfrak{p}$ is any of the primes of $A$ over $p$,
and $q$ is the cardinality of the residue field of $\hat{A}_{\mathfrak{p}}$.

Now we have the usual splitting of $p$-local (hence also $p$-complete) complex bordism as a sum of suspensions of $BP$-homology, 
\begin{align*} 
 (MU_*(X))^{\hat{}}_p 
  & \cong  \coprod_{0\leq n\neq p^k-1} \Sigma^{2n} (BP_*(X))^{\hat{}}_p \\
  & \cong  BP_*\left(\left(\coprod_{0\leq n\neq p^k-1} \Sigma^{2n} X\right)\smash \holim_b S/p^b\right) ,\end{align*}
so now we have isomorphisms
\begin{align*}
 BP_*\left(\left(\coprod_{0\leq n\neq p^k-1} \Sigma^{2n} X\right)\smash \holim_b S/p^b\right) 
  & \cong  (MU_*(X))^{\hat{}}_p \\
  & \cong  (L^A)_p^{\hat{}} \\
  & \cong   \left(\coprod_{0\leq n\neq q^k - 1}\Sigma^{2n} V^{\hat{A}_{\mathfrak{p}}}\right)^{\times m} ,\end{align*}
and
since we assumed that $p$ does not completely split in $A$, the residue degree $f_p$ is greater than one.
So Theorem~\ref{unramified dissonance thm} applies:
$\left(\coprod_{0\leq n\neq q^k - 1}\Sigma^{2n} V^{\hat{A}_{\mathfrak{p}}}\right)^{\times f_p}$ must be a $v_n$-power-torsion $BP_*$-module for all 
$n\geq 0$, since
the $V^{\hat{A}_{\mathfrak{p}}}$-module $\left(\coprod_{0\leq n\neq q^k - 1}\Sigma^{2n} V^{\hat{A}_{\mathfrak{p}}}\right)^{\times f_p}$
is the $BP$-homology of a spectrum, namely, the spectrum
$\left(\coprod_{0\leq n\neq p^k-1} \Sigma^{2n} X\right)\smash \holim_b S/p^b$.
But 
$\left(\coprod_{0\leq n\neq q^k - 1}\Sigma^{2n} V^{\hat{A}_{\mathfrak{p}}}\right)^{\times f_p}$ is a free $V^{\hat{A}_{\mathfrak{p}}}$-module,
hence is not $p$-power-torsion, a contradiction. 

Hence there cannot exist $X$ such that $MU_*(X) \cong L^A$.
\end{proof}

\begin{corollary}\label{main conclusion cor}
Except in the trivial case where $A = \hat{\mathbb{Z}}_p$, Ravenel's local and $p$-typical algebraic extensions of the sphere spectrum do not exist. In the case of an extension which is cyclic of prime power order or a Galois extension, Ravenel's global algebraic extensions of the sphere spectrum do not exist, except in the trivial case where $A = \mathbb{Z}$.

That is, if $K/\mathbb{Q}$ is a finite extension of degree $>1$, which is either cyclic of prime power order or Galois, and which has ring of integers $A$, there does not exist a spectrum $S^A$
such that $MU_*(S^A) \cong L^A$ as graded $MU_*$-modules.
If $K/\mathbb{Q}_p$ is a finite extension of degree $>1$ with ring of integers $A$, then there does not exist a spectrum $S^A$
such that $MU_*(S^A) \cong L^A$ as graded $MU_*$-modules,
and there does not exist a spectrum $S^A$
such that $BP_*(S^A) \cong V^A$ as graded $BP_*$-modules.
\end{corollary}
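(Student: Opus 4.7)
The plan is straightforward: this corollary is really just a repackaging of three results proven earlier in the paper, so the ``proof'' is essentially a matter of citation and verifying that the hypotheses line up.

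First I would dispatch the global case. Given a finite extension $K/\mathbb{Q}$ of degree $>1$ which is either cyclic of prime power order or Galois, Lemma~\ref{chebotarev density lemma} produces a prime $p$ which neither ramifies nor splits completely in $A$ and such that every prime of $A$ over $p$ has the same residue degree. This is exactly the hypothesis on $(K,A,p)$ required by Theorem~\ref{nonexistence of alg spheres global case}, so the nonexistence of a spectrum $S^A$ with $MU_*(S^A) \cong L^A$ follows immediately.

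Next I would handle the $p$-adic local case: for $K/\mathbb{Q}_p$ of degree $>1$ with ring of integers $A$, the nonexistence of a spectrum $S^A$ with $MU_*(S^A)\cong L^A$ is exactly the statement of the local-case theorem (``Nonexistence of Ravenel's algebraic spheres, local case'') proven at the start of this section, whose proof already splits into the unramified and totally ramified subcases via Theorem~\ref{unramified dissonance thm} and Theorem~\ref{main thm on tot ram nonexistence}.

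Finally, for the $p$-typical local case, the nonexistence of a spectrum $S^A$ with $BP_*(S^A)\cong V^A$ is precisely Corollary~\ref{nonrealizability of VA}, which in turn combines the unramified nonrealizability statement (Corollary~\ref{unramified VA nonrealizability}, whose proof simply observes that $V^A$ is not $p$-power-torsion) with the totally ramified nonrealizability statement (a consequence of Theorem~\ref{torsion-free and local coh not fin pres}, since $V^A$ is a finitely presented $V^A$-module that is $p$-torsion-free). There is no genuine obstacle here, since all the work has already been done; the only thing to be careful about is the assumption in the global case that $K/\mathbb{Q}$ is cyclic of prime power order or Galois, which is exactly the hypothesis under which Lemma~\ref{chebotarev density lemma} supplies a prime of the kind demanded by Theorem~\ref{nonexistence of alg spheres global case}.
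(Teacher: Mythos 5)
Your proposal is correct and matches the paper's intent: the paper gives no proof of this corollary at all, precisely because it is a direct summary of the preceding results, namely Theorem~\ref{nonexistence of alg spheres global case} (with Lemma~\ref{chebotarev density lemma} supplying the hypothesis on primes) for the global case, the local-case nonexistence theorem for the local case, and Corollary~\ref{nonrealizability of VA} for the $p$-typical local case. Your citations and the verification that the hypotheses line up are exactly what is needed.
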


\section{Appendix on basics of local cohomology.}\label{local cohomology appendix}

In this short appendix I present some of the most basic definitions and
ideas of local cohomology.
The reader who wants something more substantial about
this subject, or to read proofs of the results cited in this appendix, can consult the textbook of Brodmann and Sharp,
\cite{MR3014449}.

First, recall that an ideal $I$ in a commutative graded ring $R$ 
is called {\em homogeneous} if there exists a generating set for $I$
consisting of homogeneous elements.
\begin{definition}
Let $R$ be a commutative graded ring, and let $I$ be a homogeneous ideal of $R$. Let $\grMod(R)$ denote the category of graded $R$-modules
and grading-preserving $R$-module homomorphisms.
We define two functors
\begin{align*} D_I, \Gamma_I: \grMod(R) & \rightarrow  \grMod(R)\end{align*}
as follows:
\begin{align*}
 \Gamma_I(M) & =  \coprod_{m\in\mathbb{Z}}\colim_{n\rightarrow \infty}\hom_{\grMod(R)}(\Sigma^m R/I^n, M) \\
 D_I(M) & =  \coprod_{m\in\mathbb{Z}}\colim_{n\rightarrow \infty}\hom_{\grMod(R)}(\Sigma^m I^n, M), \end{align*}
with natural transformations
\begin{equation}\label{seq of nat transformations} \Gamma_I \rightarrow \id \rightarrow D_I \end{equation}
induced by applying $\coprod_{m\in\mathbb{Z}}\colim_{n\rightarrow \infty}\hom_{\grMod(R)}(\Sigma^m -, M)$ to the short exact sequence of graded $R$-modules
\begin{equation*}\label{} 0 \rightarrow I^n \rightarrow R \rightarrow R/I^n \rightarrow 0.\end{equation*}

The right-derived functors $R^*\Gamma_I$ of the functor $\Gamma_I$ are called {\em local cohomology at $I$}, and written
\[ H^n_I(M) = (R^n(\Gamma_I))(M).\]
\end{definition}

\begin{prop}\label{basic properties of local cohomology 1}
Let $R$ be a commutative graded ring, and let $I$ be a homogeneous ideal of $R$. 
The natural transformations~\ref{seq of nat transformations}
induce an exact sequence of $R$-modules
\begin{equation}\label{natural seq of transformations 1} 0 \rightarrow \Gamma_I(M) \rightarrow M \rightarrow D_I(M)
 \rightarrow H^1_I(M) \rightarrow 0,\end{equation}
natural in $M$, 
and an isomorphism 
\[ H^n_I(M) \cong (R^{n-1}(D_I))(M) \]
for all $n\geq 2$, also natural in $M$.
\end{prop}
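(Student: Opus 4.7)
The plan is to reduce everything to the long exact $\Ext$-sequence induced by the short exact sequence $0 \to I^n \to R \to R/I^n \to 0$, and then pass to the filtered colimit in $n$.

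First I would check that $R^n\Gamma_I$ and $R^{n-1}D_I$ can be computed by the formulas
\[ H^n_I(M) \cong \coprod_{m\in\mathbb{Z}}\colim_{k\rightarrow\infty}\Ext^n_{\grMod(R)}(\Sigma^m R/I^k, M), \]
\[ (R^{n-1}D_I)(M) \cong \coprod_{m\in\mathbb{Z}}\colim_{k\rightarrow\infty}\Ext^{n-1}_{\grMod(R)}(\Sigma^m I^k, M). \]
These identifications follow because filtered colimits and direct sums are exact functors on $\grMod(R)$, so they commute with the formation of right-derived functors; that is, if $J^\bullet$ is an injective resolution of $M$ in $\grMod(R)$, then applying the colimit/sum pointwise to $\hom(\Sigma^m R/I^k, J^\bullet)$ still computes the derived functors of $\Gamma_I$, and similarly for $D_I$.

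Next, for each fixed $k$ and $m$, I would apply the functor $\hom_{\grMod(R)}(\Sigma^m-, M)$ to the short exact sequence
\[ 0 \rightarrow \Sigma^m I^k \rightarrow \Sigma^m R \rightarrow \Sigma^m R/I^k \rightarrow 0 \]
to get a long exact sequence. Since $R$ is projective as a graded $R$-module, $\Ext^i_{\grMod(R)}(\Sigma^m R, M) = 0$ for $i \geq 1$ and $\hom_{\grMod(R)}(\Sigma^m R, M) \cong M_{-m}$; hence the long exact sequence degenerates into a four-term exact sequence
\[ 0 \rightarrow \hom(\Sigma^m R/I^k, M) \rightarrow M_{-m} \rightarrow \hom(\Sigma^m I^k, M) \rightarrow \Ext^1(\Sigma^m R/I^k, M) \rightarrow 0 \]
together with connecting isomorphisms $\Ext^{n-1}(\Sigma^m I^k, M) \cong \Ext^n(\Sigma^m R/I^k, M)$ for all $n \geq 2$.

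Finally, I would take the filtered colimit over $k$ and the direct sum over $m$. Both operations are exact, so exactness of the four-term sequence and of the connecting isomorphisms is preserved, yielding
\[ 0 \rightarrow \Gamma_I(M) \rightarrow M \rightarrow D_I(M) \rightarrow H^1_I(M) \rightarrow 0 \]
and $H^n_I(M) \cong (R^{n-1}D_I)(M)$ for $n\geq 2$, with all maps and isomorphisms natural in $M$ because each step in the construction is natural. The only mildly delicate point is the compatibility of the connecting maps across varying $k$, but this is automatic from the naturality of the long exact $\Ext$-sequence with respect to the evident maps $I^{k+1}\hookrightarrow I^k$ and $R/I^{k+1}\twoheadrightarrow R/I^k$ (really, one uses that these induce a map of short exact sequences to which one applies $\hom(-,M)$). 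I do not expect any serious obstacle; the argument is essentially bookkeeping around the standard fact that $\Gamma_I$ and $D_I$ are the left and right halves, respectively, of the identity functor with respect to the $I$-adic filtration on $R$.
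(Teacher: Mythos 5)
The paper does not actually prove this proposition; it states it as standard and directs the reader to Brodmann--Sharp \cite{MR3014449} for proofs. Your argument is correct and is essentially the standard textbook derivation: identify $H^n_I$ and $R^{n-1}D_I$ as filtered colimits of $\Ext$-groups using exactness of $\colim$ and $\coprod$ (so these operations commute with cohomology of the complex $\hom(-,J^\bullet)$ for an injective resolution $J^\bullet$), then apply $\hom_{\grMod(R)}(\Sigma^m -, M)$ to $0 \to I^k \to R \to R/I^k \to 0$, use projectivity of $\Sigma^m R$ to truncate the long exact sequence, and pass to the colimit. The one point worth stating slightly more carefully is the first step: what you are really using is that filtered colimits and direct sums are exact endofunctors of $\grMod(R)$ and hence commute with taking cohomology of cochain complexes, applied to $\coprod_m\colim_k\hom(\Sigma^m R/I^k, J^\bullet)$, rather than a blanket assertion that they ``commute with the formation of right-derived functors.'' With that phrasing tightened, the proof is complete.
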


\begin{prop}\label{basic properties of local cohomology 2}
Let $R$ be a commutative graded ring, and 
let $r\in R$ be a homogeneous element. Let $(r)$ be the principal
ideal generated by $r$.
\begin{itemize}
\item Then $D_{(r)}(M)$ is naturally isomorphic to $r^{-1}M$, and the natural 
transformation $\id \rightarrow D_{(r)}$ from~\ref{seq of nat transformations}
is naturally isomorphic to the standard localization map $M \rightarrow r^{-1}M$.
\item
Furthermore, $\Gamma_{(r)}(M)$ is naturally isomorphic to the
sub-$R$-module of $M$ consisting of the $r$-power-torsion elements,
and the natural transformation $\Gamma_{(r)}\rightarrow \id$
from~\ref{seq of nat transformations} is naturally isomorphic to
the inclusion of that submodule.
\item
Finally, $H^n_{(r)}(M) \cong 0$ for all $n\geq 2$.
\end{itemize}
\end{prop}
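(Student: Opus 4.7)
The plan is to verify each of the three bullets by direct computation from the definitions, together with Proposition~\ref{basic properties of local cohomology 1}.

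For the first bullet, a grading-preserving homomorphism $\Sigma^m R/(r^n) \to M$ is determined by the image of $1$, which must be a degree-$m$ element $y$ of $M$ satisfying $r^n y = 0$, so $\hom_{\grMod(R)}(\Sigma^m R/(r^n), M)$ identifies with $\{y \in M_m : r^n y = 0\}$. Under this identification, the transitions induced by the surjections $R/(r^{n+1}) \twoheadrightarrow R/(r^n)$ are the inclusions of these submodules, so the colimit over $n$ recovers the $r$-power-torsion submodule of $M$, and the natural transformation $\Gamma_{(r)} \to \id$ becomes the evident inclusion.

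For the second bullet, I would use the surjection $R \twoheadrightarrow (r^n)$, $a \mapsto ar^n$, whose kernel is $\mathrm{Ann}_R(r^n)$, to identify $\hom_{\grMod(R)}(\Sigma^m (r^n), M)$ with $\{y \in M_{m + n\lvert r\rvert} : \mathrm{Ann}_R(r^n)\cdot y = 0\}$ via $\phi \mapsto \phi(r^n)$; one then checks that the restriction maps $\hom((r^n),M)\to\hom((r^{n+1}),M)$ correspond to multiplication by $r$. I would define a natural comparison map $\alpha: D_{(r)}(M) \to r^{-1}M$ by $[\phi]\mapsto \phi(r^n)/r^n$, which is compatible with the map $M\to D_{(r)}(M)$ of~\ref{seq of nat transformations} and the standard localization $M\to r^{-1}M$. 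Injectivity of $\alpha$ is easy: if $\phi(r^n)/r^n = 0$ in $r^{-1}M$, then $r^k \phi(r^n) = 0$ for some $k$, so the restriction of $\phi$ to $(r^{n+k})$ is zero, hence $[\phi] = 0$.

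Surjectivity of $\alpha$ is the main obstacle. The quick path is first to handle the case where $M$ is $r$-torsion-free: for any $a \in R$ with $ar^n = 0$ and any $y \in M$, one has $r^n(ay) = (ar^n)y = 0$, and $r$-torsion-freeness of $M$ then forces $ay = 0$; so $\hom((r^n), M) = M$, and $D_{(r)}(M)$ is literally $\colim(M \xrightarrow{r} M \xrightarrow{r} \cdots) = r^{-1}M$ by inspection. For general $M$, I would reduce to the torsion-free case via the short exact sequence $0 \to \Gamma_{(r)}(M) \to M \to \overline{M} \to 0$: both $D_{(r)}$ and $r^{-1}(-)$ annihilate $\Gamma_{(r)}(M)$ (every element of an $r$-power-torsion module is eventually killed by a transition map, and likewise by localization), so the behavior of $\alpha$ on $M$ is essentially that of $\alpha$ on $\overline{M}$. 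The final lifting step -- comparing representatives in $M$ with their images in $\overline{M}$ -- is the subtlest point and, failing the short route just sketched, is most cleanly handled by the stable \v{C}ech-complex computation from~\cite{MR3014449}.

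For the third bullet, Proposition~\ref{basic properties of local cohomology 1} yields $H^n_{(r)}(M) \cong R^{n-1}(D_{(r)})(M)$ for all $n \geq 2$; having identified $D_{(r)}$ with the exact functor $r^{-1}(-)$, its higher right-derived functors all vanish, so $H^n_{(r)}(M) = 0$ for every $n \geq 2$.
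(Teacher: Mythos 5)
The paper gives no proof of this proposition; it refers the reader to Brodmann and Sharp's textbook. Taking your argument on its own merits: your identification of $\Gamma_{(r)}(M)$ with the $r$-power-torsion submodule, your definition of $\alpha$ and proof of its injectivity, your handling of the $r$-torsion-free case, and your derivation of $H^n_{(r)}(M)=0$ for $n\geq 2$ from exactness of $r^{-1}(-)$ are all correct. (You have also swapped the labels on the first two bullets relative to the statement, but that is cosmetic.)

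The gap you flag at surjectivity of $\alpha$ is real and, at the stated level of generality, unfixable: the isomorphism $D_{(r)}(M)\cong r^{-1}M$ actually fails for general commutative graded rings. Take $R=\mathbb{Z}[x_1,x_2,\dots]/\bigl(\{x_ix_j\}_{i,j\geq 1},\{p^ix_i\}_{i\geq 1}\bigr)$ concentrated in degree zero, $r=p$, $M=R$. Then $\hom_R\bigl((p^n),R\bigr)\cong p^n\mathbb{Z}\oplus\bigoplus_{i\geq 1}\mathbb{Z}/p^i$ (the constant term of any $z$ with $\mathrm{Ann}(p^n)z=0$ must be killed by $x_n\in\mathrm{Ann}(p^n)$, hence lie in $p^n\mathbb{Z}$); the transition maps are multiplication by $p$; so $D_{(p)}(R)\cong\mathbb{Z}$, whereas $p^{-1}R\cong\mathbb{Z}[1/p]$, and $\alpha$ is the proper inclusion $\mathbb{Z}\hookrightarrow\mathbb{Z}[1/p]$. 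Your attempted reduction to the torsion-free case breaks down precisely here: $D_{(r)}$ is only left exact, so the injection $D_{(r)}(M)\hookrightarrow D_{(r)}(M/\Gamma_{(r)}M)$ need not be onto, and a given map $(r^n)\to M/\Gamma_{(r)}M$ need not lift to $M$ even after restricting to $(r^m)$ for $m$ large.

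What the statement actually needs---and what Brodmann--Sharp have implicitly, since they work over Noetherian rings---is that the ascending chain $\mathrm{Ann}_R(r)\subseteq\mathrm{Ann}_R(r^2)\subseteq\cdots$ stabilizes, say at step $N$. This is automatic if $R$ is Noetherian, and trivially so if $R$ is a domain and $r\neq 0$; the latter covers every use in this paper, since $BP_*$ is a domain. Granting that, surjectivity of $\alpha$ is a one-line replacement for your reduction: given $y/r^k\in r^{-1}M$, set $\phi\colon(r^{k+N})\to M$, $\phi(ar^{k+N})=ar^Ny$; this is well-defined because $ar^{k+N}=0$ forces $a\in\mathrm{Ann}(r^{k+N})=\mathrm{Ann}(r^N)$, hence $ar^Ny=0$; and $\alpha([\phi])=r^Ny/r^{k+N}=y/r^k$. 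With that hypothesis stated and that step substituted, the rest of your write-up goes through.
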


\bibliography{/home/asalch/texmf/tex/salch}{}
\bibliographystyle{plain}
\end{document}